\date{}
\numberwithin{equation}{section}
\newtheorem{thm}{Theorem}[section]
\newtheorem{prop}[thm]{Proposition}
\tikzset{node distance=5cm, auto}
\newtheorem{lemma}[thm]{Lemma}
\newtheorem{defn}[thm]{Definition}
\newtheorem{cor}[thm]{Corollary}
\newtheorem{remark}[thm]{Remark}
\newtheorem{question}[thm]{Question}
\newcommand{\etale}{\'etal\@ifstar{\'e}{e\xspace}}
\DeclareSymbolFont{cyrletters}{OT2}{wncyr}{m}{n}
\DeclareMathSymbol{\Sha}{\mathalpha}{cyrletters}{"58}
\begin{document}

\title{Artin-Schreier-Witt lifts  of purely inseparable extensions}
\author{Srinivasan Srimathy}
\email{srimathy@math.tifr.res.in}
\address{School of Mathematics\\
Tata Institute of Fundamental Research\\
Mumbai\\
India}

%
\subjclass[2020]{Primary 11S15, 14G17, 12F15, 12J20}
\keywords{Artin-Schreier-Witt extensions, imperfect fields, discrete valued fields, ramification, lifting problem, Witt vectors.}

\begin{abstract}
Given a discrete valued field $K$ of positive characteristic, we  study the cyclic lifting problem  of purely inseparable extensions of the residue field.   We prove  that unlike the mixed characteristic case, cyclic lifts of any finite purely inseparable modular extension   exist and show how to explicitly construct them. Moreover,  given such a residual extension, we prove the existence of Artin-Schreier-Witt lifts of any finite degree.  More generally, we show that not only we can construct an  arbitrary large  degree Artin-Schreier-Witt lifts, but we can also construct the extension in such a way that the  intermediate Artin-Schreier extensions are of any ramification type that is admissible by the residue. To show this we define the notion of \emph{gene} and explicitly show how to construct $\mathcal{G}$-weaves and $\mathcal{G}$-cyclic extensions where $\mathcal{G}$ is an arbitrary gene over $K$.  As a consequence, this yields an affirmative answer to a question in \cite{ramification_survey} as well as implies that  there is no cap on the wild ramification index unlike the mixed characteristic case.  We also show  some interesting applications  such as constructing cyclic lifts of fields that are isomorphic to the tensor product of a purely inseparable modular extension  and an Artin-Schreier-Witt extension. Finally, we prove a  structure  theorem  of Artin-Schreier-Witt extensions over a discrete valued fields  which restricts the ramification type of intermediate field extensions complementing the above results. 
\end{abstract}

\maketitle


\section{Introduction}\label{sec:intro}
Let $K$ be  discrete valued field with residue $k$  of characteristic $p$ that is not necessarily perfect. For now,  we do not impose any restriction on the characteristic of $K$. Classically, we know that any finite degree separable extension $l/k$    can be lifted to an (unramified) extension $L/K$ of the same degree. Moreover, this extension is unique if $K$ is complete.  Now suppose $l/k$ is a finite purely inseparable extension.  One can certainly lift this to  $L/K$  that is inseparable.  One can also easily construct a separable $L/K$ whose residue is $l/k$ as follows.  Let $[l:k] = p^n$. We know that  there exists a sequence of fields $k = l_0 \subset l_1 \subset \cdots \subset l_{n-1} \subset l_n = l$ such that $l_{i+1} = l_i(\alpha_i^{1/p})$, for some $\alpha_i \in l_i$ (\cite[\href{https://stacks.math.columbia.edu/tag/09HD}{Tag 09HD}, Lemma 9.14.5]{stacks-project}). Let $t$ be a uniformizer of $K$. Then the extension $L_{i+1}$ generated over $L_i$ by $X^p - tX - \tilde{\alpha_i}$ (where $\tilde{\alpha_i}$ is a lift of $\alpha_i$ in $L_i$) is separable with residue $l_{i+1}$. So we get  a  tower $K = L_0 \subset L_1 \subset \cdots \subset L_{n-1} \subset L_n = L$ of separable  extensions yielding a separable $L/K$  whose residue is $l/k$.  \\
\indent Note however that  the above construction need not yield a Galois $L/K$ (the problem being that tower of normal extensions need not be normal). So we may ask if there exists a discrete valued Galois  extension whose residue is $l/k$. A more interesting and difficult question would be to ask whether there exist discrete valued cyclic extensions whose residue is $l/k$.    The answer differs drastically depending on the characteristic of $K$. In the mixed characteristic case, i.e., $char~K =0$, by the results of  Hyodo and Miki (\cite[Proposition (0-4)]{hyodo}, \cite{miki}), given any cyclic extension of $K$,  the degree of inseparability of the residue extension is bounded by a number depending on the absolute ramification index of $K$. In particular, there need not exist cyclic lifts of $l/k$ if the degree $[l:k]$ is arbitrary. It might be interesting to also note that by a result of  Kurihara, there is also a cap on the  wild ramification index if $K$ is of certain type (\cite[Theorem 3.1]{kurihara}). \\
\indent  The goal of this paper is to study the above cyclic lifting problem in the equicharacteristic case. Recall from \cite{sweedler} that  $l/k$ is said to be \emph{modular} if $l$ is isomorphic to the tensor product of primitively generated (a.k.a simple) extensions of $k$. We prove the following as a special case of a more general theorem (Theorem \ref{thm:main}).
\begin{thm} \label{thm:mainintro}
    Let $K$ be a discrete valued field of characteristic $p$ with residue $k$. Suppose $l/k$  is a finite purely inseparable modular extension. Then there exist a cyclic extension  $L/K$  of degree $[l:k]$ with residue $l/k$ and one can explicitly construct it. 
\end{thm}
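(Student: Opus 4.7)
\emph{Plan.} Since $\mathrm{char}\,K=p$, Artin-Schreier-Witt theory tells us that every cyclic extension of $K$ of degree $p^N$ has the form $L=K(\wp^{-1}\underline f)$ for some Witt vector $\underline f\in W_N(K)$, where $\wp=F-\mathrm{id}$. My strategy is therefore to build $\underline f$ explicitly from the modular data of $l/k$, then verify that the resulting extension is cyclic of the full degree $[l:k]=p^N$ and has residue exactly $l/k$.

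\emph{Construction.} I would first treat the prototype $l=k(\alpha^{1/p})$: take $\underline f=\tilde\alpha/\pi^p$ for a uniformizer $\pi$ of $K$ and a lift $\tilde\alpha\in\mathcal O_K$ of $\alpha$. The equation $\eta^p-\eta=\tilde\alpha/\pi^p$ becomes, after substituting $\eta=\theta/\pi$, the separable integral equation $\theta^p-\pi^{p-1}\theta-\tilde\alpha=0$, whose reduction mod $\pi$ is $\bar\theta^{\,p}=\alpha$. Its $p$ roots are $\theta+\zeta\pi$ for $\zeta\in\mathbb F_p$, exhibiting cyclicity, and the residue is $k(\alpha^{1/p})$. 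The key phenomenon is that a pole of $p$-th power order whose leading coefficient is not a $p$-th power in $k$ produces exactly a purely inseparable residue extension, in contrast with the non-Galois separable polynomial $X^p-tX-\tilde\alpha$ used in the paper's introductory construction. For the general modular case $l\cong k(\alpha_1)\otimes_k\cdots\otimes_k k(\alpha_m)$ with $\alpha_i^{p^{e_i}}=a_i\in k$ and $N=e_1+\cdots+e_m$, I would iterate the prototype to obtain, for each $i$, a length-$e_i$ Witt vector $\underline g_i\in W_{e_i}(K)$ whose Artin-Schreier-Witt extension lifts $k(a_i^{1/p^{e_i}})/k$ (with leading coordinate $\tilde a_i/\pi^{p^{e_i}}$ and higher coordinates chosen so that each new Witt level contributes one further $p$-th root). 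The final Witt vector $\underline f\in W_N(K)$ is then assembled from the $\underline g_i$'s using the Verschiebung shifts $V^{e_1+\cdots+e_{i-1}}$, placing the $m$ simple pieces at disjoint depths of the Witt tower so that their contributions to the residue do not interact.

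\emph{Main obstacle.} The delicate step is to show $[L:K]=p^N$ exactly and that the residue of $L$ is precisely $l$, with no proper subfield and no unwanted separable part. For the degree, one shows $\underline f$ is not $\wp$-equivalent to any Witt vector supported in a shallower Verschiebung layer. For the residue, one takes the filtration of $L$ by the subextensions arising from truncations of $\underline f$ and inductively identifies the $j$-th residue with the subfield $k(\alpha_1,\ldots,\alpha_j)$ of $l$. Both steps depend on modularity: it is exactly modularity that supplies the $p$-independence of $\{a_i\}$ needed to prevent the Witt coordinates from collapsing modulo $\wp$ or modulo $k^p$. This combinatorial $p$-independence is the content of the $\mathcal G$-weaves and $\mathcal G$-cyclic formalism underlying the more general Theorem~\ref{thm:main}; the present theorem follows by specializing that formalism to the tensor-product datum coming from $l/k$.
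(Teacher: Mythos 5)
Your degree-$p$ prototype is correct and is essentially the paper's own first step (a pole of $p$-power order whose unit part has residue outside $k^p$ gives a ferocious Artin--Schreier extension, cf.\ Lemma \ref{lem:valram}), and your Witt-vector framing does give cyclicity of all intermediate layers over $K$ for free. But the heart of the theorem is exactly the part you leave unargued: that the higher Witt coordinates of each $\underline{g}_i$ can be ``chosen so that each new Witt level contributes one further $p$-th root.'' No such choice is exhibited, and the behaviour is genuinely sensitive to it. For instance, with $p=2$ and leading coordinate $\tilde a/\pi^{2}$, the second Witt component of $\wp(\underline{x})=(\tilde a/\pi^{2},0)$ is $x_2^2-x_2=(\tilde a/\pi^{2})x_1$; since the first layer is ferocious, $v(x_1)=-1$, so the right-hand side has valuation $-3$, which is already optimal and prime to $p$, and the second layer is \emph{wildly} ramified rather than ferocious --- the residue is $k(\bar a^{1/2})$, not $k(\bar a^{1/4})$. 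With your stated leading coordinate $\tilde a/\pi^{p^{e_i}}$ and zero higher coordinates the two-layer case happens to work, but checking this already requires computing the Witt carry term, verifying optimality, and showing the unit residue $\bar a^{3/2}$ is not a square in $k(\bar a^{1/2})$; at the third layer the dominant carry terms have unit residues that \emph{are} $p$-th powers in the current residue field, so one must pass to better representatives modulo $\wp$ and re-examine subdominant terms before the ramification type can be read off. None of this analysis appears in your sketch, and it is precisely what the paper's machinery performs: the normal form $z_i^p-\gamma_{1i}z_i=z_{\mathfrak{a}(i)}+\gamma_{2i}$ of a $\mathcal{G}$-weave, the twists by $p^N$-th powers of Albert elements, the large exponents $l_i$, and the change-of-variable Lemma \ref{lem:simplify2} in Theorem \ref{thm:gweave_exists}.

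Two further problems. The assertion that the Verschiebung-shifted pieces $V^{e_1+\cdots+e_{i-1}}\underline{g}_i$ ``do not interact'' in the residue is unsubstantiated: Witt addition has carry terms, so beyond depth $e_1$ the defining Artin--Schreier equations mix the coordinates of the different $\underline{g}_i$, and ruling out a change of ramification type or a collapse of the residue requires exactly the same valuation-and-residue bookkeeping as above (modularity, i.e.\ $p$-independence of the $\bar a_i$, should be what saves you, but that is an argument to be made, not a remark). Finally, your closing sentence derives the needed control ``by specializing the $\mathcal{G}$-weave formalism underlying Theorem \ref{thm:main}'' --- that is, by invoking the very result (in its general form) that is being proved; as a standalone blind proof the proposal is therefore circular at precisely the decisive step.
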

    As explained above, this is quite contrasting to the mixed characteristic case.  In fact, we prove that given a purely inseparable modular $l/k$, for any $n$ such that $p^n\geq [l:k]$, there exists a  cyclic extension  $L/K$  of degree $p^n$ (a.k.a \emph{Artin-Schreier-Witt extensions}), with residue $l/k$. Existence of such lifts follows from a  more general construction built using the notion of a \emph{gene} (Definition \ref{defn:gene}). We will briefly provide a sketch of ideas involved in the proof without any technical details. Roughly speaking, a gene, denoted by $\mathcal{G}$ is word consisting of a finite sequence of letters each of taking values  from  a  subset of the set  $\{W, F_a, a\in \mathcal{O}_K\}$. Here $W$ stands for \emph{wild} and $F_a$ stands for \emph{ferocious of type $a$} (i.e., the corresponding residue extension is obtained by adjoining  $\overline{a}^{1/p^r}$ for some $r$).   Given a gene $\mathcal{G} = g_1g_2\cdots g_n$ of length $n$, the goal is to construct a \emph{$\mathcal{G}$-cyclic} extension $L/K$. By definition, it is a cyclic extension of degree $p^n$ such that the unique cyclic subextension $L_i/K$ of degree $p^i$ has the property that $L_i/L_{i-1}$ is of the type $g_i$ (Definition \ref{defn:gcyclic}). This is achieved using a result of Albert (Theorem \ref{thm:albert}) and by inductively constructing a cyclic extension  generated by specific generators that  satisfy an equation of a particular form dictated by $\mathcal{G}$. We call the resulting extension as a \emph{$\mathcal{G}$-weave} (Definition \ref{defn:gweave}).  We then prove that $\mathcal{G}$-weaves are $\mathcal{G}$-cyclic yielding the main theorem (Theorem \ref{thm:main}).\\
\indent Naively speaking, the essence of the above narrative is that  not only we can construct an  arbitrary large  degree Artin-Schreier-Witt lift  of  $l/k$, but we can also construct the extension in such a way that the tower of  intermediate Artin-Schreier extensions are of any  "ramification type that is admissible by $l/k$". More precisely,  we explicitly construct a $\mathcal{G}$-cyclic lift where $\mathcal{G}$ is any \emph{$l/k$-admissible gene} (Definition \ref{defn:admissible}, Corollary \ref{cor:main}).  This, in particular gives an affirmative answer to Question 11.1.3 in \cite{ramification_survey} (Corollary \ref{cor:survey}) and also implies that there is no cap on the wild ramification index unlike the mixed characteristic case (Corollary \ref{cor:wild}).    However, we show that  this lift is not unique even if $[L:K]=[l:k]$ and $K$ is complete unlike the case of separable residue extensions. In fact, we show that, given any finite purely inseparable modular $l/k$  and any $l/k$-admissible gene $\mathcal{G}$ of length $[l:k]$ over $K$, one can construct infinitely many non-isomorphic $\mathcal{G}$-cyclic extensions (\S\ref{sec:unique}).  Next in \S \ref{sec:tower}, we  prove  that given any gene $\mathcal{G}$, one can construct   $\mathcal{G}$-cyclic extensions over any given unramified or  split (i.e., the prime of base field splits completely into distinct primes; see Definition \ref{defn:split}) Artin-Schreier-Witt extensions such that the resulting tower is cyclic (Theorem \ref{thm:splitunrammain}). As a corollary we show the existence of cyclic lifts of tensor product of Artin-Schreier-Witt and purely inseparable modular extensions (Corollary \ref{cor:unrammain}) of the residue field. Finally, we prove a  structure  theorem  of Artin-Schreier-Witt extensions over a discrete valued fields in Appendix \ref{app} which restricts the ramification type of intermediate field extensions complementing the above results.

\section{Notations and Preliminaries}\label{sec:notn}
For a discrete valued field $K$, the valuation ring is denoted by $\mathcal{O}_K$, the maximal ideal by $\mathfrak{m}_K$ and the residue field by the corresponding lower case alphabet $k$.   The value group of $K$ is denoted by $\Gamma_K$.  For an element $a \in \mathcal{O}_K$, the symbol $\overline{a} \in k$ denotes its residue. The symbol $\mathcal{P}$ denotes the Artin-Schreier operator, $\mathcal{P}(x) = x^p-x$. The field with $p$ elements is denoted by $\mathbb{F}_p$. The symbol $\mathbb{N}$ denotes the set of natural numbers, i.e., the positive integers   and symbol $\mathbb{N}_{\leq m}$ denotes the set of  natural numbers lesser than or equal to $m$. The supremum of a set $S$ of numbers is denoted by $\sup S$. \\
\indent  Cyclic extensions of degree  $p$, over fields of characteristic $p$  are called as \emph{Artin-Schreier extensions}.  Generalizing this,  cyclic extensions of degree $p^n, n\geq 1$ over fields of characteristic $p$ are called as \emph{Artin-Schreier-Witt extensions} in the literature (\cite{lara}). \\
\indent For a discrete valued cyclic extension $L/K$  of degree $p^n$,   $L_i/K$ denotes  the unique cyclic subextension of degree $p^i$  over $K$ and $l_i/k$ denotes the corresponding residue field extension (Definition  \ref{defn:cyclicsub}).

\begin{defn}\normalfont
   Let $L/K$ be finite Galois extension of discrete valued fields of degree $p^n$.  Let $[l:k]_{sep}$ and  $[l:k]_{insep}$ denote respectively the separable and the inseparable degree of the residual extension $l/k$.  Then $L/K$ is said to be 
    \begin{itemize}
        \item \textit{wild and totally ramified}, if  $[\Gamma_L: \Gamma_K] = p^n$
        \item \textit{ferociously ramified} if   $[l:k]_{insep} = p^n$   \item \textit{completely ramified}, if $[\Gamma_L: \Gamma_K] = p^r$, $[l:k]_{insep} = p^s$ and $r+s = n$
        \item \textit{unramified} if $[l:k]_{sep} = p^n$
    \end{itemize}
\end{defn}
\begin{remark} \label{rmk:dvr} \normalfont
    Let $L/K$ be  Galois extension that is either completely ramified or unramified and let $B$ be the integral closure of $\mathcal{O}_K$ in $L$. Then there is a unique prime ideal in $B$ above $\mathfrak{m}_K$. Therefore $B$ is a discrete valuation ring.
\end{remark}

\begin{defn} \normalfont\label{defn:split}
    Let $A$ be a Dedekind domain  with  field of fractions $F$. We say that a finite field  extension $L/F$ is \emph{split} if  every prime ideal  in $A$ splits  into $[L:F]$ distinct primes in  the integral closure of $A$ in $L$. 
\end{defn}

\section{Gene and $\mathcal{G}$-cyclic extensions}\label{sec:gene}
For the rest of the paper, $K$ denotes a discrete valued field of characteristic $p \neq 0$ with residue $k$. Adapting \cite[\S2.2]{ramification_survey}, we make the following definition.
\begin{defn}\normalfont\label{defn:gene}
Let $\mathfrak{f} \subset k$ denote a $p$-independent set over $k^p$ (See \cite[Chapter 2, \S2.7]{fried_fa} to recall the notion of $p$-independence).  Let $\mathfrak{F}$ be a subset of $\mathcal{O}_K$ containing lifts of elements in $\mathfrak{f}$ satisfying the following properties:
\begin{enumerate}[(i)]
    \item For every $a \in \mathfrak{F}$, $\overline{a} \in \mathfrak{f}$
    \item If $a,b \in \mathfrak{F}$ are distinct elements, then $\overline{a} \neq \overline{b}$
\end{enumerate}

A \emph{gene} over $K$ with alphabet set $\mathfrak{F}$, denoted by $\mathcal{G}_\mathfrak{F}$ is a word of finite length $g_1g_2\cdots g_n$  where for every $i$, the letter $g_i$ takes values from from a set 
    \begin{align}\label{eqn:phi}
        \Phi_\mathfrak{F} = \{ W, F_{a} | a \in \mathfrak{F} \}
    \end{align}
\end{defn}
\noindent  
For the sake of notational convenience, we ignore the subscript and denote the gene simply by $\mathcal{G}$. We also write "a gene over $K$" to mean a gene over $K$ with some fixed underlying alphabet set $\mathfrak{F}$ satisfying (i) and (ii) above.

Here  the \textit{length} of $\mathcal{G}$ is defined to be $n$, the number of letters in the word. For a gene $\mathcal{G}$,   let $\mathcal{G}_i$ denote the truncated word $g_1g_2\cdots g_i$.
\begin{remark}\normalfont
In the above definition $W$ stands for "Wild" and $F_{a}$ stands for "Ferocious of type $a$". Since the elements of $\mathfrak{f}$ are $p$-independent over $k^p$, the collection of fields  $\{k[x]/(x^{p^{r_i}}-\overline{a_i}) | a_i \in \mathfrak{F}\}$ are linearly disjoint  over $k$ for any $\{r_i\}$ and therefore 
\begin{align*}
   l := \bigotimes_{a_i \in \mathfrak{F}} k[x]/(x^{p^{r_i}}-\overline{a_i})
\end{align*}
is a field. Hence, $l/k$ is a purely inseparable modular extension.  
\end{remark}

\begin{defn}\normalfont
    Let  $L/L'$ be a degree $p$  extension of cyclic discrete valued fields and let $K\subseteq L'$ be a subfield. Then $L/L'$ is said to be type $W$ if it is wildly ramified i.e., $\Gamma_L = \frac{1}{p} \Gamma_{L'}$ and  of type $F_a, a \in \mathcal{O}_K$, if $L/L'$ is ferociously ramified with  the residual extension $l \simeq l'(\overline{a}^{1/p^i})$ for some $i>0$.
\end{defn}

\begin{defn}\normalfont \label{defn:cyclicsub} \normalfont
For a discrete valued cyclic extension $L/K$  of degree $p^n$,   $L_i/K$ denotes  the unique cyclic subextension of degree $p^i$  over $K$ and $l_i/k$ denotes the corresponding residue field extension. 
\end{defn}

\begin{defn}\normalfont\label{defn:gcyclic}
Let $\mathcal{G}$ be a gene of length $n$ over $K$. A cyclic extension $L/K$ of degree $p^n$  is said to be \emph{$\mathcal{G}$-cyclic} $L_i/L_{i-1}$ is of type $g_i$ where $g_i$ is the $i$-th letter of $\mathcal{G}$.
\end{defn}
\begin{defn} \normalfont\label{defn:admissible}
    Given a purely inseparable modular extension $l/k$, we say that a gene $\mathcal{G}$ over $K$ with alphabet set $\mathfrak{F}$ is \emph{$l/k$-admissible} if 
   \begin{align*}
   l \simeq \bigotimes_{a \in \mathfrak{F}} k[x]/(x^{p^{l_a}}-\overline{a})
\end{align*}
    \end{defn}
where $l_a$ is the number of occurrences of the letter $F_a$  in the word $\mathcal{G}$.
    \begin{remark} \label{rmk:adm}\normalfont 
        If $\mathcal{G}$ is $l/k$-admissible, then the residual extension corresponding to a $\mathcal{G}$-cyclic extension is $l/k$.
    \end{remark}
    
\section{Main Results}
We keep the notations of \S\ref{sec:gene}. The main theorem of this paper is the following:
\begin{thm}\label{thm:main}
    Given an arbitrary gene $\mathcal{G}$ over $K$, there exists a $\mathcal{G}$-cyclic extension and one can explicitly construct it.
\end{thm}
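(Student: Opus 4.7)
The plan is induction on the length $n$ of the gene $\mathcal{G} = g_1 g_2 \cdots g_n$, constructing the required extension as a $\mathcal{G}$-weave---an explicit tower $K = L_0 \subset L_1 \subset \cdots \subset L_n$ generated by elements $x_1, \ldots, x_n$ whose defining equations have a shape dictated letter-by-letter by $\mathcal{G}$.

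For the base case $n=1$, fix a uniformizer $t$ of $K$. When $g_1 = W$, take $L_1 = K(x_1)$ with $x_1^p - x_1 = 1/t$; since $v_K(1/t) = -1$ is coprime to $p$, a standard Artin-Schreier valuation computation shows $L_1/K$ is wildly ramified. When $g_1 = F_a$ for $a \in \mathfrak{F}$, take $L_1 = K(x_1)$ with $x_1^p - t^{p-1} x_1 = a$; the defining polynomial is integral, and reducing modulo the maximal ideal forces $\overline{x}_1^p = \overline{a}$, giving residue $k(\overline{a}^{1/p})/k$. Since $\overline{a} \in \mathfrak{f}$ is $p$-independent over $k^p$, this is a genuine degree-$p$ extension, hence ferociously ramified of type $F_a$.

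For the inductive step, assume a $\mathcal{G}_{n-1}$-cyclic weave $L_{n-1}/K$ has already been produced. Since $L_{n-1}/K$ is cyclic of degree $p^{n-1}$, Albert's theorem (Theorem \ref{thm:albert}) supplies an element $\alpha \in L_{n-1}$ such that adjoining a root of $Y^p - Y = \alpha$ yields a cyclic extension of $K$ of degree $p^n$. Because elements of $L_{n-1}$ lying in the same coset modulo $\mathcal{P}(L_{n-1})$ generate the same extension, we are free to modify $\alpha$ by any element of $\mathcal{P}(L_{n-1})$. The heart of the argument is to exploit this freedom so that the resulting generator $x_n$ satisfies the canonical shape prescribed by $g_n$: a ``wild'' equation $x_n^p - x_n = \beta_W$ with $v_{L_{n-1}}(\beta_W)$ a negative integer coprime to $p$, if $g_n = W$; or, if $g_n = F_a$, an equation of ``ferocious of type $a$'' shape $x_n^p - \pi^{p-1} x_n = \beta_{F_a}$ with $\overline{\beta_{F_a}} = \overline{a}$, where $\pi$ is a uniformizer of $L_{n-1}$. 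Setting $L_n := L_{n-1}(x_n)$ then completes the weave, and the prescribed shape makes it immediate that $L_n/L_{n-1}$ is of type $g_n$.

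The main obstacle is the modification step: one must show that every Artin-Schreier class over $L_{n-1}$ contains a representative of the required valuation and residue. This is a valuation-theoretic bookkeeping argument that uses the explicit form of the previously built generators $x_1, \ldots, x_{n-1}$ to track how $\mathcal{P}$ acts on the valuation filtration of $L_{n-1}$, and shows that the subtractions available from $\mathcal{P}(L_{n-1})$ suffice to cancel every obstruction to putting $\alpha$ in the prescribed form. The $p$-independence of $\mathfrak{F}$ re-enters here to guarantee that in the $F_a$ case $\overline{a}$ is not already a $p$-th power in the residue of $L_{n-1}$, so the top extension really is ferocious of type $F_a$ rather than trivial on residues. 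Iterating yields the $\mathcal{G}$-weave, which by construction is $\mathcal{G}$-cyclic.
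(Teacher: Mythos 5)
Your overall architecture (induction along the gene, Albert's theorem for cyclicity, a prescribed ``shape'' for each new generator) matches the paper, but the step you yourself call the heart of the argument contains a genuine error. You propose to take the Albert element $\alpha$ of $L_{n-1}/K$ supplied by Theorem \ref{thm:albert} and then, using only the freedom to replace $\alpha$ by $\alpha+\mathcal{P}(\gamma)$ with $\gamma\in L_{n-1}$, to find a representative of the coset $\alpha+\mathcal{P}(L_{n-1})$ with prescribed valuation (for $W$) or residue (for $F_a$). This cannot work: the coset determines the extension $L_{n-1}(x_n)/L_{n-1}$, hence its ramification type, so if the Albert class you start with happens to define an unramified or split degree-$p$ extension of $L_{n-1}$, then \emph{no} representative of that class has the required shape (this is exactly the content of Lemma \ref{lem:valram}: the type is read off from an optimal representative of the class). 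The freedom actually needed, and the one the paper exploits, is to change the class itself while preserving the Albert property: by Lemma \ref{lem:albert}, $\alpha^{p^N}+c$ is again an Albert element, and in Theorem \ref{thm:gweave_exists} the new generator satisfies $x_i^p-x_i=u_{\mathfrak{a}(i)}/t^{p^{l_i}}+\alpha_{i-1}^{p^N}$, where $l_i$ is chosen so large that the added term dominates $\alpha_{i-1}^{p^N}$ in valuation and therefore dictates the ramification type, no matter which Albert element one started from.

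A second, related gap is your prescribed ferocious shape. For a repeated letter $F_a$ you assert that $p$-independence guarantees $\overline{a}$ is not a $p$-th power in the residue field of $L_{n-1}$; but if $F_a$ already occurred earlier in the gene then $\overline{a}^{1/p}\in l_{n-1}$, so $\overline{a}$ \emph{is} a $p$-th power there, and an equation $x_n^p-\pi^{p-1}x_n=\beta$ with $\overline{\beta}=\overline{a}$ does not force $L_n/L_{n-1}$ to be ferocious of type $F_a$: at the $m$-th occurrence of $F_a$ the residue field must grow by $\overline{a}^{1/p^{m}}$ (cf.\ Lemma \ref{lem:gcyc}), which your right-hand side, having residue in $k$, cannot produce. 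This is precisely why the paper's recursion feeds the previously constructed quantity $u_{\mathfrak{a}(i)}$ (whose normalized form $z_{\mathfrak{a}(i)}$ has residue $\overline{a}^{1/p^{f_a(i)-1}}$, or deeper value group in the wild case) into the right-hand side, and then needs the bookkeeping of Lemmas \ref{lem:simple} and \ref{lem:simplify2} to return the equation to the normal form $z_i^p-\gamma_{1i}z_i=z_{\mathfrak{a}(i)}+\gamma_{2i}$ of a $\mathcal{G}$-weave. Your sketch has no mechanism producing these deeper roots or finer value groups, so the induction as proposed does not go through.
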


\begin{proof}
    This follows from Theorem \ref{thm:gweave_gcyclic} and Theorem \ref{thm:gweave_exists}. 
\end{proof}
The above theorem together with Remark \ref{rmk:adm} yields:
\begin{cor} \label{cor:main}
    Given a purely inseparable modular extension $l/k$, there exists a $\mathcal{G}$-cyclic extension for any $l/k$-admissible gene $\mathcal{G}$. In particular, Theorem \ref{thm:mainintro} holds.
\end{cor}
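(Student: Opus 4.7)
The plan is to deduce this corollary more or less formally from Theorem \ref{thm:main} and Remark \ref{rmk:adm}; the only genuine content is in unpacking the definitions and, for Theorem \ref{thm:mainintro}, exhibiting a particular $l/k$-admissible gene whose length matches $\log_p [l:k]$.

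For the first assertion, I would fix an $l/k$-admissible gene $\mathcal{G}$ over $K$. Theorem \ref{thm:main} immediately produces a $\mathcal{G}$-cyclic extension $L/K$, and Remark \ref{rmk:adm} tells us that its residue extension is $l/k$. So (1) requires nothing beyond invoking what has already been established.

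The remaining task is to recover Theorem \ref{thm:mainintro}, where we need a cyclic lift of degree exactly $[l:k]$. Since $l/k$ is finite purely inseparable modular, the structure theorem of Sweedler gives an isomorphism
\begin{align*}
l \simeq \bigotimes_{i=1}^{s} k[x]/(x^{p^{r_i}} - \overline{a_i})
\end{align*}
for some integers $r_i \geq 1$ and elements $\overline{a_1}, \ldots, \overline{a_s} \in k$ which are $p$-independent over $k^p$. I would choose lifts $a_i \in \mathcal{O}_K$ with $\overline{a_i}$ as residue; since the residues are pairwise distinct, the set $\mathfrak{F} = \{a_1,\ldots,a_s\}$ satisfies conditions (i) and (ii) of Definition \ref{defn:gene}. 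I would then define a gene $\mathcal{G}$ over $K$ with alphabet set $\mathfrak{F}$ of length $n := r_1 + \cdots + r_s$ in which the letter $F_{a_i}$ appears exactly $r_i$ times and the letter $W$ does not appear (the internal ordering is immaterial).

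By construction $\mathcal{G}$ is $l/k$-admissible, and $p^n = [l:k]$. Applying the first part of the corollary produces a $\mathcal{G}$-cyclic extension $L/K$, which is by definition cyclic of degree $p^n = [l:k]$ with residue $l/k$; this yields Theorem \ref{thm:mainintro}. There is no real obstacle here: the work has already been done in Theorem \ref{thm:main}, and what remains is purely bookkeeping once one has the Sweedler decomposition of $l/k$ and lifts the exponents to the letters of a gene.
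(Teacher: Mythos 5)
Your proof is correct and follows the paper's route exactly: the paper deduces the corollary in one line from Theorem \ref{thm:main} together with Remark \ref{rmk:adm}, leaving implicit the bookkeeping you spell out (choosing lifts of the $p$-independent generators from Sweedler's decomposition and forming an $l/k$-admissible gene of length $\log_p[l:k]$ with only ferocious letters). Your explicit construction of that gene is just the unpacking the paper takes for granted, so there is nothing to correct.
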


In \cite{ramification_survey}, the authors define the notion of \emph{genome} of a cyclic extension $L/K$ of degree $p^n$ as a word $T_1T_2\cdots T_n$ where 
\begin{align*}
    T_i = \begin{cases}
        W \text{~if~} L_i/L_{i-1} \text{~is~wild}\\
        F\text{~if~} L_i/L_{i-1} \text{~is~ferocious}
    \end{cases}
    \end{align*}
and ask the following question:
\begin{question}(\cite[Question 11.1.3]{ramification_survey})\label{question:genome}
    Given a complete discrete valued field $K$ and a word $T=T_1T_2\cdots T_n$ taking values in the alphabet $\{W,F\}$, does there exist a cyclic extension $L/K$ with genome $T$?
\end{question}

As a direct corollary of Theorem \ref{thm:main}, we get:
\begin{cor} \label{cor:survey}
  The answer to  Question \ref{question:genome} is affirmative even without the assumption that $K$ is complete.
\end{cor}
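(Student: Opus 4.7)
The plan is to invoke Theorem \ref{thm:main} by translating the word $T=T_1T_2\cdots T_n$ into a suitable gene $\mathcal{G}$ over $K$ whose resulting $\mathcal{G}$-cyclic extension has genome exactly $T$. The letter $W$ already appears in the alphabet $\Phi_{\mathfrak{F}}$, so wherever $T_i=W$ I simply set $g_i=W$. The only task is to supply a legal $\mathfrak{F}$ so that the $F$-letters in $T$ can be realized by letters $F_a$.

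If $T$ contains no $F$'s, I take $\mathfrak{F}=\emptyset$ and $\mathcal{G}=W W\cdots W$, and Theorem \ref{thm:main} gives a $\mathcal{G}$-cyclic extension whose genome is $T$. If $T$ contains at least one $F$, then (as any ferocious extension forces $k$ imperfect) there exists $\overline{a}\in k\setminus k^p$, which is by itself $p$-independent over $k^p$; lift it to any $a\in\mathcal{O}_K$ and set $\mathfrak{F}=\{a\}$. I then form the gene $\mathcal{G}=g_1g_2\cdots g_n$ where $g_i=W$ if $T_i=W$ and $g_i=F_a$ if $T_i=F$. This $\mathcal{G}$ is a gene over $K$ in the sense of Definition \ref{defn:gene}, since conditions (i) and (ii) are trivially satisfied by a one-element $\mathfrak{F}$.

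Applying Theorem \ref{thm:main} to this $\mathcal{G}$ yields a $\mathcal{G}$-cyclic extension $L/K$. By Definition \ref{defn:gcyclic}, the layer $L_i/L_{i-1}$ has type $g_i$: it is wildly ramified precisely when $T_i=W$, and it is of type $F_a$ (a special case of ferocious ramification) precisely when $T_i=F$. Hence the genome of $L/K$, in the sense of \cite{ramification_survey}, is exactly $T_1T_2\cdots T_n$.

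Since nowhere in the argument is completeness of $K$ used—Theorem \ref{thm:main} itself is stated for an arbitrary discrete valued field of characteristic $p$—the construction answers Question \ref{question:genome} affirmatively in this broader setting. There is essentially no hard step: the only thing to verify is that a single $p$-independent element in $k$ over $k^p$ suffices to realize arbitrarily many $F$-letters, which is immediate from the fact that the gene formalism allows the same $a\in\mathfrak{F}$ to be reused in multiple letters $g_i=F_a$.
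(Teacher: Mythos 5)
Your proposal is correct and follows essentially the same route as the paper, which derives Corollary \ref{cor:survey} directly from Theorem \ref{thm:main} by encoding the word $T$ as a gene (your observation that a single alphabet element $a$ with $\overline{a}\notin k^p$ can be reused for every $F$-letter is exactly the intended translation, and completeness is indeed never needed). The only caveat, implicit in the paper as well, is that realizing any $F$-letter presupposes $k$ imperfect, which is the tacit hypothesis under which Question \ref{question:genome} is posed.
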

 Theorem \ref{thm:main} also implies that there is no cap on the wild ramification index unlike the mixed characteristic case (\cite[Theorem 3.1]{kurihara}).
\begin{cor}\label{cor:wild}
    There exists totally (wildly) ramified Artin-Schreier-Witt   extensions of arbitrary finite degree over $K$.
\end{cor}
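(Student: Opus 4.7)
The plan is to deduce this corollary as a direct special case of Theorem~\ref{thm:main} by choosing the right gene. For any positive integer $n$, I would form the gene $\mathcal{G}_n := \underbrace{WW\cdots W}_{n}$ of length $n$, with alphabet set $\mathfrak{F} = \emptyset$; this is a legitimate gene in the sense of Definition~\ref{defn:gene} since conditions (i) and (ii) on $\mathfrak{F}$ are vacuous when $\mathfrak{F}$ is empty, and every letter of $\mathcal{G}_n$ then lies in $\Phi_\mathfrak{F} = \{W\}$ from (\ref{eqn:phi}).

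Next, I would apply Theorem~\ref{thm:main} to $\mathcal{G}_n$ to produce a $\mathcal{G}_n$-cyclic extension $L/K$. Unpacking Definition~\ref{defn:gcyclic}, this yields a cyclic extension of degree $p^n$ whose unique tower $K = L_0 \subset L_1 \subset \cdots \subset L_n = L$ has the property that each layer $L_i/L_{i-1}$ is of type $W$, meaning wildly ramified with $\Gamma_{L_i} = \tfrac{1}{p}\Gamma_{L_{i-1}}$.

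Finally, multiplying these indices across the tower gives $[\Gamma_L : \Gamma_K] = p^n = [L:K]$, so $L/K$ is wild and totally ramified of degree $p^n$ in the sense of Section~\ref{sec:notn}. Since $n$ is arbitrary, this produces totally wildly ramified Artin--Schreier--Witt extensions of every $p$-power degree. There is essentially no obstacle at this stage: all the substantive work is absorbed into Theorem~\ref{thm:main}, and the only conceptual step is recognizing that the all-$W$ word is precisely the gene one should feed into that theorem in order to force every layer of the cyclic tower to contribute a factor of $p$ to the ramification index.
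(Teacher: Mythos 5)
Your proposal is correct and matches the paper's own proof: the paper likewise takes the gene consisting entirely of the letter $W$ and applies Theorem~\ref{thm:main}, with the total ramification read off from the definition of a $\mathcal{G}$-cyclic extension exactly as you do. Your remark that the empty alphabet set $\mathfrak{F}$ is admissible is a harmless extra justification the paper leaves implicit.
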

\begin{proof}
    This follows by taking the gene $\mathcal{G}$ to be the word with all letters $W$ and applying Theorem \ref{thm:main}.
\end{proof}

We also show that one can construct $\mathcal{G}$-cyclic extensions over unramified or split Artin-Schreier-Witt  extensions such that the resulting tower is cyclic. As an interesting corollary, we show that we can also construct Artin-Schreier-Witt lifts of inseparable extensions that are tensor products of Artin-Schreier-Witt extensions and purely inseparable  modular extensions. The proofs of the following can be found in $\S \ref{sec:tower}$.

\begin{thm}
    Let $K/F$ be a discrete valued finite Artin-Schreier-Witt  extension that is either  unramified or split.  Then given an  arbitrary gene $\mathcal{G}$ over $F$, one can construct a  $\mathcal{G}$-cyclic extension $L/K$  such that $L/F$ is cyclic .
\end{thm}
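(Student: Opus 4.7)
The plan is to reduce the construction to an Artin-Schreier-Witt problem over $F$. Let $p^m=[K:F]$; by Artin-Schreier-Witt theory the extension $K/F$ corresponds to a Witt vector $(a_1,\dots,a_m)\in W_m(F)$ modulo $\mathcal P(W_m(F))$, and cyclic extensions of $F$ of degree $p^{m+n}$ containing $K$ correspond bijectively to extensions $(a_1,\dots,a_m,b_1,\dots,b_n)\in W_{m+n}(F)$ of this vector. It therefore suffices to choose $b_1,\dots,b_n\in F$ so that the resulting degree-$p^n$ subextension $L/K$ is $\mathcal{G}$-cyclic; cyclicity of $L/F$ of the required degree is then automatic from the Witt vector presentation.

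The $b_i$ are constructed inductively, mimicking the $\mathcal{G}$-weave recipe from the proof of Theorem \ref{thm:main} but with weave parameters drawn from $\mathcal O_F$ rather than $\mathcal O_K$. At step $i$, suppose $L_{i-1}\supset K$ has been built as a cyclic extension of $F$ of degree $p^{m+i-1}$ with $L_{i-1}/K$ already $\mathcal{G}_{i-1}$-cyclic. Using Albert's criterion (Theorem \ref{thm:albert}) we produce $L_i$ by adjoining a root of an Artin-Schreier equation $\mathcal P(\xi_i)=c_i$, where $c_i$ is the explicit polynomial in the previously chosen generators and in a new parameter $b_i$ prescribed by the $\mathcal{G}$-weave. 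We take $b_i$ to be a power with exponent coprime to $p$ of a fixed uniformizer of $F$ if $g_i=W$, and $b_i=a$ if $g_i=F_a$ with $a\in\mathfrak F\subset\mathcal O_F$. These are precisely the choices made in the proof of Theorem \ref{thm:main}, now restricted to $F$.

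The hypothesis that $K/F$ is unramified or split is used to ensure that these $F$-side choices produce the correct ramification type when viewed over $L_{i-1}$. In the unramified case a uniformizer of $F$ remains a uniformizer of $K$ and of every inductively constructed $L_{i-1}$, and the residue extension $k/f$ is separable, so a $p$-independent set in $f$ over $f^p$ stays $p$-independent in $k$ over $k^p$; in the split case the completion of $K$ at the selected prime is identified with that of $F$, so both value groups and residue fields coincide. In either case the valuation and residue data along the tower $K=L_0\subset L_1\subset\cdots\subset L_n$ match those predicted by the $F$-side construction, and the tower is a $\mathcal{G}$-weave over $K$. The principal obstacle is verifying this transfer carefully at every step — in particular that each new piece $L_i/L_{i-1}$ has exactly the type $g_i$ — and once this is established, $\mathcal{G}$-cyclicity of $L/K$ follows from Theorem \ref{thm:gweave_gcyclic} while cyclicity of $L/F$ is built into the Witt-vector construction.
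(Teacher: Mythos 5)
Your proposal is essentially the paper's own argument: the paper proves this (Theorem \ref{thm:splitunrammain}) by rerunning the weave construction of Theorem \ref{thm:gweave_exists} with the Albert element at each step taken for $L_{i-1}/F$ instead of $L_{i-1}/K$ (Theorem \ref{thm:gweave_tower}), using exactly your observation that a uniformizer and a $p$-independent set transfer from $F$ to $K$ when $K/F$ is unramified or split (Remark \ref{rmk:basechange}), and then concluding via Theorem \ref{thm:gweave_gcyclic}. Your opening Witt-vector parametrization is a dispensable wrapper --- the paper obtains cyclicity of $L/F$ directly from Albert's criterion at each inductive step rather than from lifting Witt components $b_i \in F$ --- but the substance of your induction coincides with the paper's proof.
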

\begin{cor}
    Given any finite  extension $l/k$ such that $l \simeq l_1 \otimes_k l_2$ where $l_1/k$ is Artin-Schreier-Witt  and $l_2/k$ is  purely inseparable modular, there exists  a finite discrete valued Artin-Schreier-Witt  extension $L/K$ with residue $l/k$.
\end{cor}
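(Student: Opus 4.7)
The plan is to decompose the desired extension as a tower $L/K_1/K$, where $K_1/K$ is an unramified Artin--Schreier--Witt lift of $l_1/k$ and $L/K_1$ is built by applying the theorem just above to an appropriately chosen $l_2/k$-admissible gene $\mathcal{G}$ over $K$.

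First I would construct $K_1/K$ by lifting the defining Artin--Schreier--Witt Witt-vector equations of $l_1/k$ coordinate-by-coordinate to $\mathcal{O}_K$: if $l_1/k$ corresponds to solving $\mathcal{P}(\underline{x}) = \underline{\beta}$ in $W_n(k)$, then solving $\mathcal{P}(\underline{x}) = \underline{\tilde{\beta}}$ over $K$ for any Witt-vector lift $\underline{\tilde{\beta}} \in W_n(\mathcal{O}_K)$ of $\underline{\beta}$ produces a cyclic extension $K_1/K$ of degree $[l_1:k]$ whose defining polynomials at each Artin--Schreier step reduce to separable polynomials over the residue, so $K_1/K$ is unramified with residue $l_1/k$. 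Next, by modularity I may write $l_2 \simeq \bigotimes_{a \in \mathfrak{F}} k[x]/(x^{p^{l_a}}-\overline{a})$ with alphabet $\mathfrak{F} \subset \mathcal{O}_K$ whose residues form a $p$-independent subset of $k$ over $k^p$; I take $\mathcal{G}$ to be the gene over $K$ consisting solely of ferocious letters in which each $F_a$ appears exactly $l_a$ times. By construction, $\mathcal{G}$ is $l_2/k$-admissible.

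Applying Theorem \ref{thm:splitunrammain} to the unramified Artin--Schreier--Witt extension $K_1/K$ and the gene $\mathcal{G}$ over $K$, I obtain a $\mathcal{G}$-cyclic extension $L/K_1$ such that $L/K$ is cyclic. It remains to identify the residue. Since $l_1/k$ is separable, the $p$-independence of $\{\overline{a}\}_{a \in \mathfrak{F}}$ over $k^p$ is preserved over $l_1^p$, so the residue of the $\mathcal{G}$-cyclic extension $L/K_1$ is $\bigotimes_{a \in \mathfrak{F}} l_1[x]/(x^{p^{l_a}}-\overline{a}) \simeq l_1 \otimes_k l_2$; composing with the residue $l_1$ of $K_1/K$ yields $l_1 \otimes_k l_2 \simeq l$ as the residue of $L/K$. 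Finally, $[L:K] = [l_1:k]\cdot[l_2:k] = [l:k]$ is a power of $p$ and $L/K$ is cyclic, so $L/K$ is Artin--Schreier--Witt as required.

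The main technical point is checking that the admissibility of $\mathcal{G}$ (defined over $K$ using $p$-independence over $k^p$) continues to give the correct residue structure when we apply Theorem \ref{thm:splitunrammain} to produce an extension of $K_1$; this relies on the separability of $l_1/k$ to preserve $p$-independence under the base change $k \hookrightarrow l_1$. Everything else reduces to assembling the lift of $l_1/k$ and invoking Theorem \ref{thm:splitunrammain}, which does the real work of guaranteeing that the tower $L/K_1/K$ is cyclic.
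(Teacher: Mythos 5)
Your proposal is correct and follows essentially the same route as the paper: take an unramified Artin--Schreier--Witt lift of $l_1/k$, choose an $l_2/k$-admissible gene $\mathcal{G}$ over $K$, and apply Theorem \ref{thm:splitunrammain}, identifying the residue via linear disjointness of $l_1$ and $l_2$ over $k$. The base-change point you flag (separability of $l_1/k$ preserving $p$-independence, so $\mathcal{G}$ remains a gene after passing to the unramified lift) is exactly what the paper handles in Remark \ref{rmk:basechange}; your extra detail on constructing the unramified lift by lifting Witt-vector equations is a fine elaboration of the same argument.
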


\begin{cor} 
    Suppose $K$ admits a  split Artin-Schreier extension. Then given any $m\geq 0$ and an arbitrary gene $\mathcal{G}$ over $K$, one can construct Artin-Schreier-Witt  extension $L/K$ such that $L_m/K$ is split and $L/L_m$ is $\mathcal{G}$-cyclic (w.r.t any chosen discrete valuation on $L_m$). In particular, given a finite purely inseparable modular  $l/k$, an $l/k$-admissible gene $\mathcal{G}$ over $K$ and any $m \geq 0$, there exists  discrete valued Artin-Schreier-Witt  extensions $L/K$ with residue $l/k$ such that $L_m/K$ is split and $L/L_m$ is $\mathcal{G}$-cyclic.
\end{cor}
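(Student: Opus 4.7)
My plan is to construct $L/K$ in two stages: first build a split Artin-Schreier-Witt extension $L_m/K$ of degree $p^m$ from the hypothesis, and then invoke Theorem \ref{thm:splitunrammain} to erect the required $\mathcal{G}$-cyclic tower on top of $L_m$.

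For the first stage, the hypothesis provides (after adjusting the defining Artin-Schreier parameter by an element of $\mathcal{P}(\mathcal{O}_K)$, which preserves the extension) an element $a \in \mathfrak{m}_K$ with $a \notin \mathcal{P}(K)$. I would take $L_m/K$ to be the Artin-Schreier-Witt extension classified by the Witt vector $\vec{a} := (a, 0, \dots, 0) \in W_m(\mathcal{O}_K)$. Since $\vec{a}$ reduces modulo $\mathfrak{m}_K$ to $(0,\dots,0) \in \mathcal{P}(W_m(k))$, every intermediate $L_i/K$ (classified by the truncation of $\vec{a}$ to $W_i$) is split at $\mathfrak{m}_K$. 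To check that $[L_m:K] = p^m$, one must verify that $\vec{a}$ has order exactly $p^m$ in $W_m(K)/\mathcal{P}(W_m(K))$. The multiplication-by-$p$ formula $p = V\circ F$ on Witt vectors over a field of characteristic $p$ gives $p^{i-1}\cdot\vec{a} = (0,\dots,0,a^{p^{i-1}}) \in W_i(K)$; and in the equation $\mathcal{P}(\vec{y}) = (0,\dots,0,a^{p^{i-1}})$, the vanishing of the first $i-1$ components forces $y_0,\dots,y_{i-2} \in \mathbb{F}_p$ (using the polynomial identity that Witt-vector subtraction $A - B$ in position $j$ reduces to $A_j - B_j$ whenever $A_r = B_r$ for all $r < j$), so the equation collapses to $\mathcal{P}(y_{i-1}) = a^{p^{i-1}}$. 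Iterating $a^{p^{j+1}} - a^{p^j} = \mathcal{P}(a^{p^j})$ yields $a^{p^{i-1}} \equiv a \pmod{\mathcal{P}(K)}$, and the hypothesis $a \notin \mathcal{P}(K)$ then rules out any solution. Hence $\vec{a}$ has order $p^m$ and $L_m/K$ is split Artin-Schreier-Witt of degree $p^m$.

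With $L_m/K$ in hand, I apply Theorem \ref{thm:splitunrammain} with base field $F = K$, with $L_m/K$ playing the role of ``$K/F$'' of the theorem (after choosing any prime of $L_m$ to define its discrete valuation), and with the given gene $\mathcal{G}$ over $K$. This yields a $\mathcal{G}$-cyclic extension $L/L_m$ such that $L/K$ is cyclic; a degree count gives $[L:K] = p^{m+n}$ (where $n$ is the length of $\mathcal{G}$), so $L/K$ is Artin-Schreier-Witt, and by uniqueness of cyclic subextensions of a given degree, the $L_m$ built above coincides with the degree-$p^m$ subextension of $L$. For the ``in particular'' assertion, take $\mathcal{G}$ to be $l/k$-admissible; Remark \ref{rmk:adm} then identifies the residue of $L/L_m$ as $l$ over $\mathrm{res}(L_m) = k$ (the latter equality because $L_m/K$ is split), so $L/K$ has residue $l/k$ as required.

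The main obstacle is the degree computation in the first stage, specifically the reduction of $\mathcal{P}(\vec{y}) = (0,\dots,0,a^{p^{i-1}})$ to $\mathcal{P}(y_{i-1}) = a^{p^{i-1}}$ via the vanishing of Witt correction polynomials once the lower components of $\vec{y}$ are pinned down in $\mathbb{F}_p$; once that reduction is available, the argument closes cleanly using only $a \notin \mathcal{P}(K)$.
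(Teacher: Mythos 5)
Your overall architecture is the same as the paper's: the corollary is proved there by combining Lemma \ref{lem:splitASW} (a split Artin--Schreier--Witt extension of degree $p^m$ exists, built from the Witt vector $(a,0,\dots,0)$ with $a\in\mathfrak m_K\setminus\mathcal P(K)$) with Theorem \ref{thm:splitunrammain}. Your second stage --- applying Theorem \ref{thm:splitunrammain} with $F=K$ and $L_m/K$ in the role of ``$K/F$'', counting degrees, using uniqueness of the degree-$p^m$ cyclic subextension, and invoking Remark \ref{rmk:adm} together with $l_m=k$ for the ``in particular'' clause --- is exactly the paper's and is fine. Your computation that $(a,0,\dots,0)$ has order $p^m$ in $W_m(K)/\mathcal P(W_m(K))$ (via $p=V\circ F$ and forcing $y_0,\dots,y_{i-2}\in\mathbb F_p$) is correct, and is in fact more detailed than the paper, which simply cites Lang.

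The gap is the one-sentence treatment of splitness: ``since $\vec a$ reduces modulo $\mathfrak m_K$ to $(0,\dots,0)\in\mathcal P(W_m(k))$, every intermediate $L_i/K$ is split.'' That implication is true, but it is precisely the nontrivial content of Lemma \ref{lem:splitASW}, not a formal consequence of the residue of the Witt vector being trivial. Splitness of $\mathfrak m_K$ requires lifting the residual solution through the whole tower (equivalently, showing $\vec a\in\mathcal P(W_m(\widehat K))$, or that the special fibre of the integral closure is a product of $p^m$ copies of $k$), and to run Hensel's lemma component by component one must know that the successive Artin--Schreier right-hand sides $x_i^p-x_i=h_i$ in the tower have positive valuation at every prime above $\mathfrak m_K$. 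This is exactly what the paper's Claim inside Lemma \ref{lem:splitASW} establishes ($s_i=x_i+h_i$ with $h_i\in a\,\mathbb Z[a,x_1,\dots,x_{i-1}]$), after which Lemma \ref{lem:valram} is applied at each step and one uses that a tower of split extensions is split; only the rank-one case of the principle you are invoking (Lemma \ref{lem:valram}(iv)) is available off the shelf in the paper. So you need to supply that computation, or an equivalent argument (e.g.\ \'etaleness of $\mathcal O_K[x_1,\dots,x_m]$ over $\mathcal O_K$ plus counting $k$-points of the fibre), rather than assert the criterion. A smaller slip: the existence of a defining element $a\in\mathfrak m_K\setminus\mathcal P(K)$ for the given split Artin--Schreier extension is not obtained by adjusting by $\mathcal P(\mathcal O_K)$ (the optimal representative may require subtracting $\mathcal P(\gamma)$ with $v(\gamma)<0$); it is the content of Lemma \ref{lem:valram}(iv) applied to an optimal representative.
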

All of the above results are derived from Theorem \ref{thm:main}.  To prove  Theorem \ref{thm:main},  we first define \emph{$\mathcal{G}$-weaves} and  show that they are $\mathcal{G}$-cyclic. Next, we show how to explicitly construct $\mathcal{G}$-weaves for any given gene $\mathcal{G}$.  We do this in the subsequent sections.

\section{$\mathcal{G}$-weaves}

\begin{defn} \label{defn:fns} \normalfont
   \begin{enumerate}
       
\item Let $\mathcal{G} =g_1g_2\cdots g_n$ be a gene of length $n$ with alphabet set $\mathfrak{F}$.  Recall the definition of $\Phi_{\mathfrak{F}}$ from (\ref{eqn:phi}). For every $1\leq i \leq n$,  we define the set
    \begin{align*}
        \mathfrak{A}_{\mathcal{G}}(i) = \{ j| 1 \leq j < i \text{~and~} g_j = g_i\}
    \end{align*}
and the map
    \begin{align*}
        \mathfrak{a}_{\mathcal{G}}: \mathbb{N}_{\leq n} \rightarrow \mathbb{N}_{\leq n-1} \cup \Phi_\mathfrak{F}
    \end{align*}
    
    \begin{align*}
    \mathfrak{a}_{\mathcal{G}}(i)  = \begin{cases}
    \sup \mathfrak{A}_{\mathcal{G}}(i)\text{~~if~~} \mathfrak{A}_{\mathcal{G}}(i) \neq \emptyset\\
 g_i \text{~~else}
    \end{cases}
   \end{align*}
  When the gene under consideration is clear, we skip the subscript and just write  $\mathfrak{A}(i)$ and $ \mathfrak{a}(i)$.   The symbol $\mathfrak{a}^j(i)$ denotes the function $\mathfrak{a}$ applied  $j$ times (if it is defined).  By convention, $\mathfrak{a}^0(i) = i$. 
 \item  For every $F_a, W \in \Phi_\mathfrak{F}$, we define the functions $f_a, w$ associated to $\mathcal{G}$ (as before we ignore attaching $\mathcal{G}$ in the notation for convenience) as follows:
\begin{align*}
    f_a, w: \mathbb{N}_{\leq n} &\rightarrow \mathbb{Z}\\
    f_a(i) &=  |\{j \leq i | g_j = F_a\}|\\
    w(i)&= |\{j \leq i | g_j = W\}|
\end{align*}
\end{enumerate}
\end{defn}
\begin{remark}\normalfont
    Intuitively, $\mathfrak{a}(i)$ denotes the antecedent of $i$, i.e., the previous index $j$ such that $g_j = g_i$. The function takes $\mathfrak{a}(i)$ the value $g_i$ exactly when  none of the  previous letters take the value $g_i$ i.e., $g_i$ occurs for the "first time"  in the word $\mathcal{G}$.  Similarly, $f_a(i)$ and $w(i)$ denote the number of times the letters $F_a$ and $W$ appear in $\mathcal{G}_i$ respectively.
\end{remark}
\begin{remark} \normalfont \label{rem:fa}
    With notations in Definition \ref{defn:fns}, suppose $g_i = F_a$ (resp. $g_i = W$) and $f_a(i) \geq 2$ (resp. $w(i) \geq 2)$, then $f_a(\mathfrak{a}(i)) =  f_a(i-1)=f_a(i) -1$ (resp. $w(\mathfrak{a}(i)) = w(i-1)= w(i) -1$).
\end{remark}
 Without loss of generality, let us  assume that  $\Gamma_K = \mathbb{Z}$. 
\begin{lemma}\label{lem:gcyc}
    Let $L/K$ be $\mathcal{G}$-cyclic. Then for any $i$ and $a \in \mathfrak{F}$,   $\overline{a}^{1/p^{f_a(i)+1}} \notin l_i$ and $\Gamma_{L_i} = (1/p^{w(i)})\mathbb{Z}$
        \end{lemma}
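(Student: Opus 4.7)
The plan is to induct on $i$, using the step-by-step structure $K = L_0 \subset L_1 \subset \cdots \subset L_n = L$ of a $\mathcal{G}$-cyclic extension, where $L_i/L_{i-1}$ is of type $g_i$. The base case $i = 0$ is immediate: $L_0 = K$ gives $l_0 = k$ and $\Gamma_{L_0} = \mathbb{Z}$, while $\overline{a}^{1/p} \notin k$ is exactly the $p$-independence of $\mathfrak{f}$ over $k^p$ built into the definition of a gene.

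For the inductive step I would first dispose of the value-group claim, which is pure bookkeeping: the definitions of wild and ferocious ramification guarantee that a $W$ step multiplies the index by $p$ while keeping the residue field fixed (matching $w(i) = w(i-1) + 1$ and $f_a(i) = f_a(i-1)$), whereas an $F_b$ step leaves the value group unchanged while bumping the inseparable residue degree by $p$ (matching $w(i) = w(i-1)$ and $f_b(i) = f_b(i-1)+1$). The residue claim is trivial in the $W$ case. In an $F_b$ step, the definition of ``type $F_b$'' only says $l_i = l_{i-1}(\overline{b}^{1/p^s})$ for \emph{some} $s \geq 1$, and since $[l_i:l_{i-1}] = p$, this $s$ must be the minimal exponent with $\overline{b}^{1/p^s} \notin l_{i-1}$ (any larger $s$ would force $[l_i:l_{i-1}] \geq p^2$). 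The inductive hypothesis pins this minimum to $f_b(i-1) + 1 = f_b(i)$, so in fact $l_i = l_{i-1}(\overline{b}^{1/p^{f_b(i)}})$.

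The main obstacle is ensuring that, after the $F_b$ step, the nonexistence statement $\overline{a}^{1/p^{f_a(i)+1}} \notin l_i$ persists for every $a \in \mathfrak{F}$, not just $a = b$. The cleanest route is to strengthen the induction and prove simultaneously the concrete description
\begin{equation*}
l_i \;\cong\; \bigotimes_{a \in \mathfrak{F}} k\bigl(\overline{a}^{1/p^{f_a(i)}}\bigr),
\end{equation*}
which is a field of degree $p^{\sum_a f_a(i)}$ by $p$-independence of $\mathfrak{f}$ over $k^p$. In the $F_b$ step this enhanced hypothesis follows from the inductive description of $l_{i-1}$ together with a dimension count, since adjoining $\overline{b}^{1/p^{f_b(i)}}$ to $l_{i-1}$ produces an extension of degree exactly $p$, which must therefore coincide with $l_i$. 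From this explicit tensor-product form, $p$-independence of $\mathfrak{f}$ immediately yields $\overline{a}^{1/p^{f_a(i)+1}} \notin l_i$ for every $a \in \mathfrak{F}$, closing the induction.
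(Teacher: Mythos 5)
Your proof is correct. The paper itself offers no argument here --- its proof of this lemma is literally ``This is clear from the definitions'' --- so there is nothing to compare line by line; your writeup supplies the substance that the paper treats as immediate. Your route is the natural one (induction along the tower $K=L_0\subset L_1\subset\cdots\subset L_n$), and the one genuinely non-obvious point is exactly the one you isolate: after an $F_b$ step the statement $\overline{a}^{1/p^{f_a(i)+1}}\notin l_i$ must persist for \emph{all} $a\in\mathfrak{F}$, not just $a=b$, and this does not follow letter-by-letter without some global control of $l_i$. Your strengthened hypothesis $l_i\simeq\bigotimes_{a\in\mathfrak{F}}k\bigl(\overline{a}^{1/p^{f_a(i)}}\bigr)$ handles this cleanly, and the degree counts you need are exactly the linear-disjointness statement the paper records in the remark following the definition of a gene ($p$-independence of $\mathfrak{f}$ over $k^p$ gives linear disjointness of the fields $k[x]/(x^{p^{r}}-\overline{a})$ for arbitrary exponents). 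Two small points you use implicitly and could flag: the fact that a type-$W$ step fixes the residue field and a type-$F_b$ step fixes the value group is the fundamental inequality $ef\le p$ for the degree-$p$ step $L_i/L_{i-1}$; and your identification of the exponent $s$ in $l_i=l_{i-1}(\overline{b}^{1/p^s})$ as the minimal one with $\overline{b}^{1/p^s}\notin l_{i-1}$ is correct precisely because a larger $s$ would make $[l_{i-1}(\overline{b}^{1/p^s}):l_{i-1}]\ge p^2$, contradicting $[l_i:l_{i-1}]=p$. With those remarks made explicit, your argument is a complete and accurate proof of the lemma.
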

\begin{proof}
    This is clear from the definitions.
\end{proof}

\begin{defn}\normalfont\label{defn:gweave}
    Let $\mathcal{G} = g_1g_2\cdots g_n$ be a gene of length $n$ over $K$ and $t$ denote a uniformizer of $\mathcal{O}_K$.  Set 
    \begin{align} \label{eqn:initial}
        z_{g_i} = \begin{cases}
            a \text{~if~} g_i = F_a\\
            \frac{1}{t} \text{~else}
        \end{cases}
    \end{align}
    A \emph{$\mathcal{G}$-weave} is a cyclic extension $L/K$ of degree $p^n$ with a discrete valuation such that for every $1\leq i \leq n$ there exists $z_i \in L_i$ satisfying 
    \begin{align}\label{eqn:gweave}
        z_i^p - \gamma_{1i}  z_i = z_{\mathfrak{a}(i)} + \gamma_{2i}
    \end{align}
    for some $\gamma_{1i}, \gamma_{2i} \in \mathfrak{m}_{L_{i-1}}$.
\end{defn}

Now we show that a $\mathcal{G}$-weave is $\mathcal{G}$-cyclic.  Let $v$ denote the valuation on $L$. We begin with some definitions and lemmas.
\begin{defn}\label{defn:valcompare}\normalfont
 For $f,g \in L$,  we write $f\prec g$ if $v(f)< v(g)$ and  $f\approx g$ if $v(f) = v(g)$.  We write $f\preceq g$ if $f \prec g$ or $f\approx g$. If $S \subseteq L$, we write $f \prec S$ if $f \prec g, \forall g \in S$.
\end{defn}

\begin{remark}\normalfont
    Note that $f \prec 1$ means $v(f)$ is negative and  $f \approx 1$ means $f$ is a unit in $\mathcal{O}_L$. 
\end{remark}

\begin{lemma}\label{lem:gw}
    Let $L/K$ be a $\mathcal{G}$-weave. With notations as in Definition \ref{defn:gweave},  the following hold:
    \begin{enumerate}[(i)]
    \item For every $i$ such that   $g_i = F_a$ for some $a \in \mathfrak{F}$, we have  $z_i \approx 1$ and
    \begin{align*}
        \overline{z_i} = \overline{a}^{1/p^{f_a(i)}}
    \end{align*}
    
    \item For every $i$ such that  $g_i =  W$,  we have $z_i \prec 1$ and 
    \begin{align*}
       v(z_i) \in (1/{p^{w(i)}}\mathbb{Z}) - (1/{p^{w(i)-1}})\mathbb{Z}
    \end{align*}
    \end{enumerate}
\end{lemma}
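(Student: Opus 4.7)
The plan is induction on $i$. The defining relation (\ref{eqn:gweave}) reads $z_i^p - \gamma_{1i}z_i = z_{\mathfrak{a}(i)} + \gamma_{2i}$ with $\gamma_{1i}, \gamma_{2i} \in \mathfrak{m}_{L_{i-1}}$, so the two ``error'' terms have strictly positive valuation and the equation is governed by the approximate identity $z_i^p \approx z_{\mathfrak{a}(i)}$. I case-split on whether $\mathfrak{A}_\mathcal{G}(i) = \emptyset$, in which case $\mathfrak{a}(i) = g_i \in \Phi_\mathfrak{F}$ and $z_{\mathfrak{a}(i)}$ is the initial datum from (\ref{eqn:initial}), or $\mathfrak{a}(i)$ is an index $j < i$, in which case I feed in the inductive hypothesis on $z_j$ together with $f_a(j) = f_a(i)-1$ or $w(j) = w(i)-1$ supplied by Remark \ref{rem:fa}.

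For part (i), suppose $g_i = F_a$. First I observe that the right-hand side has valuation $0$: it is either $a + \gamma_{2i}$ with $\overline{a} \in \mathfrak{f}$ a unit, or $z_{\mathfrak{a}(i)} + \gamma_{2i}$ with $z_{\mathfrak{a}(i)} \approx 1$ by induction. A routine three-way check then rules out both $v(z_i) > 0$ (which would force $v$ of the LHS to be positive) and $v(z_i) < 0$ (which would force $v$ of the LHS to equal $p\,v(z_i) < 0$), leaving $z_i \approx 1$. Reducing modulo $\mathfrak{m}_L$ yields $\overline{z_i}^p = \overline{a}$ when $f_a(i) = 1$, and $\overline{z_i}^p = \overline{a}^{1/p^{f_a(i)-1}}$ when $f_a(i) \geq 2$; in either case $\overline{z_i} = \overline{a}^{1/p^{f_a(i)}}$.

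For part (ii), suppose $g_i = W$; here the work lies in a valuation estimate. The right-hand side has valuation $v(z_{\mathfrak{a}(i)})$ since $v(\gamma_{2i}) > 0$, and this equals $-1$ when $w(i) = 1$ and, by induction and Remark \ref{rem:fa}, lies in $(1/p^{w(i)-1})\mathbb{Z} \smallsetminus (1/p^{w(i)-2})\mathbb{Z}$ when $w(i) \geq 2$; in both sub-cases it is strictly negative. I first rule out $v(z_i) \geq 0$ (else the LHS has nonnegative valuation, contradicting the RHS). Once $v(z_i) < 0$, the inequality $v(\gamma_{1i} z_i) = v(\gamma_{1i}) + v(z_i) > p\,v(z_i) = v(z_i^p)$ is forced because $v(\gamma_{1i}) > 0$ and $(p-1)|v(z_i)| > 0$. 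Hence $v(z_i^p - \gamma_{1i}z_i) = p\,v(z_i)$, which must equal $v(z_{\mathfrak{a}(i)})$; dividing by $p$ yields $v(z_i) \in (1/p^{w(i)})\mathbb{Z} \smallsetminus (1/p^{w(i)-1})\mathbb{Z}$.

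The only genuine obstacle is the valuation estimate in case (ii); once one has committed to $v(z_i) < 0$, the Artin--Schreier-like leading-term analysis is forced by the $p$-to-$1$ gap between $z_i^p$ and $\gamma_{1i}z_i$, and the rest of the proof is bookkeeping with the counting functions $f_a$ and $w$.
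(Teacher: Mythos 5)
Your proof is correct and follows essentially the same route as the paper: induction via the recursion $z_i^p - \gamma_{1i}z_i = z_{\mathfrak{a}(i)} + \gamma_{2i}$ together with Remark \ref{rem:fa}, splitting on whether $\mathfrak{a}(i)$ is a letter or an index, and then reading off residue or valuation from the Artin--Schreier-type leading-term analysis. The only difference is cosmetic: you induct on $i$ where the paper inducts on $f_a(i)$ (resp.\ $w(i)$), and you spell out the valuation case-checks that the paper dismisses as ``clear''.
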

\begin{proof}
(i)Let $g_i = F_a$.  We prove the claim by induction on $f_a(i)$. 
 Let $f_a(i)=1$.   By definition, $\mathfrak{a}(i) =g_i = F_a $ and  $z_{g_i} = a$. Therefore by (\ref{eqn:gweave})
     \begin{align*}
         z_i^p- \gamma_{1i}z_i = a  + \gamma_{2i}
     \end{align*}
 for some $\gamma_{1i}, \gamma_{2i} \in \mathfrak{m}_{L_{i-1}}$. Clearly, the above equation implies $z_i \approx 1$ and $\overline{z_i} = a^{1/p}$. So the claim is true when $f_a(i) =1$. Suppose the claim is true for every $i$ with $f_a(i) < m$, where $m\geq 2$.  Let $f_a(i) = m$. By Remark \ref{rem:fa}, $f_a(\mathfrak{a}(i)) = m-1$. Now the element $z_i$ satisfies 
    \begin{align}\label{eqn:temp3}
         z_i^p - \gamma_{1i} z_i = z_{\mathfrak{a}(i)} + \gamma_{2i}   \end{align}
where $\gamma_{1i}, \gamma_{2i} \in \mathfrak{m}_{L_{i-1}}$. By induction hypothesis, $z_{\mathfrak{a}(i)} \approx 1$ and  
\begin{align*}
        \overline{z_{\mathfrak{a}(i)}} = \overline{a}^{1/p^{m-1}}
    \end{align*}
    Therefore from (\ref{eqn:temp3}), we conclude that $z_i \approx 1$ and taking residues we get
     \begin{align*}
        \overline{z_i} = \overline{a}^{1/p^{f_a(i)}}
    \end{align*}
    which finishes the proof.
    \\

\noindent (ii) Let $g_i = W$. We prove the claim again   by induction on $w(i)$. Suppose $w(i) =1$. 
 By definition, $\mathfrak{a}(i) =g_i =W$ and  $z_{g_i} = 1/t$ and therefore by (\ref{eqn:gweave})
     \begin{align*}
         z_i^p- \gamma_{1i}z_i = \frac{1}{t}  + \gamma_{2i}
     \end{align*}
     where $\gamma_{1i}, \gamma_{2i} \in \mathfrak{m}_{L_{i-1}}$. It is easy to see from the above equation that  $z_i \prec 1$ and 
     \begin{align*}
         v(z_i) =  -1/p \in (1/p)\mathbb{Z} - \mathbb{Z}
     \end{align*}
    So the claim is true when $w(i)=1$.\\

Suppose the claim is true for  every $i$  with $w(i) < m$ where $m\geq 2$.  Let $w(i) = m$. Now $z_i$ satisfies 
     \begin{align*}
         z_i^p - \gamma_{1i} z_i = z_{\mathfrak{a}(i)} + \gamma_{2i}   \end{align*}
    By induction hypothesis, $z_{\mathfrak{a}(i)}  \prec 1$ and hence from the above equation we get $z_i \prec 1$. Therefore
    \begin{align*}
    v(z_i) &= (1/p) v(z_{\mathfrak{a}(i)}) \\
    & \in (1/p)( (1/p^{w(i)-1}) \mathbb{Z} - (1/p^{w(i)-2}) \mathbb{Z}) \hspace{.5cm} \text{(Remark \ref{rem:fa})}\\
    & \in (1/{p^{w(i)}}\mathbb{Z}) - (1/{p^{w(i)-1}})\mathbb{Z}
    \end{align*}
as claimed.   
\end{proof}

\begin{thm} \label{thm:gweave_gcyclic}
    A $\mathcal{G}$-weave is $\mathcal{G}$-cyclic.
\end{thm}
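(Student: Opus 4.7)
The plan is to induct on $i$ from $1$ to $n$, proving at each stage that $L_i/L_{i-1}$ is of type $g_i$. The first observation is that since $L/K$ is cyclic of degree $p^n$, each step $L_i/L_{i-1}$ is a cyclic degree-$p$ extension, hence falls into exactly one of three mutually exclusive types, unramified, wildly ramified, or ferociously ramified, corresponding to the three distributions of $p = e \cdot f_s \cdot f_i$. So it suffices to identify the correct type from the structural data about $z_i$ supplied by Lemma~\ref{lem:gw}.

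The inductive hypothesis ``$L_{i-1}/K$ is $\mathcal{G}_{i-1}$-cyclic'' yields the explicit descriptions
\[
l_{i-1} \;\simeq\; \bigotimes_{b\in\mathfrak{F}} k\bigl(\overline{b}^{\,1/p^{f_b(i-1)}}\bigr), \qquad \Gamma_{L_{i-1}} \;=\; \tfrac{1}{p^{w(i-1)}}\mathbb{Z},
\]
the first being a genuine field by the $p$-independence of $\mathfrak{f}$ over $k^p$. If $g_i = F_a$, Lemma~\ref{lem:gw}(i) gives $\overline{z_i} = \overline{a}^{1/p^{f_a(i)}}$ with $f_a(i) = f_a(i-1)+1$. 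A $p$-independence argument (addressed below) shows this element does not lie in $l_{i-1}$, so $l_{i-1}(\overline{z_i}) \subseteq l_i$ is a proper purely inseparable extension; since $[l_i:l_{i-1}]$ divides $[L_i:L_{i-1}] = p$, it must equal $p$, and we conclude $l_i = l_{i-1}(\overline{a}^{1/p^{f_a(i)}})$, i.e.\ $L_i/L_{i-1}$ is ferociously ramified of type $F_a$. If $g_i = W$, Lemma~\ref{lem:gw}(ii) gives $v(z_i) \in \tfrac{1}{p^{w(i)}}\mathbb{Z} \setminus \tfrac{1}{p^{w(i)-1}}\mathbb{Z}$; since $\Gamma_{L_{i-1}} = \tfrac{1}{p^{w(i-1)}}\mathbb{Z} = \tfrac{1}{p^{w(i)-1}}\mathbb{Z}$, this value lies outside $\Gamma_{L_{i-1}}$, forcing $[\Gamma_{L_i}:\Gamma_{L_{i-1}}]= p$ and hence $\Gamma_{L_i} = \tfrac{1}{p^{w(i)}}\mathbb{Z}$, so $L_i/L_{i-1}$ is totally (wildly) ramified. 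The base case $i=1$ is the same argument, noting that $\mathfrak{a}(1)=g_1$ and that reducing (\ref{eqn:gweave}) modulo $\mathfrak{m}_{L_1}$, respectively computing $v(z_1)$ directly, gives the required conclusion.

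The main obstacle I anticipate is the verification that $\overline{a}^{1/p^{f_a(i)}} \notin l_{i-1}$ in the ferocious case. This is where $p$-independence of $\mathfrak{f}$ over $k^p$ plays its essential role: the linear disjointness that makes $\bigotimes_{b} k(\overline{b}^{1/p^{f_b(i-1)}})$ a field is exactly what prevents $l_{i-1}$ from containing any higher $p$-power root of $\overline{a}$ than the exponent $f_a(i-1)$ already recorded in its tensor-product description. Concretely, one can pass to a sufficiently large purely inseparable extension (for instance the compositum $k(\overline{b}^{1/p^\infty}:b\in\mathfrak{F})$) and invoke the fact that the monomials $\prod_b \overline{b}^{\,e_b/p^m}$ with $0 \le e_b < p^m$ remain linearly independent over $k$ for every $m$. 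Comparing dimensions, adjoining $\overline{a}^{1/p^{f_a(i-1)+1}}$ to $l_{i-1}$ produces a strictly larger subfield, closing the induction and establishing that $L/K$ is $\mathcal{G}$-cyclic.
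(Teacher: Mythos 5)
Your proposal is correct and takes essentially the same route as the paper: induction on $i$, with Lemma \ref{lem:gw} supplying the residue $\overline{a}^{1/p^{f_a(i)}}$ (resp.\ the valuation of $z_i$) and the inductive hypothesis plus $p$-independence (packaged in the paper as Lemma \ref{lem:gcyc}) showing this datum is new over $L_{i-1}$, so that the step is ferocious of type $F_a$ (resp.\ wild). The only differences are cosmetic: you unwind Lemma \ref{lem:gcyc} into an explicit tensor-product description of $l_{i-1}$, and your opening trichotomy of degree-$p$ steps should really be phrased via the fundamental inequality $[\Gamma_{L_i}:\Gamma_{L_{i-1}}]\cdot[l_i:l_{i-1}]\le p$ (a priori split or immediate steps are not excluded), but your actual argument only uses that inequality, so nothing is affected.
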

\begin{proof}
Let $L/K$ be a $\mathcal{G}$-weave. We prove by induction on $i$ that  $L_i/K$ is $\mathcal{G}_i$-cyclic where $\mathcal{G}_i$ is the truncated word of length $i$ (Definition \ref{defn:gene}). The base case is $i=1$. 
Suppose $g_1 = F_a$ for some $a$. In this case, by Lemma \ref{lem:gw} (i), $\overline{z_1} = \overline{a}^{1/p}$ and hence  we conclude that $L_1/K$ is of type $F_a$.
    On the other hand, if $g_1 = W$, by Lemma \ref{lem:gw} (ii),
     $v(z_1) \in  \frac{1}{p}\mathbb{Z} -\mathbb{Z}$ which means that  $L_1/K$ is of type $W$. Either way, we conclude that $L_1/K$ is of type $g_1$ and hence $L_1/K$ is $\mathcal{G}_1$-cyclic.

Now as induction hypothesis, we assume that $L_{i-1}/K$ is $\mathcal{G}_{i-1}$-cyclic where $i\geq 2$.  Consider the extension $L_i/L_{i-1}$. 
As before, we have two possibilities:
\begin{enumerate}[(i)]
 \item   \textit{Suppose $g_i = F_a$ for some $a$:} 
 
    By induction hypothesis, $L_{i-1}/K$ is $\mathcal{G}_{i-1}$-cyclic and since $g_i = F_a$, $ f_a(i-1) = f_a(i)-1 $. Therefore by Lemma \ref{lem:gcyc}, 
    \begin{align*}
   \overline{a}^{1/p^{f_a(i)}} \notin l_{i-1}
\end{align*}
But $z_i \in L_i$ and  by Lemma \ref{lem:gw}(1), $\overline{a}^{1/p^{f_a(i)}}\in l_i$. Therefore $L_i/L_{i-1}$ is of type $g_i = F_a$.

\item \textit{Suppose $g_i =W$:}     By induction hypothesis, $L_{i-1}/K$ is $\mathcal{G}_{i-1}$-cyclic and since $g_i =W$, $w(i-1) = w(i)-1$. Therefore by Lemma  \ref{lem:gcyc}, $\Gamma_{L_{i-1}}=(1/p^{w(i)-1})\mathbb{Z}$. But $z_i \in L_i$ and by Lemma \ref{lem:gw}(2), 
\begin{align*}
       v(z_i) \in (1/{p^{w(i)}}\mathbb{Z}) - (1/{p^{w(i)-1}})\mathbb{Z}
    \end{align*}
Therefore, $L_i/L_{i-1}$ is of type $g_i =W$.
    \end{enumerate}
    \end{proof}
    
\section{Constructing  $\mathcal{G}$-weaves}
Given a gene $\mathcal{G}$, we will now explicitly construct a $\mathcal{G}$-weave. This requires understanding some special class of polynomials as described below.
\begin{defn} \label{defn:mnspecial} \normalfont
Let $L$ be a field of characteristic $p$ and let $m, n$ be non-negative integers. A polynomial $p(X) \in L[X]$ is said to be $\{m,n\}$-special if it is of the form
\begin{align*}
    p(x) = c + \sum_{i=0}^n a_i X^{p^i}
\end{align*}
where $c, a_i \in L^{p^m}$ and $a_i \neq 0$ for some $i$. We denote the set of $\{m,n\}$-special polynomials by $L^{\{m,n\}}[X]$. 
\end{defn}
The following proposition is an easy exercise left to the reader.
\begin{prop}\label{prop:prop}
    \begin{enumerate}[(i)]
        \item  If $m_2\leq m_1$ and $n_1 \leq n_2$, then  $L^{\{m_1,n_1\}}[X] \subseteq L^{\{m_2,n_2\}}[X]$
        \item Let $f \in L^{\{m_1,n_1\}}[X]$ and $g \in L^{\{m_2,n_2\}}[X]$. Then $g \circ f \in L^{\{min(m_1,m_2),n_1 + n_2\}}[X]$
    \end{enumerate}
\end{prop}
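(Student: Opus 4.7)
Both parts reduce to unwinding Definition \ref{defn:mnspecial} and a little bookkeeping with Frobenius powers; there is no real obstacle, in keeping with the author's ``easy exercise'' remark. My plan is to dispatch (i) by direct inclusion and to prove (ii) by explicitly expanding the composition and checking the three defining clauses of $L^{\{\min(m_1,m_2),\,n_1+n_2\}}[X]$.

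For part (i), I would take an arbitrary $p(X) = c + \sum_{i=0}^{n_1} a_i X^{p^i} \in L^{\{m_1,n_1\}}[X]$ and observe two inclusions: the hypothesis $m_2 \leq m_1$ gives $L^{p^{m_1}} \subseteq L^{p^{m_2}}$, so $c$ and each $a_i$ already lie in $L^{p^{m_2}}$; and the hypothesis $n_1 \leq n_2$ lets me rewrite $p(X)$ as a sum up to $i = n_2$ by setting $a_{n_1+1} = \cdots = a_{n_2} = 0$. The condition that some coefficient is nonzero is preserved. For part (ii), I would write $f(X) = c_1 + \sum_{i=0}^{n_1} a_i X^{p^i}$ and $g(X) = c_2 + \sum_{j=0}^{n_2} b_j X^{p^j}$, then use the fact that in characteristic $p$ the map $z \mapsto z^{p^j}$ is an additive ring homomorphism to get $f(X)^{p^j} = c_1^{p^j} + \sum_{i=0}^{n_1} a_i^{p^j} X^{p^{i+j}}$, and hence
\begin{equation*}
g(f(X)) = \Bigl(c_2 + \sum_{j=0}^{n_2} b_j c_1^{p^j}\Bigr) + \sum_{j=0}^{n_2}\sum_{i=0}^{n_1} b_j\, a_i^{p^j}\, X^{p^{i+j}}.
\end{equation*}
From here the exponent bound $0 \leq i+j \leq n_1 + n_2$ is immediate, and every coefficient is a product of an element of $L^{p^{m_2}}$ (namely $c_2$ or $b_j$) with an element of $L^{p^{m_1+j}} \subseteq L^{p^{m_1}}$ (namely $c_1^{p^j}$ or $a_i^{p^j}$), so it lies in $L^{p^{\min(m_1,m_2)}}$, which contains both $L^{p^{m_1}}$ and $L^{p^{m_2}}$.

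The one mildly subtle point — the ``main obstacle'' only in a weak sense — is the nonvanishing condition, since after collecting like powers, distinct pairs $(i,j)$ with the same sum $i+j$ could in principle cancel. I would handle this with a standard leading-term trick: set $i_0 = \max\{i : a_i \neq 0\}$ and $j_0 = \max\{j : b_j \neq 0\}$ (both exist by the nonvanishing hypotheses on $f$ and $g$). Any other pair $(i,j)$ with $i + j = i_0 + j_0$ must satisfy $i > i_0$ or $j > j_0$, forcing $a_i = 0$ or $b_j = 0$; hence the coefficient of $X^{p^{i_0+j_0}}$ in $g(f(X))$ is exactly $b_{j_0} a_{i_0}^{p^{j_0}}$, which is nonzero. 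That closes the last clause and completes the proof.
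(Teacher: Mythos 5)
Your proof is correct, and since the paper explicitly leaves this proposition as an exercise, there is no written proof to diverge from: the direct expansion via Frobenius additivity and the coefficient bookkeeping are exactly the intended routine verification. Your leading-term argument with $i_0 = \max\{i : a_i \neq 0\}$, $j_0 = \max\{j : b_j \neq 0\}$ correctly settles the only non-trivial point (that the nonvanishing clause survives possible cancellation among pairs $(i,j)$ with equal sum), using that $L$ is a field.
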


To construct $\mathcal{G}$-weaves, we will need the following important result of Albert. As mentioned in \S\ref{sec:notn}, $\mathcal{P}$ denotes the Artin-Schreier operator.
\begin{thm}(\cite[Lemma 7]{albert_cyclic}, \cite[Theorem 4.2.3]{jacobson})\label{thm:albert}
Let $F$ be a field of characteristic $p$ and let  $E/F$ be a cyclic extension of degree $p^e, e \geq 1$ with $Gal(E/F) = <\sigma>$. Then $E$ contains an element $\beta$ such that $Tr_{E/F}(\beta) =1$ and if $\beta$ is such an element, then there exists an $\alpha \in E$ such that 
\begin{align*}
   \mathcal{P}(\beta) = \sigma(\alpha) - \alpha.
\end{align*}
 Then $x^p -x - \alpha$ is irreducible in $E[x]$ and if $\gamma$ is a root of this polynomial, then $E' = E[\gamma]$ is a cyclic field of degree $p^{e+1}$ over $F$ and any such extension can be obtained this way.   
\end{thm}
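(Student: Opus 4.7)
The plan is to prove the five assertions in turn, using three standard tools: the non-degeneracy of the trace for separable extensions, the additive form of Hilbert 90, and the Artin-Schreier description of degree-$p$ cyclic extensions in characteristic $p$. First, since $E/F$ is separable, $Tr_{E/F}$ is a nonzero $F$-linear form and hence surjective, so I can pick any $\beta \in E$ with $Tr_{E/F}(\beta) = 1$. For such $\beta$,
\begin{align*}
Tr_{E/F}(\mathcal{P}(\beta)) = Tr_{E/F}(\beta^p) - Tr_{E/F}(\beta) = Tr_{E/F}(\beta)^p - Tr_{E/F}(\beta) = 0,
\end{align*}
using that $p$-th powers are additive in characteristic $p$. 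The additive form of Hilbert 90, which says that $\ker Tr_{E/F}$ equals the image of $\sigma - \mathrm{id}$, then yields $\alpha \in E$ with $\mathcal{P}(\beta) = \sigma(\alpha) - \alpha$.

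Next, to prove $x^p - x - \alpha$ is irreducible over $E$, by the standard Artin-Schreier dichotomy it suffices to check $\alpha \notin \mathcal{P}(E)$. If $\alpha = \mathcal{P}(\delta)$ for some $\delta \in E$, then $\mathcal{P}(\beta) = \sigma(\alpha) - \alpha = \mathcal{P}(\sigma(\delta) - \delta)$ forces $\beta - (\sigma(\delta)-\delta) = c$ for some $c \in \mathbb{F}_p$. Applying $Tr_{E/F}$ and using $Tr_{E/F}(\sigma(\delta)-\delta) = 0$ gives $1 = Tr_{E/F}(\beta) = p^e c = 0$, a contradiction.

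For the Galois structure of $E' = E[\gamma]$, I would extend $\sigma$ to $E'$ by setting $\tau(\gamma) := \gamma + \beta$; this is well-defined as a field automorphism because
\begin{align*}
(\gamma+\beta)^p - (\gamma+\beta) = \alpha + \mathcal{P}(\beta) = \sigma(\alpha),
\end{align*}
so $\tau(\gamma)$ is a root of the conjugate polynomial $x^p - x - \sigma(\alpha)$. An easy induction yields $\tau^i(\gamma) = \gamma + \sum_{j=0}^{i-1} \sigma^j(\beta)$, hence $\tau^{p^e}(\gamma) = \gamma + Tr_{E/F}(\beta) = \gamma + 1 \neq \gamma$ while $\tau^{p^{e+1}}(\gamma) = \gamma + p = \gamma$. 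Thus $\tau$ has order exactly $p^{e+1} = [E':F]$, so $Gal(E'/F) = \langle \tau \rangle$ is cyclic of order $p^{e+1}$.

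Finally, for the converse, let $E''/F$ be any cyclic extension of degree $p^{e+1}$ containing $E$, and pick $\tau$ generating $Gal(E''/F)$ lifting $\sigma$; a short argument in cyclic $p$-groups shows any lift of a generator of $Gal(E/F)$ automatically has order $p^{e+1}$. Write $E'' = E[\gamma]$ with $\gamma^p - \gamma = \alpha$ via Artin-Schreier, and after rescaling $\gamma$ by an element of $\mathbb{F}_p^{\times}$, arrange that $\tau^{p^e}(\gamma) = \gamma + 1$. The key observation is that $\beta := \tau(\gamma) - \gamma$ is fixed by $\tau^{p^e}$ (since $\tau^{p^e}(\gamma) = \gamma+1$ and $\tau^{p^e}|_E = \mathrm{id}$), hence lies in $E$; the same computations as above then give $Tr_{E/F}(\beta) = 1$ and $\mathcal{P}(\beta) = \sigma(\alpha) - \alpha$, realizing $E''$ in the prescribed form. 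I expect the main obstacle to be this converse: one must simultaneously extract the data $(\beta,\alpha)$ satisfying both the trace normalization and the cocycle equation purely from the abstract Galois structure of $E''$, and the normalization $\tau^{p^e}(\gamma)=\gamma+1$ is precisely the hinge that makes the trace condition, the cocycle identity, and the correct order for $\tau$ all line up at once.
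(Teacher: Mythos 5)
Your proof is correct and follows essentially the standard argument behind the result the paper quotes without proof from Albert (Lemma 7) and Jacobson (Theorem 4.2.3): surjectivity of the trace, the additive Hilbert 90, the Artin--Schreier irreducibility criterion, extending $\sigma$ via $\tau(\gamma)=\gamma+\beta$, and for the converse normalizing $\tau^{p^e}(\gamma)=\gamma+1$ and setting $\beta=\tau(\gamma)-\gamma$. Since the paper supplies no proof of its own, there is nothing further to compare.
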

\begin{defn}\normalfont
    We call an element $\alpha \in E$ as in the above theorem,  an \textit{Albert element} of $E/F$. For the sake of notational convenience, if $E/F$ is the trivial extension (i.e., $E=F$), we call any element of $F$  an Albert element of $E/F$. 
\end{defn}

\begin{lemma}\label{lem:albert}
    If $\alpha$ is an Albert element of $E/F$, so is $\alpha^{p^N}+c$ for any $c \in F$ and $N \in \mathbb{Z}$.
\end{lemma}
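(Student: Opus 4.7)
The plan is to exhibit an explicit witness $\beta' \in E$ for the element $\alpha' := \alpha^{p^N}+c$ being Albert, i.e.\ an element satisfying $Tr_{E/F}(\beta')=1$ and $\mathcal{P}(\beta') = \sigma(\alpha')-\alpha'$. The obvious candidate is $\beta' := \beta^{p^N}$; I will treat the case $N \geq 0$ first, since that is all that is needed for positive Frobenius powers, and note at the end how $N<0$ reduces to the same identities whenever the relevant $p$-th roots exist in $E$. The whole proof is driven by two facts from characteristic $p$: the Frobenius $x\mapsto x^p$ is an additive ring endomorphism, and $\sigma$ commutes with it.

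First I would check the Artin--Schreier identity. Since $c\in F$ is fixed by $\sigma$, and using additivity of the $p^N$-th power map,
\[
\sigma(\alpha^{p^N}+c)-(\alpha^{p^N}+c) \;=\; \sigma(\alpha)^{p^N}-\alpha^{p^N} \;=\; (\sigma(\alpha)-\alpha)^{p^N} \;=\; \mathcal{P}(\beta)^{p^N}.
\]
On the other hand, a direct computation gives
\[
\mathcal{P}(\beta^{p^N}) \;=\; \beta^{p^{N+1}}-\beta^{p^N} \;=\; (\beta^p-\beta)^{p^N} \;=\; \mathcal{P}(\beta)^{p^N},
\]
so the two sides agree and $\mathcal{P}(\beta')=\sigma(\alpha')-\alpha'$.

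Next I would verify the trace condition. Using that $\sigma$ commutes with Frobenius, together with the Frobenius of a finite sum being the sum of Frobenii,
\[
Tr_{E/F}(\beta') \;=\; \sum_{i=0}^{p^e-1}\sigma^i(\beta^{p^N}) \;=\; \sum_{i=0}^{p^e-1}\sigma^i(\beta)^{p^N} \;=\; \Bigl(\sum_{i=0}^{p^e-1}\sigma^i(\beta)\Bigr)^{p^N} \;=\; Tr_{E/F}(\beta)^{p^N} \;=\; 1.
\]
Together with the previous paragraph, this shows $\beta'=\beta^{p^N}$ exhibits $\alpha^{p^N}+c$ as an Albert element.

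There is essentially no obstacle here; the lemma is a bookkeeping consequence of the additivity of Frobenius and of $\sigma$ commuting with it. The only point requiring mild care is $N<0$: the formal manipulations above go through whenever $\alpha^{p^N}, \beta^{p^N}\in E$ (so that the claim even makes sense), since one may then relabel $\alpha^{p^N}, \beta^{p^N}$ as the base Albert pair and apply the above argument with $-N$ in place of $N$. In the applications the lemma is invoked with $N\geq 0$, so no subtlety arises.
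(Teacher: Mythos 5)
Your proof is correct and is essentially identical to the paper's: both take $\beta' = \beta^{p^N}$ as the witness, compute $\mathcal{P}(\beta^{p^N}) = \mathcal{P}(\beta)^{p^N} = \sigma(\alpha^{p^N}+c)-(\alpha^{p^N}+c)$, and use $Tr_{E/F}(\beta^{p^N}) = Tr_{E/F}(\beta)^{p^N} = 1$. Your extra remark about the $N<0$ case is a reasonable caveat that the paper leaves implicit.
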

\begin{proof}
    Let $\beta \in E$ such that $Tr_{E/F}(\beta) =1$ and 
    $\mathcal{P}(\beta) = \sigma(\alpha) - \alpha$. We have $Tr_{E/F}(\beta^{p^N}) = Tr_{E/F}(\beta)^{p^N} = 1$ and
    \begin{align*}
        \mathcal{P}(\beta^{p^N}) &=\mathcal{P}(\beta)^{p^N}\\
        &= \sigma(\alpha)^{p^N} - \alpha^{p^N}\\
        &=\sigma(\alpha^{p^N} +c) - (\alpha^{p^N} +c)
    \end{align*}
\end{proof}
\begin{remark} \normalfont 
    If $F$ is a discrete valued field with uniformizer $t$, then  it is easy to check that $1/t \notin \mathcal{P}(F)$. In particular, $F$ admits Artin-Schreier extensions i.e., $F$ is not Artin-Schreier closed.   
\end{remark}
We now show that given any gene $\mathcal{G}$ of finite length, one can explicitly construct a $\mathcal{G}$-weave. The proof involves a few auxiliary lemmas from Appendix \ref{sec:auxi}. As before, $t$ denotes a uniformizer of $\mathcal{O}_K$. Given a natural number $m$, let  $\Sigma(m) = \sum_{i=1}^m i$.  
\begin{thm} \label{thm:gweave_exists}
     Let $\mathcal{G}$ be a gene of length $n$ over $K$ and let $N > \Sigma(n)$. Let $u_{g_i} = z_{g_i}$ where $z_{g_i}$ is given by (\ref{eqn:initial}).   Then one can inductively construct a cyclic extension $L/K$ of degree $p^n$  with  the following properties: For every $1 \leq i \leq n$,
     \begin{enumerate}
      \item   $L_i \simeq L_{i-1}[x_i]$  where $x_i$ satisfies
     \begin{align*}
         x_i^p - x_i = u_i + \alpha_{i-1}^{p^N}
     \end{align*}
     where $u_i = (1/t^{p^{l_i}}) u_{\mathfrak{a}(i)}$ for some $ l_i \geq N$ and $\alpha_{i-1}$ is any Albert element of $L_{i-1}/K$.
     \item there exists $z_i \in L_i$  such that $x_i \in L_{i-1}^{\{N-\Sigma(i), i\}}[z_i]$  and which satisfies 
     \begin{align} \label{eqn:temp7}
         z_i^p - \gamma_{1i} z_i = z_{\mathfrak{a}(i)} + \gamma_{2i}
         \end{align}
         for some $\gamma_{1i}, \gamma_{2i} \in \mathfrak{m}_{L_{i-1}}$.
\end{enumerate}
In particular, $L/K$ is a $\mathcal{G}$-weave.
\end{thm}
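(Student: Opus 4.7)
The plan is to proceed by induction on $i$, simultaneously producing the field $L_i$, a generator $x_i$, a discrete valuation on $L_i$, and the witness element $z_i$, while verifying (1) and (2). The recursive definition $u_i = t^{-p^{l_i}} u_{\mathfrak{a}(i)}$ with seeds $u_{g_i} = z_{g_i} \in K$ yields, by a trivial sub-induction on indices, that every $u_i$ lies in $K$; hence Lemma~\ref{lem:albert} applied with $c = u_i$ shows that $u_i + \alpha_{i-1}^{p^N}$ is an Albert element of $L_{i-1}/K$ for any Albert $\alpha_{i-1}$. Albert's theorem (Theorem~\ref{thm:albert}) then guarantees that $X^p - X - (u_i + \alpha_{i-1}^{p^N})$ is irreducible over $L_{i-1}$ and that $L_i := L_{i-1}[x_i]$ is cyclic of degree $p^i$ over $K$. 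Once (2) is established, $L_i/L_{i-1}$ is completely ramified (wildly or ferociously) by combining \eqref{eqn:temp7} with the proof of Theorem~\ref{thm:gweave_gcyclic}, and Remark~\ref{rmk:dvr} extends the valuation uniquely from $L_{i-1}$ to $L_i$.

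For the construction of $z_i$ the natural ansatz is
\[
z_i \;:=\; t^{p^{l_i - 1}} (x_i + \beta_i), \qquad \beta_i \in L_{i-1},
\]
with $\beta_i = 0$ in the ``first occurrence'' base cases $\mathfrak{a}(i) \in \Phi_{\mathfrak{F}}$. Raising to the $p$-th power in characteristic $p$ and using $t^{p^{l_i}} u_i = u_{\mathfrak{a}(i)}$ gives
\begin{align*}
z_i^p - t^{p^{l_i} - p^{l_i - 1}} z_i \;=\; u_{\mathfrak{a}(i)} + t^{p^{l_i}} \bigl( \beta_i^p - \beta_i + \alpha_{i-1}^{p^N} \bigr).
\end{align*}
In the base case $u_{\mathfrak{a}(i)} = z_{\mathfrak{a}(i)}$ directly from the definitions, so \eqref{eqn:temp7} holds with $\gamma_{1i} = t^{p^{l_i} - p^{l_i - 1}}$ and $\gamma_{2i} = t^{p^{l_i}} \alpha_{i-1}^{p^N}$, both in $\mathfrak{m}_{L_{i-1}}$ since $l_i \geq N \geq 1$. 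In the recursive case $\mathfrak{a}(i) = j < i$, one uses the inductive hypothesis $x_j \in L_{j-1}^{\{N-\Sigma(j), j\}}[z_j]$ together with the already-established equation $z_j^p - \gamma_{1j} z_j = z_{\mathfrak{a}(j)} + \gamma_{2j}$ to rewrite $u_j$ as a controlled polynomial in $z_j$, and then chooses $\beta_i$ (via the auxiliary lemmas of Appendix~\ref{sec:auxi}) so that $u_j - z_j \equiv t^{p^{l_i}}(\beta_i - \beta_i^p) \pmod{\mathfrak{m}_{L_{i-1}}}$; taking $l_i$ sufficiently large within the constraint $l_i \geq N$ absorbs the residual discrepancy into $\gamma_{2i}$.

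The verification that $x_i \in L_{i-1}^{\{N-\Sigma(i), i\}}[z_i]$ proceeds by inverting the ansatz to obtain $x_i = t^{-p^{l_i - 1}} z_i - \beta_i$. The leading coefficient $t^{-p^{l_i - 1}}$ lies in $K^{p^{N-\Sigma(i)}}$ since $l_i - 1 \geq N - 1 \geq N - \Sigma(i)$, while $\beta_i$, built recursively from $z_{j'}$'s with $j' < i$, lies in $L_{i-1}^{p^{N-\Sigma(i)}}$ by repeated application of the inductive hypothesis $x_{j'} \in L_{j'-1}^{\{N-\Sigma(j'), j'\}}[z_{j'}]$ and composition via Proposition~\ref{prop:prop}(ii). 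The cumulative exponent loss $1 + 2 + \cdots + i = \Sigma(i)$ across the first $i$ induction steps is precisely what forces the initial budget $N > \Sigma(n)$.

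The principal obstacle is the construction of $\beta_i$ in the recursive case $\mathfrak{a}(i) = j < i$: it must be chosen simultaneously to yield a clean \eqref{eqn:temp7} with both $\gamma_{\bullet i} \in \mathfrak{m}_{L_{i-1}}$ and to keep $x_i$ inside the advertised special class over $L_{i-1}$. The auxiliary lemmas of Appendix~\ref{sec:auxi} are tailored to perform the exponent arithmetic of Proposition~\ref{prop:prop} under the operations -- addition, $p$-th powers, multiplication by powers of $t$ -- that arise when unwinding $u_j$ in terms of $z_j$. Once these are set, the induction is algorithmic, and the conclusion that $L/K$ is a $\mathcal{G}$-weave is immediate from Definition~\ref{defn:gweave}.
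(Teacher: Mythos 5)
Your outer framework matches the paper: the induction on $i$, the observation that $u_i\in K$ so that Lemma \ref{lem:albert} and Theorem \ref{thm:albert} give cyclicity of $L_i/K$, and the rescaling $z_i=t^{p^{l_i-1}}x_i$ in the first-occurrence case. But the heart of the theorem is the recursive case $\mathfrak{a}(i)=j<i$, and there your ansatz $z_i=t^{p^{l_i-1}}(x_i+\beta_i)$ with $\beta_i\in L_{i-1}$ breaks down. As your own computation shows, the ansatz forces $\gamma_{1i}=t^{p^{l_i}-p^{l_i-1}}$ and reduces (2) to solving $\mathcal{P}(\beta_i)\equiv t^{-p^{l_i}}(z_j-u_j)-\alpha_{i-1}^{p^N}$ up to an error of valuation $>-p^{l_i}$; you assert this via "the auxiliary lemmas of Appendix \ref{sec:auxi}", but those lemmas do nothing of the sort, and the congruence is in general unsolvable. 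Test $\mathcal{G}=F_aF_a$ with $\alpha_0=0$: then $u_1=a/t^{p^{l_1}}=t^{-p^{l_1}}z_1^p-t^{-p^{l_1-1}}z_1$ exactly, so (choosing the exponent $l_2$ large, as you anyway must to absorb $t^{p^{l_2}}\alpha_1^{p^N}$) the right-hand side is, after stripping the exact $p$-th power $-t^{-p^{l_2}-p^{l_1}}z_1^p$ as in Lemma \ref{lem:simple}, dominated below the threshold by the term $t^{-p^{l_2}-p^{l_1-1}}z_1$, sitting at valuation $-(p^{l_2}+p^{l_1-1})<-p^{l_2}$ with leading residue $\overline{a}^{1/p}$ plus an element of $k^p(\overline{a})$. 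The residue field of $L_1$ is $k(\overline{a}^{1/p})$, whose $p$-th powers lie in $k^p(\overline{a})\subseteq k$, and $\overline{a}^{1/p}\notin k$; so this coefficient is not a $p$-th power there (and when the exponent $l_1$ equals $1$ the valuation is even prime to $p$). Any $\mathcal{P}(\beta)$ matching that leading term must do so through $\beta^p$, whose leading coefficient is a $p$-th power, while cancelling it through the linear part $-\beta$ creates a strictly deeper term $\beta^p$ that nothing can cancel; hence $v\bigl(t^{-p^{l_2}}(z_1-u_1)-\alpha_1^{p^N}-\mathcal{P}(\beta)\bigr)\le -(p^{l_2}+p^{l_1-1})$ for every $\beta$, even in the completion $l_1((t))$. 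So no $z_i$ of your restricted shape exists. This is precisely why the paper does not fix the rescaling to be a power of $t$ but divides by $h^{1/p}$ with $h$ produced by Lemma \ref{lem:simplify2}, and performs the correction not by an unstructured constant $\beta_i$ but by a $p$-polynomial $g(z_j)$ whose coefficients are high $p$-th powers, so the needed $p$-th roots exist -- that is the whole point of the $\{m,n\}$-special bookkeeping and the budget $N>\Sigma(n)$.

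Two further gaps, independent of the above. First, even if your congruence were solvable, you would still need $\beta_i\in L_{i-1}^{p^{N-\Sigma(i)}}$ for $x_i=t^{-p^{l_i-1}}z_i-\beta_i$ to lie in $L_{i-1}^{\{N-\Sigma(i),i\}}[z_i]$; a correction "built recursively from the $z_{j'}$" is in general not such a $p$-power, because the $z_{j'}$ are not $p$-th powers in $L_{i-1}$. The paper avoids this by re-expressing $g(z_{\mathfrak{a}(i)})$ as a polynomial in $z_i$ via \eqref{eqn:temp7} and the composition law (Proposition \ref{prop:prop}(ii)), rather than treating it as a constant. Second, in your base case the claim that $t^{p^{l_i}}\alpha_{i-1}^{p^N}\in\mathfrak{m}_{L_{i-1}}$ "since $l_i\ge N\ge 1$" is not correct: the Albert element $\alpha_{i-1}$ may have arbitrarily negative valuation, so one must choose $l_i$ large enough that $1/t^{p^{l_i}}\prec\alpha_{i-1}^{p^N}$, as the paper does; this one is easily repaired, but the recursive step needs the paper's different mechanism, not a patch.
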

\begin{proof}
    We prove by induction on $i$. Let $i=1$. In this case, $\mathfrak{a}(1) = g_1$ and  $x_1$ satisfies
    \begin{align*}
        x_1^p - x_1 = \frac{1}{t^{p^{l_1}}}z_{\mathfrak{a}(1)} + \alpha_0^{p^{N}}
    \end{align*}
   We choose $l_1\geq N$ such that $1/t^{p^{l_1}} \prec \alpha_0^{p^N}$. It is clear that $z_1 = t^{p^{l_1-1}}x_1$ satisfies (\ref{eqn:temp7}) for $i=1$ and  moreover we have $x_1 = t^{-p^{l_1-1}}z_1 \in  K^{\{N-1, 1\}}[z_1]$.  Assume that we have constructed  $L_k, \forall k\leq i-1$ satisfying the theorem. Let $\alpha_{i-1}$ be an arbitrary Albert element in $L_{i-1}/K$. Let $L_i \simeq L_{i-1}[x_i]$  where
    \begin{align*}
         x_i^p - x_i = u_i + \alpha_{i-1}^{p^N}
     \end{align*}
     where $u_i =\frac{1}{t^{p^{l_i}}}u_{\mathfrak{a}(i)}$. There are  two possibilities:
     \begin{enumerate}
         \item  Suppose $\mathfrak{a}(i) = g_i$: Then, 
           $u_{g_i} = z_{g_i} = z_{\mathfrak{a}(i)}$ so that 
               \begin{align*}
         x_i^p - x_i = \frac{1}{t^{p^{l_i}}} z_{\mathfrak{a}(i)}+ \alpha_{i-1}^{p^N}
           \end{align*}
           We choose $l_i>N$ such that $\frac{1}{t^{p^{l_i}}} \prec  \alpha_{i-1}^{p^N}$. Now it is easy to see that $z_i = t^{p^{l_i-1}}x_i$   satisfies (\ref{eqn:temp7}) and the claim. 
      \item Suppose $\mathfrak{a}(i) \neq g_i$: In this case,  $\mathfrak{a}(i) = m \in \mathbb{Z}$ for some $m<i$. Now the field $L_i/L_{i-1}$ is generated by 
      \begin{align*}
          x_i^p - x_i &= \frac{1}{t^{p^{l_i}}}u_{\mathfrak{a}(i)} + \alpha_{i-1}^{p^N} \nonumber\\
          &= \frac{1}{t^{p^{l_i}}}(x_{\mathfrak{a}(i)}^p - x_{\mathfrak{a}(i)}- \alpha_{\mathfrak{a}(i)-1}^{p^N} )+\alpha_{i-1}^{p^N} 
        \end{align*}
By the induction hypothesis, $x_{\mathfrak{a}(i)} \in L_{\mathfrak{a}(i)-1}^{\{N-\Sigma(\mathfrak{a}(i)), \mathfrak{a}(i)\}}[z_\mathfrak{a}(i)] \subseteq L_{i-1}^{\{N-\Sigma(i-1), i-1\}}[z_\mathfrak{a}(i)]$ (Proposition \ref{prop:prop} (i)). Now we may choose large enough $l_i$ and  invoke Lemma \ref{lem:simplify2} which yields  $g \in L_{i-1}^{\{N-\Sigma(i), i-1\}}[z_\mathfrak{a}(i)]$ such that  $y_i$ defined by 

$$y_i := x_i - g(z_\mathfrak{a}(i))$$
satisfies
        \begin{align}\label{eqn:temp9}
            y_i^p -y_i = hz_{\mathfrak{a}(i)} + \eta
        \end{align}
   for some $h \in L_{i-1}^{p^{N-\Sigma(i) +1}}, \eta \in L_{i-1}^{p^{N-\Sigma(i)}}$. Moreover, by the same lemma since  $l_i$ is large enough  $h \prec \{1, \eta\}$. Let $z_i = y_i/h^{1/p}$. Then  (\ref{eqn:temp9}) reduces to
    \begin{align}\label{eqn:temp10}
        z_i^p - \gamma_{1i} z_i = z_{\mathfrak{a}(i)} + \gamma_{2i}
    \end{align}
       for some $\gamma_{1i},\gamma_{2i} \in  L_{i-1}^{p^{N-\Sigma(i)}}\cap \mathfrak{m}_{L_{i-1}}$. Clearly from (\ref{eqn:temp10}), we have $z_{\mathfrak{a}(i)} \in L_{i-1}^{\{N-\Sigma(i),1\}}[z_i]$. We have 
       \begin{align} \label{eqn:temp11}
       x_i = y_i + g(z_\mathfrak{a}(i)) = h^{1/p}z_i + g(z_\mathfrak{a}(i)) 
    \end{align}
       Note that $g \circ z_\mathfrak{a}(i) \in L_{i-1}^{\{N-\Sigma(i),i\}}[z_i]$ by the composition law (Proposition \ref{prop:prop} (ii)). Moreover by (\ref{eqn:temp10}), it is easy to see that the degree of $g \circ z_\mathfrak{a}(i)$  as a polynomial in $z_i$ is at least $p$. Therefore from (\ref{eqn:temp11}), we get $x \in L_{i-1}^{\{N-\Sigma(i),i\}}[z_i]$ as required.
       \end{enumerate}
       It is clear from the  above construction that $L/K$ is cyclic (Theorem \ref{thm:albert}, Lemma \ref{lem:albert}). This together with (\ref{eqn:temp7}) concudes that $L/K$ is a $\mathcal{G}$-weave.
    \end{proof}     
We conclude this section by  pointing out again that  the main theorem (Theorem \ref{thm:main}) follows from Theorem \ref{thm:gweave_gcyclic} and Theorem \ref{thm:gweave_exists}.

\section{Non-uniqueness}\label{sec:unique}
Unlike the case of separable residue field extensions, the cyclic field $L/K$ constructed in Theorem \ref{thm:gweave_exists}, is not unique if $l/k$ is purely inseparable even when $K$ is complete and $[L:K]= [l:k]$. Assume that $K$ is complete discrete valued with uniformizer $t$.  We will show that, given any finite purely inseparable modular $l/k$  and any $l/k$-admissible gene $\mathcal{G}$ of length $[l:k]$ over $K$, one can construct infinitely many non-isomorphic $\mathcal{G}$-cyclic extensions.  For example, let $n=[l:k]$ and let $\mathcal{G} = g_1g_2\cdots g_n$ be an $l/k$-admissible gene. Since the length of $\mathcal{G}$ is equal to $[l:k]$, every $g_i$ is of ferocious type. Let  $g_1 = F_a$ for some $a \in \mathcal{O}_K$.  With the notations in Theorem \ref{thm:gweave_exists},   taking $\alpha_0 =0$,  the  Artin-Schreier extension over $K$ constructed in the first step of the induction is defined by 
    \begin{align} 
        x_1^p - x_1 = a/t^{p^{l_1}}
    \end{align}
  with $l_1$ satisfying the condition that  $l_1\geq N$. To emphasis the dependence of this extension on the choice of $l_1$, let us denote this extension by $L_1(l_1)$.   For a different choice $l_1'$ such that $l_1'\geq N$, we get  $L_1(l_1')$ defined by 
   \begin{align} \label{eqn:temp12}
        x_1^p - x_1 = a/t^{p^{l_1'}}
    \end{align}
    Both these extensions have the same residual extension, namely $k( \overline{a}^{1/p})$
     However,  since  $l_1\neq l_2$, $a/t^{p^{l_1}}   \notin c\cdot a/t^{p^{l_1'}} + \mathcal{P}(K)$ for every  $c \in \mathbb{F}_p$. Therefore,  by Artin-Schreier theory (\cite[Chapter VI, Theorem 8.3]{lang_algebra}), $L_1(l_1)$ and $L_1(l_1')$  are  non-isomorphic. In particular, the construction in Theorem \ref{thm:gweave_exists} yields different $\mathcal{G}$-cyclic extensions for the same $\mathcal{G}$. In fact, it is clear that there are infinitely many $\mathcal{G}$-cyclic extensions.

\section{Tower of $\mathcal{G}$-cyclic extensions over unramified and split cyclic extensions} \label{sec:tower}

Let  $F$ be a discrete valued field of characteristic $p$ and let  $t$ be a uniformizer. Recall the definition of \emph{split extensions} of $F$ from  \S\ref{sec:notn} (Definition  \ref{defn:split}). In general, $F$ need not necessarily  admit a split Artin-Schreier extension (for example, when $F$ is complete).  Suppose $F$ does admit a split  Artin-Schreier extension, we first show that  $F$ admits split Artin-Schreier-Witt  extensions  of any finite degree. Secondly, by the structure theorem of discrete valued Artin-Schreier-Witt  extensions (Theorem \ref{thm:structure}), if $K/F$ is a  completely ramified Artin-Schreier-Witt  extension, it is not possible to construct a unramified or split  Artin-Schreier extension $L/K$ such that $L/F$ is cyclic. However we may ask if we can construct a completely ramified  extension over any given  unramified or split Artin-Schreier-Witt  extension such that the resulting tower is cyclic. We prove  that this is indeed true in a more general setting.

\begin{lemma}\label{lem:splitASW}
    Suppose  $F$ admits a split Artin-Schreier extension.  Then  there exists a  split  Artin-Schreier-Witt  extension of  degree $p^m$  for any $m$.
    \end{lemma}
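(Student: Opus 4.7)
My plan is to exhibit the desired extension explicitly by lifting the given split Artin-Schreier class to a Witt vector. Let $b \in F$ represent a non-trivial class in $\ker\bigl(F/\mathcal{P}(F) \to \hat{F}/\mathcal{P}(\hat{F})\bigr)$, so $b \notin \mathcal{P}(F)$ but $b \in \mathcal{P}(\hat{F})$. Since $\bar{b} \in \mathcal{P}(k)$ (otherwise $b \notin \mathcal{P}(\hat{F})$ by Hensel), I can subtract $\mathcal{P}(c)$ for an $\mathcal{O}_F$-lift $c$ of any Artin-Schreier preimage of $\bar{b}$ in $k$ to arrange $b \in \mathfrak{m}_F$ without changing its class. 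I then form the Witt vector $\mathbf{a} = (b, 0, \ldots, 0) \in W_m(F)$ and take $L/F$ to be the Artin-Schreier-Witt extension corresponding to $[\mathbf{a}]$ in $W_m(F)/(F-1)W_m(F)$.

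To show $L/F$ is split at the unique prime of $F$, I verify that $\mathbf{a} \in (F-1) W_m(\hat{F})$ by iteratively constructing $\hat{\mathbf{y}} \in W_m(\hat{F})$ with $(F-1)\hat{\mathbf{y}} = \mathbf{a}$. Hensel applied to $x^p - x - b$ (whose reduction $x^p - x$ has $p$ distinct roots in $\mathbb{F}_p$) produces $\hat{y}_0 \in \mathfrak{m}_{\hat{F}}$ with $\mathcal{P}(\hat{y}_0) = b$. The higher components of $(F-1)\hat{\mathbf{y}}$ take the form $\mathcal{P}(\hat{y}_i) + R_i(\hat{y}_0, \ldots, \hat{y}_{i-1})$, for integer-coefficient Witt-subtraction polynomials $R_i$ with vanishing constant term; inductively, the right-hand side lies in $\mathfrak{m}_{\hat{F}}$, and Hensel yields $\hat{y}_i \in \mathfrak{m}_{\hat{F}}$.

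To verify $[\mathbf{a}]$ has order exactly $p^m$, I use the identity $p = VF$ in characteristic $p$ to compute $p^{m-1}\mathbf{a} = (0, \ldots, 0, b^{p^{m-1}})$, so it suffices to show this vector is not in $(F-1) W_m(F)$. Suppose $(F-1)\mathbf{x} = (0, \ldots, 0, b^{p^{m-1}})$ for some $\mathbf{x} = (x_0, \ldots, x_{m-1}) \in W_m(F)$. A component-by-component inspection beginning with $\mathcal{P}(x_0) = 0 \Rightarrow x_0 \in \mathbb{F}_p$ forces, inductively, each $x_i$ with $i < m-1$ to satisfy an Artin-Schreier equation over $\mathbb{F}_p$, hence $x_i \in F \cap \overline{\mathbb{F}_p}$. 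The final component then forces $[b] = [b^{p^{m-1}}]$ to coincide modulo $\mathcal{P}(F)$ with an element of $F \cap \overline{\mathbb{F}_p}$, which can be avoided by an appropriate choice of $b$ (and $[b]$ is preserved under the reduction to $\mathfrak{m}_F$).

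The main obstacle is the last step: I must guarantee the existence of a split class $[b]$ not lying in the image of $F \cap \overline{\mathbb{F}_p}$ in $F/\mathcal{P}(F)$. Under the hypothesis this should hold because a split class coming from an algebraic-over-$\mathbb{F}_p$ element would yield a constant-field extension, which could only split if the residue field already contained it; this can be analyzed case by case from the residue-field structure. Should the pathological case arise, one falls back on an inductive argument: assuming a split ASW $E/F$ of degree $p^{m-1}$ exists, use Theorem~\ref{thm:albert} to write any cyclic degree-$p^m$ extension containing $E$ as $E(\gamma)$ with $\gamma^p - \gamma = \alpha$ for an Albert element $\alpha$, and use Lemma~\ref{lem:albert} together with the splitness of $E/F$ (so each completion is $\hat{F}$, and all components $\alpha_i$ share the same class in $\hat{F}/\mathcal{P}(\hat{F})$ since $\alpha_{i+1} - \alpha_i = \mathcal{P}(\beta_i)$) to modify $\alpha$ into an Albert element locally lying in $\mathcal{P}(\hat{F})$.
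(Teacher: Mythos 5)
Your construction coincides with the paper's: normalize the split Artin--Schreier class to a representative $b\in\mathfrak{m}_F\setminus\mathcal{P}(F)$ (this is exactly what Lemma \ref{lem:valram} gives via an optimal representative; your Hensel argument for this step tacitly assumes $b\in\mathcal{O}_F$, so it is cleaner to quote that lemma), then take $\mathbf{a}=(b,0,\dots,0)\in W_m(F)$ and the associated Artin--Schreier--Witt extension $L/F$. Your splitness verification --- solving $(F-1)\hat{\mathbf{y}}=\mathbf{a}$ in $W_m(\hat F)$ componentwise by Hensel and concluding $L\otimes_F\hat F\cong\hat F^{p^m}$ --- is a correct alternative to the paper's integral argument, which instead shows each layer $L_i/L_{i-1}$ is generated by $x_i^p-x_i=h_i$ with $h_i$ in the Jacobson radical of $B_{i-1}$ and then uses that a tower of split extensions is split.

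The genuine gap is the degree computation, and you flag it yourself; but the obstacle you try to engineer around is illusory, and your two fallbacks (choosing $b$ to avoid constant-field classes ``case by case'', and the sketch modifying Albert elements) are not proofs. The correct statement needs no hypothesis beyond $b\notin\mathcal{P}(F)$: if $(F-1)\mathbf{x}=p^{m-1}\mathbf{a}=(0,\dots,0,b^{p^{m-1}})=V^{m-1}(b^{p^{m-1}})$, then $x_0\in\mathbb{F}_p$, so $(x_0,0,\dots,0)$ is killed by $F-1$; subtracting it, $\mathbf{x}-(x_0,0,\dots,0)=V\mathbf{z}$ for some $\mathbf{z}\in W_{m-1}(F)$, and since $FV=VF$ and $V$ is injective, $(F-1)\mathbf{z}=V^{m-2}(b^{p^{m-1}})$ in $W_{m-1}(F)$; iterating gives $b^{p^{m-1}}\in\mathcal{P}(F)$, hence $b\in\mathcal{P}(F)$, a contradiction. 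Equivalently, in your componentwise bookkeeping the correction polynomials $R_i$ vanish once the earlier components are Frobenius-fixed (the $i$-th Witt subtraction polynomial of two vectors agreeing in components $<i$ is just the difference of the $i$-th components), so each $x_i$ lies in $\mathbb{F}_p$, not merely in $F\cap\overline{\mathbb{F}_p}$, and no constant-field shift of the last component can occur. This is precisely the standard fact the paper invokes (via Lang's exercise): the order of $[(b,0,\dots,0)]$ in $W_m(F)/\mathcal{P}(W_m(F))$ is $p^m$ whenever $b\notin\mathcal{P}(F)$. With that step repaired (or simply cited), your proof is complete and essentially the paper's, differing only in proving splitness after completion rather than integrally.
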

    \begin{proof}
    
       Let $L_1/K$ be a split Artin-Schreier extension.  By Lemma \ref{lem:valram}, 
       \begin{align*}
           L_1\simeq K[x_1]/(x_1^p - x_1 -a)
       \end{align*}
       for some $a \notin \mathcal{P}(K)$ with $a \in \mathfrak{m}_K$.  Let $W_m(K)$  denote the group of truncated Witt vectors of length $m$ and let $\mathcal{P}$ denote the Artin-Schreier operator on $W_m(K)$:
       \begin{align*}
           \mathcal{P}: W_m(K) &\rightarrow W_m(K)\\
            \mathbf{x} &\rightarrow \mathbf{x}^p - \mathbf{x}
       \end{align*}
    
Let   $L/K$ denote  the Artin-Schreier-Witt  extension corresponding to the Witt vector $\mathbf{a} = (a, 0,\cdots, 0) \in W_m(K)$ (\cite[Chapter VI, Exercise 50]{lang_algebra}). First note that  the extension is generated by $\mathbf{x} = (x_1,x_2, \cdots, x_m)$ satisfying
       \begin{align}\label{eqn:asw}
           \mathbf{x}^p - \mathbf{x} = \mathbf{a}
       \end{align} 
in the Witt vector arithmetic. Since $a \notin \mathcal{P}(K)$, the order of $\mathbf{a} \in W_m(K)/\mathcal{P}(W_m(K))$ is $p^m$. Therefore, $L/K$ is an Artin-Schreier-Witt  extension of degree $p^m$ (this also follows from \cite[Chapter VI, Exercise 50]{lang_algebra}). Let  
       \begin{align*}
       \mathbf{s} := (s_1, s_2, \dots s_m) = \mathbf{x} + \mathbf{a}
       \end{align*}
   We claim the following:\\
 \emph{Claim:} For every $i\leq m$
       \begin{align*}
           s_i \in  x_i + h_i
       \end{align*}
where $h_i \in a\mathbb{Z}[a,x_1, x_2, \cdots, x_{i-1}]$. \\
\indent Suppose the claim is true. Let $B_i$ is the integral closure of $\mathcal{O}_K$ in $L_i$ (the unique cyclic subextension of degree $p^i$ over $K$). Then we see  from (\ref{eqn:asw}) that the extension $L_i/L_{i-1}$ generated by $x_i$ satisfies 
\begin{align*}
    x_i ^p -x_i  = h_i \in \mathfrak{m}_K B_{i-1} \subseteq  \bigcap_{\mathfrak{p} \in Spec~ B_{i-1}} \mathfrak{p}
\end{align*}
By Lemma \ref{lem:valram},  this implies $L_i/L_{i-1}$ is split . Since tower of split extensions is split,  $L/K$ is split. So it suffices to prove the claim.\\
\indent We prove the claim by induction on $i$. Since $s_1 = x_1 + a$, we take $h_1= a$ and  the claim is clearly true for $i=1$. Assume that the claim is true for every $j\leq i-1$.  Now the following holds  in  $\mathbb{Z}[a, x_1, x_2, \cdots x_i]$(\cite[Chapter VI, Exercise 46, 47]{lang_algebra})
\begin{align*}
    s_1^{p^i}+ ps_2^{p^{i-1}}+ \cdots p^is_i &= x_1^{p^i}+ px_2^{p^{i-1}}+ \cdots + p^ix_i + a^{p^i}\\
     (x_1 + h_1)^{p^i}+ p(x_2 + h_2)^{p^{i-1}}+ \cdots + p^{i-1}(x_{i-1} + h_{i-1})^p + p^is_i &= x_1^{p^i}+ px_2^{p^{i-1}}+ \cdots +p^ix_i + a^{p^i}
\end{align*}
By induction hypothesis, $h_j \in a \mathbb{Z}[a, x_1, x_2, \cdots x_{j-1}]$.  Expanding  brackets, cancelling out terms on both the sides of the above equation and using the fact  that  $s_i \in \mathbb{Z}[a, x_1, x_2, \cdots x_i]$ (\cite[Chapter VI, Exercise 46]{lang_algebra}), we get 
\begin{align*}
    s_i -x_i \in a\mathbb{Z}[a, x_1, x_2, \cdots x_{i-1}]
\end{align*}
as claimed.
\end{proof}

\begin{remark} \normalfont \label{rmk:basechange}
    Suppose $F$ is a discrete valued field  and let $\mathcal{G}$ be a gene over $F$ with  a given  alphabet set. Suppose $K/F$ is either a split or an unramified   extension. Note that  for any discrete valuation on $K$ extending the one on  $F$,  $\Gamma_K = \Gamma_F$ and any uniformizer of $F$ is a uniformizer of $K$. Moreover, $k/f$ is either trivial or separable. Therefore, any subset of elements of $f$ that are $p$-independent over $f^p$ are also $p$-independent over $k^p$ (\cite[Chapter 2, Lemma 2.7.3]{fried_fa}). Hence, via base change, we see that $\mathcal{G}$ is also a gene over $K$ with  the same alphabet set.  We abuse notation and  denote  the resulting gene over $K$ also by $\mathcal{G}$.  
    \end{remark}
The following theorem is a slight variant of Theorem \ref{thm:gweave_exists}.
\begin{thm} \label{thm:gweave_tower}
     Let $\mathcal{G}$ be a gene of length $n$ over $F$ and let $N > \Sigma(n)$. Suppose $K/F$ is either a split or an unramified Artin-Schreier-Witt  extension. Let $u_{g_i} = z_{g_i}$ where $z_{g_i}$ is given by (\ref{eqn:initial}).  Then one can inductively construct a cyclic extension $L/K$ of degree $p^n$  with  the following properties: For every $1 \leq i \leq n$,
     \begin{enumerate}
     \item $L_i/F$ is cyclic where $L_i$ is the unique cyclic subextension in $L$ of degree $p^i$ over $K$
      \item    $L_i \simeq L_{i-1}[x_i]$  where $x_i$ satisfies
     \begin{align*}
         x_i^p - x_i = u_i + \alpha_{i-1}^{p^N}
     \end{align*}
     where $u_i = (1/t^{p^{l_i}}) u_{\mathfrak{a}(i)}$ for some $ l_i \geq N$ and $\alpha_{i-1}$ is any Albert element of  $L_{i-1}/F$.
     \item  there exists $z_i \in L_i$  such that $x_i \in L_{i-1}^{\{N-\Sigma(i), i\}}[z_i]$  and which satisfies 
     \begin{align*} 
         z_i^p - \gamma_{1i} z_i = z_{\mathfrak{a}(i)} + \gamma_{2i}
         \end{align*}
         for some $\gamma_{1i}, \gamma_{2i} \in \mathfrak{m}_{L_{i-1}}$.
\end{enumerate}
In particular, $L/K$ is a $\mathcal{G}$-weave and $L/F$ is cyclic.
\end{thm}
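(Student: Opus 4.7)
The plan is to mimic the proof of Theorem \ref{thm:gweave_exists} verbatim at the level of the local/valuation-theoretic construction and to upgrade it so that cyclicity holds over $F$ rather than just over $K$. Two preparatory observations make this upgrade possible. First, fixing (in the split case) a discrete valuation on $K$ extending the one on $F$, Remark \ref{rmk:basechange} allows us to view $\mathcal{G}$ as a gene over $K$ with the same alphabet set $\mathfrak{F} \subseteq \mathcal{O}_F \subseteq \mathcal{O}_K$ and the same uniformizer $t$. Second, tracking the recursion $u_i = (1/t^{p^{l_i}})\, u_{\mathfrak{a}(i)}$ whose base values lie in $\{a, 1/t\} \subseteq F$, one sees by induction that every $u_i$ lies in $F$, not merely in $K$.

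With these in place, the construction at step $i$ proceeds as in Theorem \ref{thm:gweave_exists}, with the single difference that we choose $\alpha_{i-1}$ to be an Albert element of $L_{i-1}/F$ rather than of $L_{i-1}/K$. We set $L_i \simeq L_{i-1}[x_i]$ with $x_i^p - x_i = u_i + \alpha_{i-1}^{p^N}$, and then produce $z_i$ either as $t^{p^{l_i-1}} x_i$ (in the first-occurrence case $\mathfrak{a}(i) = g_i$) or via the $l_i$-large-enough application of Lemma \ref{lem:simplify2} to $x_{\mathfrak{a}(i)}$, just as before. Statements (2) and (3) follow because all manipulations take place inside $L_{i-1}$ with its valuation, which is unaffected by the base change from $F$ to $K$: $\Gamma_{L_{i-1}}$ is computed the same way, $t$ remains the chosen uniformizer, and the inductive expressions $x_{\mathfrak{a}(i)} \in L_{i-1}^{\{N-\Sigma(i-1), i-1\}}[z_{\mathfrak{a}(i)}]$ are unchanged.

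To prove (1), I argue by induction on $i$. The base case $L_0 = K$ is cyclic over $F$ by hypothesis. For the inductive step, $L_{i-1}/F$ is cyclic by induction, so an Albert element $\alpha_{i-1}$ of $L_{i-1}/F$ exists; since $u_i \in F$, Lemma \ref{lem:albert} guarantees that $\alpha_{i-1}^{p^N} + u_i$ is again an Albert element of $L_{i-1}/F$, whence Theorem \ref{thm:albert} produces a cyclic extension $L_i/F$ of degree $p \cdot [L_{i-1}:F]$ realized by the above Artin-Schreier equation. The main technical burden is therefore the bookkeeping that pins down $u_i \in F$ throughout the recursion; once that is settled, cyclicity of $L_i/F$ is handed to us by Lemma \ref{lem:albert} and Theorem \ref{thm:albert}, and the conclusion that $L/K$ is a $\mathcal{G}$-weave (hence $\mathcal{G}$-cyclic by Theorem \ref{thm:gweave_gcyclic}) follows exactly as in Theorem \ref{thm:gweave_exists}.
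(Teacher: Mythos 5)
Your proposal is correct and follows essentially the same route as the paper: repeat the construction of Theorem \ref{thm:gweave_exists} verbatim, replacing the Albert element of $L_{i-1}/K$ by one of $L_{i-1}/F$, and invoke Theorem \ref{thm:albert} together with Lemma \ref{lem:albert} to get cyclicity of $L_i/F$. Your explicit bookkeeping that every $u_i$ lies in $F$ (needed so that Lemma \ref{lem:albert} applies with $c=u_i$) is a detail the paper leaves implicit, but it is the same argument.
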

\begin{proof}
The construction of $L/K$ is essentially the same as the one in Theorem \ref{thm:main}. The only difference is that  in step (2),  we choose $\alpha_{i-1} \in L_{i-1}$  to be Albert element of  $L_{i-1}/F$  instead of $L_{i-1}/K$. By Theorem \ref{thm:albert} and Lemma \ref{lem:albert},  $L_i/F$ cyclic so that (1) holds.
\end{proof}

\begin{thm} \label{thm:splitunrammain}
    Let $K/F$ be a discrete valued finite Artin-Schreier-Witt  extension that is either  unramified or split.  Then given an  arbitrary gene $\mathcal{G}$ over $F$, one can construct a  $\mathcal{G}$-cyclic extension $L/K$  such that $L/F$ is cyclic .
\end{thm}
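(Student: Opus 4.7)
The plan is to deduce Theorem \ref{thm:splitunrammain} essentially as a direct combination of the preparatory results already built up in Section \ref{sec:tower}, rather than by reworking the induction from scratch. First I would note that although $\mathcal{G}$ is a priori only a gene over $F$, Remark \ref{rmk:basechange} tells me that because $K/F$ is either unramified or split, the value group and a chosen uniformizer $t$ of $F$ remain a uniformizer of $K$, and the residue extension $k/f$ is either trivial or separable. Hence a $p$-independent subset of $f$ over $f^p$ stays $p$-independent in $k$ over $k^p$, so the same alphabet set $\mathfrak{F}$ realizes $\mathcal{G}$ as a gene over $K$ as well. This is the input that is needed before we even begin constructing anything upstairs.

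Next I would invoke Theorem \ref{thm:gweave_tower} directly with this gene $\mathcal{G}$ (viewed now over $K$) and with any $N > \Sigma(n)$ where $n$ is the length of $\mathcal{G}$. That theorem inductively produces a cyclic extension $L/K$ of degree $p^n$ together with elements $x_i$ and $z_i$ satisfying exactly the defining relation
\begin{align*}
z_i^p - \gamma_{1i} z_i = z_{\mathfrak{a}(i)} + \gamma_{2i}, \qquad \gamma_{1i},\gamma_{2i}\in\mathfrak{m}_{L_{i-1}},
\end{align*}
for each $1\le i\le n$. In particular, by Definition \ref{defn:gweave}, $L/K$ is a $\mathcal{G}$-weave. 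Crucially, at each step $\alpha_{i-1}$ is chosen as an Albert element of $L_{i-1}/F$ rather than $L_{i-1}/K$, and this guarantees, via Theorem \ref{thm:albert} and Lemma \ref{lem:albert}, that each $L_i/F$ is cyclic; so the entire tower $L/F$ is cyclic as required.

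Finally, applying Theorem \ref{thm:gweave_gcyclic} to the $\mathcal{G}$-weave $L/K$ shows that $L/K$ is $\mathcal{G}$-cyclic. Combining the two conclusions yields the desired extension $L/K$ that is $\mathcal{G}$-cyclic and sits in a cyclic tower $L/F$.

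I do not anticipate a significant obstacle: the heavy lifting has already been done in Theorem \ref{thm:gweave_tower}, which is precisely the tower-aware refinement of Theorem \ref{thm:gweave_exists}. The only subtle points are (a) verifying that the gene descends/ascends correctly between $F$ and $K$, which is handled by Remark \ref{rmk:basechange} and uses the unramified-or-split hypothesis in an essential way (to preserve uniformizers and $p$-independence), and (b) ensuring that Albert elements over $F$ exist at each stage, which is exactly the adjustment built into Theorem \ref{thm:gweave_tower}. Once these are in place, the theorem follows by concatenating the statements.
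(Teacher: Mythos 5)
Your proposal is correct and follows exactly the paper's own route: apply Theorem \ref{thm:gweave_tower} (with the gene transferred to $K$ via Remark \ref{rmk:basechange} and Albert elements taken over $F$) to get a $\mathcal{G}$-weave $L/K$ with $L/F$ cyclic, then conclude $\mathcal{G}$-cyclicity from Theorem \ref{thm:gweave_gcyclic}. No gaps; this matches the paper's proof.
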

\begin{proof}
    This directly follows from Theorem \ref{thm:gweave_tower} and Theorem \ref{thm:gweave_gcyclic}.
\end{proof}
\begin{cor}
    Suppose $k$ admits an Artin-Schreier extension. Then given any $m\geq 0$ and an arbitrary gene $\mathcal{G}$ over $K$, one can construct Artin-Schreier-Witt  extension $L/K$ such that $L_m/K$ is unramified and $L/L_m$ is $\mathcal{G}$-cyclic. 
\end{cor}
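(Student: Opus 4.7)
The plan is to mirror the proof of the split-version corollary that precedes this statement, replacing \emph{split} by \emph{unramified} throughout. The key observation is that ``$k$ admits an Artin-Schreier extension'' is equivalent to $\mathcal{P}(k)\subsetneq k$, so I can pick some $\bar a\in k\setminus\mathcal{P}(k)$ together with a lift $a\in\mathcal{O}_K$. A quick valuation argument shows $a\notin\mathcal{P}(K)$: if $a=x^p-x$, then either $v(x)\geq 0$, forcing $\bar a\in\mathcal{P}(k)$, or $v(x)<0$, forcing $v(a)=pv(x)<0$, both contradictions.

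Next I would establish the unramified analog of Lemma \ref{lem:splitASW}: the Artin-Schreier-Witt extension $L_m/K$ corresponding to the Witt vector $\mathbf{a}=(a,0,\dots,0)\in W_m(K)$ is unramified of degree $p^m$. The degree calculation is identical to that of Lemma \ref{lem:splitASW}; the only ingredient there was $a\notin\mathcal{P}(K)$. For unramifiedness, note that the residue extension of $L_m/K$ is cut out by $(\bar a,0,\dots,0)\in W_m(k)$ and is therefore separable cyclic of degree $p^m$ over $k$ by the same Witt-vector bookkeeping, since $\bar a\notin\mathcal{P}(k)$. Because this residue extension already attains degree $p^m=[L_m:K]$, the extension $L_m/K$ must be unramified.

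Then I would apply Theorem \ref{thm:splitunrammain} with base $F=K$ and intermediate unramified extension $L_m/K$, together with the gene $\mathcal{G}$ viewed over $L_m$ via Remark \ref{rmk:basechange}. This produces $L/L_m$ that is $\mathcal{G}$-cyclic and such that $L/K$ is cyclic. Since $L/K$ is a cyclic $p$-extension (hence Artin-Schreier-Witt) of degree $p^{m+n}$ containing $L_m$ as a degree-$p^m$ subextension, $L_m$ is automatically the unique cyclic subfield of degree $p^m$ in $L$, so it agrees with the notation of Definition \ref{defn:cyclicsub}. This yields exactly the required extension.

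The main obstacle is purely the unramified analog of Lemma \ref{lem:splitASW}, but this is if anything easier than the split case: unramifiedness is detected on the residue side via the canonical correspondence between unramified extensions of $K$ and separable extensions of $k$, so no integral-closure computation comparable to the one in the split proof is needed. No genuinely new technical ingredient is required beyond the results already established.
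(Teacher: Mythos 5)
Your proposal is correct, and its endgame coincides with the paper's: both reduce to Theorem \ref{thm:splitunrammain} applied to an unramified Artin-Schreier-Witt extension of degree $p^m$ over $K$, with the uniqueness of the degree-$p^m$ subfield of a cyclic $p$-extension identifying that base with $L_m$ in the sense of Definition \ref{defn:cyclicsub}. The difference is how the unramified base is produced. The paper works on the residue side: since $k$ admits an Artin-Schreier extension, Theorem \ref{thm:albert} gives Artin-Schreier-Witt extensions $l'/k$ of degree $p^m$, and one then takes an unramified lift $L'/K$ of $l'/k$. You instead build the extension directly over $K$ from the Witt vector $(a,0,\dots,0)\in W_m(K)$, where $a$ is a unit lift of some $\bar a\in k\setminus\mathcal{P}(k)$, in exact analogy with Lemma \ref{lem:splitASW}: cyclicity of degree $p^m$ follows from $a\notin\mathcal{P}(K)$ (your valuation argument for this is fine), and unramifiedness from the fact that the residues of the Witt components satisfy $\bar{\mathbf{x}}^p-\bar{\mathbf{x}}=(\bar a,0,\dots,0)$ over $k$, giving a separable residue extension of degree $p^m=[L_m:K]$, so that $e=1$ and $[l_m:k]_{sep}=p^m$. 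Two small points: you should record that the components $x_i$ are integral over $\mathcal{O}_K$ (immediate from the integer-coefficient Witt formulas, as in the Claim inside Lemma \ref{lem:splitASW}) so that their residues make sense; and you should drop the appeal to the ``canonical correspondence'' between unramified extensions of $K$ and separable extensions of $k$, which is only literally available over henselian or complete $K$ --- your degree count already does the job without it. What your route buys is that the intermediate extension is visibly cyclic, whereas the paper's phrase ``unramified lift of $l'/k$'' tacitly requires the lift to be cyclic, which is standard for complete $K$ but deserves justification here since completeness is not assumed --- and your Witt-vector construction is precisely one way to supply that justification.
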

\begin{proof}
    Since $k$ admits an Artin-Schreier extension, there exists  Artin-Schreier-Witt extensions  $l'/k$ of degree $p^m$ for any $m\geq 0$ by Theorem \ref{thm:albert}. Let $L'/K$ be an unramified lift of $l'/k$.  By Theorem \ref{thm:splitunrammain}, given any gene $\mathcal{G}$  over $K$ there exists  a $\mathcal{G}$- cyclic $L/L'$  extension such that $L/K$ is cyclic. It is easy to see that $L'=L_m$ and the rest of the claim follows. 
\end{proof}
\begin{cor}\label{cor:unrammain}
    Given any finite  extension $l/k$ such that $l \simeq l_1 \otimes_k l_2$ where $l_1/k$ is Artin-Schreier-Witt  and $l_2/k$ is  purely inseparable modular, there exists  a finite discrete valued Artin-Schreier-Witt  extension $L/K$ with residue $l/k$.
\end{cor}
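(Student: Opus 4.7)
The plan is to lift the separable Artin-Schreier-Witt part $l_1/k$ to an unramified extension of $K$, then stack the purely inseparable modular part $l_2$ on top via Theorem \ref{thm:splitunrammain}, keeping the whole tower cyclic. First I would invoke the classical unramified lifting of separable residue extensions: since $l_1/k$ is Artin-Schreier-Witt (hence cyclic and separable), there exists an unramified cyclic extension $K_1/K$ of degree $[l_1:k]$ with residue $l_1/k$, itself an Artin-Schreier-Witt extension of $K$.

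Next, using the modularity of $l_2/k$, I would write
\[
l_2 \simeq \bigotimes_{i=1}^{s} k[x]/(x^{p^{r_i}} - \overline{a_i})
\]
for some $p$-independent family $\{\overline{a_i}\}$ in $k$ over $k^p$ and positive integers $r_i$. After choosing lifts $a_i \in \mathcal{O}_K$ with pairwise distinct residues, I form the alphabet $\mathfrak{F} = \{a_1,\dots,a_s\}$ and define a gene $\mathcal{G}$ over $K$ by writing the letter $F_{a_i}$ exactly $r_i$ times (in any order); by construction $\mathcal{G}$ is $l_2/k$-admissible. By Remark \ref{rmk:basechange}, since $K_1/K$ is unramified, the same $\mathcal{G}$ is also a gene over $K_1$ with alphabet $\mathfrak{F}$. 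Applying Theorem \ref{thm:splitunrammain} to the unramified Artin-Schreier-Witt extension $K_1/K$ and the gene $\mathcal{G}$ then yields a $\mathcal{G}$-cyclic extension $L/K_1$ such that $L/K$ is cyclic, and hence Artin-Schreier-Witt over $K$.

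It only remains to verify the residue. Because $l_1/k$ is separable, the family $\{\overline{a_i}\}$ remains $p$-independent over $l_1^p$, so $\mathcal{G}$ viewed over $K_1$ is admissible for the extension $\bigotimes_i l_1[x]/(x^{p^{r_i}} - \overline{a_i}) \simeq l_1 \otimes_k l_2$ of $l_1$. Remark \ref{rmk:adm} then identifies the residue of $L/K_1$ with $l_1 \otimes_k l_2$, and combined with the residue $l_1/k$ of $K_1/K$ this gives residue $l_1 \otimes_k l_2 \simeq l$ for $L/K$, as required. The only real subtlety is the bookkeeping across the base change from $K$ to $K_1$ — checking that $\mathcal{G}$ is still a gene with the same alphabet over $K_1$ and that the expected residue falls out — which is exactly what Remark \ref{rmk:basechange} together with the invariance of $p$-independence under separable base change delivers.
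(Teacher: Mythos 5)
Your proposal is correct and follows essentially the same route as the paper: take an unramified lift of $l_1/k$, choose an $l_2/k$-admissible gene over $K$, base-change it via Remark \ref{rmk:basechange}, apply Theorem \ref{thm:splitunrammain}, and identify the residue using linear disjointness of $l_1$ and $l_2$ over $k$. You simply spell out the bookkeeping (construction of the gene and preservation of $p$-independence) that the paper leaves implicit.
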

\begin{proof}
    Take an unramified lift of $l_1/k$ and let $\mathcal{G}$ be an $l_2/k$-admissible gene over $K$. Apply Theorem \ref{thm:splitunrammain} and use the fact that $l_1$ and $l_2$  are linearly disjoint over $k$ (\cite[Chapter 2, Lemma 2.5.4]{fried_fa}).
\end{proof}
    \begin{cor} \label{cor:splitmain}
    Suppose $K$ admits a  split Artin-Schreier extension. Then given any $m\geq 0$ and an arbitrary gene $\mathcal{G}$ over $K$, one can construct Artin-Schreier-Witt  extension $L/K$ such that $L_m/K$ is split and $L/L_m$ is $\mathcal{G}$-cyclic (w.r.t any chosen discrete valuation on $L_m$). In particular, given a finite purely inseparable modular  $l/k$, an $l/k$-admissible gene $\mathcal{G}$ over $K$ and any $m \geq 0$, there exists  discrete valued Artin-Schreier-Witt  extensions $L/K$ with residue $l/k$ such that $L_m/K$ is split and $L/L_m$ is $\mathcal{G}$-cyclic.
\end{cor}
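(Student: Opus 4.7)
The plan is to stitch together Lemma~\ref{lem:splitASW} and Theorem~\ref{thm:splitunrammain}. First I would apply Lemma~\ref{lem:splitASW} to the field $K$: by hypothesis $K$ admits a split Artin-Schreier extension, so the lemma produces a split Artin-Schreier-Witt extension $F_m/K$ of degree $p^m$ (with $F_0=K$ when $m=0$). I then fix any discrete valuation on $F_m$ extending the one on $K$; since $F_m/K$ is split, its residue field is $k$, its value group equals $\Gamma_K$, and any uniformizer of $K$ remains a uniformizer of $F_m$, so the base-change mechanism of Remark~\ref{rmk:basechange} lets me regard $\mathcal{G}$ as a gene over $F_m$ with the same alphabet set.

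Next I would invoke Theorem~\ref{thm:splitunrammain} with base field $F := K$ and the split Artin-Schreier-Witt extension $F_m/K$ just constructed, together with the given gene $\mathcal{G}$ over $K$. The theorem then produces a $\mathcal{G}$-cyclic extension $L/F_m$ such that $L/K$ is cyclic. Being cyclic of $p$-power degree, $L/K$ is Artin-Schreier-Witt. Because $F_m$ is a cyclic subextension of $L/K$ of degree $p^m$, the uniqueness of the degree-$p^m$ cyclic subextension of a cyclic $p$-power tower forces $L_m = F_m$. Hence $L_m/K$ is split and $L/L_m = L/F_m$ is $\mathcal{G}$-cyclic, establishing the first assertion of the corollary.

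For the second assertion, given a finite purely inseparable modular $l/k$ and an $l/k$-admissible gene $\mathcal{G}$ over $K$, I would run the construction above with this $\mathcal{G}$. Since $L_m/K$ is split its residue field equals $k$, so $\mathcal{G}$ is automatically $l/l_m$-admissible over $L_m$. By Remark~\ref{rmk:adm} the residue of the $\mathcal{G}$-cyclic extension $L/L_m$ is $l/l_m = l/k$, and hence the residue of $L/K$ is $l/k$ as required.

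I expect no substantive obstacle: the argument is essentially a direct assembly of Lemma~\ref{lem:splitASW}, Remark~\ref{rmk:basechange}, Theorem~\ref{thm:splitunrammain} and Remark~\ref{rmk:adm}. The only point one needs to verify carefully is that the split extension produced in the very first step coincides with the canonical intermediate field $L_m$ of the final cyclic tower, and this is forced for free by uniqueness of degree-$p^i$ cyclic subextensions of a cyclic $p$-power extension.
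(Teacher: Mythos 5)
Your proposal is correct and follows exactly the paper's route: the paper's proof of this corollary is precisely the combination of Lemma~\ref{lem:splitASW} and Theorem~\ref{thm:splitunrammain}, and your additional details (identifying $L_m$ with the split extension via uniqueness of cyclic subextensions, and using Remark~\ref{rmk:adm} for the residue claim) are the natural verifications the paper leaves implicit.
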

\begin{proof}
    This follows from Lemma \ref{lem:splitASW} and Theorem \ref{thm:splitunrammain}.
\end{proof}

\appendix
\section{Structure of   discrete valued  Artin-Schreier-Witt extensions} \label{app}
 Let $K$ be a discrete valued field of characteristic $p$ with valuation $v$ and value group $\Gamma_K = \mathbb{Z}$.  As before the lower case alphabets denote the residues of the corresponding upper case alphabets and $\mathcal{P}$ denotes the Artin-Schreier operator, $\mathcal{P}(x) = x^p-x$.  In this section we show that,
\begin{thm}\label{thm:structure}
   Let $L/K$ be a finite Artin-Schreier-Witt   extension   of discrete valued fields. Then $L/K$ is  of the form $L\supseteq L' \supseteq K$ where $L/L'$ is trivial or completely ramified and  $L'/K'$ is trivial or split or unramified.
\end{thm}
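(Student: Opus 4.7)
The plan is to establish a monotonicity principle for the canonical cyclic tower $K = L_0 \subset L_1 \subset \cdots \subset L_n = L$: once a step $L_i/L_{i-1}$ is completely ramified (wild or ferocious), every subsequent step is again completely ramified. Granting this, I set $L' = L_m$ where $m$ is the largest index such that every step $L_j/L_{j-1}$ with $j\le m$ is trivial, split, or unramified. Then $L/L'$ is a composite of completely ramified Artin-Schreier extensions, hence remains completely ramified since the separable residue degree, ramification index and inseparable residue degree multiply in towers; while $L'/K$ is built entirely out of trivial, split, and unramified pieces, matching the theorem's conclusion.

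First I fix a discrete valuation $v$ on $L$ extending the one on $K$ and let $\mathfrak{q}_i = v|_{L_i}$. Since each $L_i/L_{i-1}$ is Artin-Schreier of prime degree $p$, at $\mathfrak{q}_{i-1}$ it falls into exactly one of the disjoint types: split, unramified, wild, or ferocious; the last two are precisely the completely ramified possibilities in prime degree.

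The heart of the argument is the following key lemma: if $L_i/L_{i-1}$ is completely ramified at $\mathfrak{q}_{i-1}$, then $L_{i+1}/L_i$ must be completely ramified at $\mathfrak{q}_i$. To prove it, I analyze the cyclic degree-$p^2$ subextension $L_{i+1}/L_{i-1}$, whose Galois group is $\mathbb{Z}/p^2\mathbb{Z}$ with unique order-$p$ subgroup $\mathrm{Gal}(L_{i+1}/L_i)$. Suppose first that $L_{i+1}/L_i$ is split. Since $L_i/L_{i-1}$ has a unique prime above $\mathfrak{q}_{i-1}$, namely $\mathfrak{q}_i$, the prime $\mathfrak{q}_{i-1}$ has exactly $p$ primes of $L_{i+1}$ above it, forcing the decomposition group $D \subset \mathrm{Gal}(L_{i+1}/L_{i-1})$ of $\mathfrak{q}_{i+1}$ to have order $p$, hence $D = \mathrm{Gal}(L_{i+1}/L_i)$. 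By functoriality, the image of $D$ in $\mathrm{Gal}(L_i/L_{i-1})$ equals the decomposition group of $\mathfrak{q}_i$ in $L_i/L_{i-1}$, which is all of $\mathrm{Gal}(L_i/L_{i-1})$ (as $L_i/L_{i-1}$ has unique prime above), whereas the image of $\mathrm{Gal}(L_{i+1}/L_i)$ under the quotient map is trivial --- a contradiction. If instead $L_{i+1}/L_i$ is unramified, then $L_{i+1}/L_{i-1}$ has unique prime above, so $D = \mathrm{Gal}(L_{i+1}/L_{i-1})$; an inspection of the residue tower $l_{i-1} \subseteq l_i \subseteq l_{i+1}$ shows that $\mathrm{Aut}(l_{i+1}/l_{i-1})$ is cyclic of order $p$, and after passing to the henselization of $L_{i-1}$ at $\mathfrak{q}_{i-1}$ the surjection $D \twoheadrightarrow \mathrm{Aut}(l_{i+1}/l_{i-1})$ is classical, so the inertia group $I$ has order $p$ and equals $\mathrm{Gal}(L_{i+1}/L_i)$; but then $\mathrm{Gal}(L_{i+1}/L_i)$ acts trivially on $l_{i+1}$, contradicting the faithful action on residues guaranteed by $L_{i+1}/L_i$ being unramified.

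Iterating the lemma, the set of indices for which $L_i/L_{i-1}$ is completely ramified forms a terminal segment of $\{1,\ldots,n\}$, and the complementary initial segment gives $L' = L_m$. The principal obstacle is the surjectivity of $D \twoheadrightarrow \mathrm{Aut}(l/k)$ invoked in the unramified case of the lemma, which is not automatic for non-complete discrete valued fields; I handle it by passing to the henselization at $\mathfrak{q}_{i-1}$, which preserves the decomposition group (the decomposition group of a prime is intrinsically the Galois group of the henselization) while placing us in the henselian regime where the surjection onto the Galois group of the separable residue extension is standard.
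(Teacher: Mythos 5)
Your monotonicity lemma for the upper part of the tower is essentially correct and is a genuinely different, more formal route than the paper's Lemma \ref{lem:comram} (which uses an Albert element together with the trace estimate of Lemma \ref{lem:trace}); but your proof does not establish the theorem as stated, because the conclusion about the bottom piece is stronger than what you prove. The theorem requires $L'/K$ itself to be split or unramified (or trivial), i.e.\ a single homogeneous type, whereas you only obtain that $L'/K$ is a tower of split and unramified degree-$p$ steps; a mixed tower is neither split nor unramified (for instance, an unramified step followed by a split step yields $p$ primes each of residue degree $p$, so neither condition in the conclusion holds). Ruling out the mixture is exactly the paper's Lemma \ref{lem:splitunram}, and it is not a formality. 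Your decomposition-group counting does dispose of ``unramified below, split above'' (the lower step has a unique prime, so the same contradiction as in your split case applies verbatim), but it cannot dispose of ``split below, unramified above'': there the decomposition group of a prime of $L_{i+1}$ over $L_{i-1}$ has order $p$ and equals $\mathrm{Gal}(L_{i+1}/L_i)$, its image in $\mathrm{Gal}(L_i/L_{i-1})$ is trivial, which is consistent with the trivial decomposition group of the split lower step, and inertia is trivial throughout, so every constraint you use is satisfied. Excluding this configuration needs a different idea; the paper does it by producing a degree-$p$ cyclic subextension lifting the residue Artin-Schreier extension and contradicting splitness (first half of Lemma \ref{lem:splitunram}), with the other direction handled via Albert elements and traces. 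Without such an argument your $L'$ need not satisfy the theorem's conclusion.

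A second, smaller gap: you assert that each degree-$p$ step falls into ``exactly one of'' the types split, unramified, wild, ferocious. For a non-complete discrete valued field this exhaustiveness is not automatic: a priori an Artin-Schreier step could be immediate (a unique prime with $e=f=1$, i.e.\ a defect extension), in which case it is none of the four types and the conclusion ``completely ramified'' in your key lemma would fail for it. Excluding defect is precisely the content of the paper's Lemma \ref{lem:valram} (optimal generators plus the Cohen structure theorem applied to the completion), which your argument needs to invoke or reprove.
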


We start with a few lemmas.
\begin{lemma}\label{lem:trace}
    Let $L/K$ be a finite completely ramified Artin-Schreier-Witt extension. Then
    \begin{align*}
        Tr_{L/K}(\mathcal{O}_L) \subseteq \mathfrak{m}_K
    \end{align*}
\end{lemma}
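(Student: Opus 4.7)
The plan is to compute $\mathrm{Tr}_{L/K}(x)$ modulo $\mathfrak{m}_K$ directly and show it vanishes for every $x \in \mathcal{O}_L$. The payoff of the hypothesis of complete ramification is that the Galois group acts trivially on the residue field, which collapses the trace modulo $\mathfrak{m}_K$ to a multiple of the extension degree.

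First, invoking Remark \ref{rmk:dvr}, I note that the integral closure $\mathcal{O}_L$ of $\mathcal{O}_K$ in $L$ is a DVR, so the residue field $l$ and the reduction map $\mathcal{O}_L \to l$ are well-defined. Next, from the definition of completely ramified, writing $[L:K] = e\cdot f$ with $e = [\Gamma_L:\Gamma_K] = p^r$ and $f = [l:k]$, the identity $p^n = e f = p^r \cdot [l:k]_{\mathrm{sep}} \cdot [l:k]_{\mathrm{insep}} = p^{r+s}\cdot [l:k]_{\mathrm{sep}}$ together with $r+s=n$ forces $[l:k]_{\mathrm{sep}} = 1$. Hence $l/k$ is purely inseparable.

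Second, I would observe that $G = \mathrm{Gal}(L/K)$ preserves $\mathcal{O}_L$ (uniqueness of the prime above $\mathfrak{m}_K$) and hence acts on $l$. For every $\bar\alpha \in l$ there exists $m$ with $\bar\alpha^{p^m} \in k$, so $\sigma(\bar\alpha)^{p^m} = \sigma(\bar\alpha^{p^m}) = \bar\alpha^{p^m}$; thus $(\sigma(\bar\alpha) - \bar\alpha)^{p^m} = 0$ in characteristic $p$, forcing $\sigma(\bar\alpha) = \bar\alpha$. Therefore $G$ acts trivially on $l$.

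Finally, for $x \in \mathcal{O}_L$ one computes
\[
\overline{\mathrm{Tr}_{L/K}(x)} \;=\; \sum_{\sigma \in G} \overline{\sigma(x)} \;=\; \sum_{\sigma \in G} \bar x \;=\; p^n\,\bar x \;=\; 0
\]
in $k$, since $\mathrm{char}\, k = p$, $|G| = p^n$ and $n \geq 1$. This gives $\mathrm{Tr}_{L/K}(x) \in \mathfrak{m}_K$, as required. I do not anticipate a genuine obstacle here; the entire argument hinges on extracting the purely inseparable residue extension from the definition of completely ramified, after which trivial action of $G$ on $l$ immediately kills the reduction of the trace.
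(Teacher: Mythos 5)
Your argument is correct and is essentially the paper's proof: the paper asserts directly that complete ramification makes $\mathrm{Gal}(L/K)$ act trivially on $l/k$ and then concludes $\mathrm{Tr}_{L/K}(u)\in p^{n}u+\mathfrak{m}_K=\mathfrak{m}_K$, which is exactly your final computation, while your deduction of the trivial action from pure inseparability of $l/k$ is simply a spelled-out justification of that assertion. One small caveat: the equality $[L:K]=e\cdot f$ you invoke can fail for non-complete discretely valued fields (defect is possible in characteristic $p$), but only the fundamental inequality $e\cdot[l:k]\le[L:K]$ is needed to force $[l:k]_{\mathrm{sep}}=1$, so your argument goes through unchanged.
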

\begin{proof}
   Let $[L:K] = p^n$. Since $L/K$ is completely ramified, any $\sigma \in Gal(L/K)$ acts trivially on $l/k$.  So for any $u \in \mathcal{O}_L$, 
    \begin{align*}
        \sigma(u) = u \text{~mod~} \mathfrak{m}_L
    \end{align*}
and
\begin{align*}
    Tr_{L/K}(u) \in  p^n u + \mathfrak{m}_K  = \mathfrak{m}_K
\end{align*}
\end{proof}

\begin{defn}
    Given $\alpha \in K$, we define
    \begin{align*}
        v_{\alpha} = \sup\{v(\alpha + \gamma^p-\gamma) | \gamma \in K\}
    \end{align*}
\end{defn}

\begin{remark}\label{rmk:optimal}\normalfont
     Suppose $v_{\alpha}\in \mathbb{Z}$, then there exists an element in $\beta \in \alpha + \mathcal{P}(\alpha)$, with $v_{\alpha} = v(\beta)$. If $v_{\alpha} = \infty$, then there exists an element in $\beta \in \alpha + \mathcal{P}(\alpha)$, with $v(\beta)>0$ i.e., $\beta \in \mathfrak{m}_K$. 
\end{remark}
\begin{defn}\normalfont
    An element $\alpha \in K$ is said to be \emph{optimal} if either $v(\alpha)>0$ or $v(\alpha) = v_{\alpha}$.
\end{defn}
By Remark \ref{rmk:optimal}, for any $\alpha \in K$, there is an optimal element in $\alpha + \mathcal{P}(K)$.
\begin{lemma}\label{lem:valram}
    Let $L/K$ be an  Artin-Schreier extension generated by $x$ satisfying
    \begin{align}\label{eqn:temp5}
        x^p - x = \alpha
    \end{align}
 where $\alpha$ is optimal. Then $L/K$ is 
    \begin{enumerate}[(i)]
  \item wildly ramified iff $v(\alpha) <0, (v(\alpha),p)=1$
    \item ferocious iff $v(\alpha)<0, (v(\alpha),p) \neq 1$
    \item unramified iff $v(\alpha) =0$
      \item split iff $v(\alpha) >0$
    \end{enumerate}
\end{lemma}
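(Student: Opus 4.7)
The plan is to analyze $x^p - x = \alpha$ according to whether $v(\alpha)$ is positive, zero, or negative (and in the negative case, according to $p$-divisibility). These four cases exhaust all possibilities, and the four ramification types (wild, ferocious, unramified, split) are mutually exclusive for a nontrivial degree-$p$ Galois extension, so it suffices to prove only the forward implication in each case. Throughout, optimality of $\alpha$ is used precisely to prevent the extension from degenerating to a simpler type.

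For (iv), when $v(\alpha) > 0$: the polynomial $f(X) = X^p - X - \alpha$ has $f'(X) = -1 \in \mathcal{O}_K^{\times}$, so $\mathrm{disc}(f)$ is a unit; hence the integral closure of $\mathcal{O}_K$ in $L$ is $B = \mathcal{O}_K[x]$. Then $B/\mathfrak{m}_K B \cong k[X]/(X^p - X) \cong \prod_{i=0}^{p-1} k$ by the Chinese Remainder Theorem, so $\mathfrak{m}_K B$ decomposes as a product of $p$ distinct primes of $B$ each with trivial residue extension, giving $L/K$ split. For (iii), when $v(\alpha) = 0$: optimality forces $\overline{\alpha} \notin \mathcal{P}(k)$, so $X^p - X - \overline{\alpha}$ is irreducible over $k$ and produces a separable residue extension of degree $p$; since $[L:K]=p$, this forces $L/K$ unramified.

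For $v(\alpha) = -s < 0$ with $\gcd(s,p) = 1$ (case (i)): $x$ is forced to have negative valuation, and $v(x^p) = v(\alpha)$ yields $v(x) = -s/p$. By B\'ezout, $\Gamma_L \supseteq \mathbb{Z} + (s/p)\mathbb{Z} = (1/p)\mathbb{Z}$; the inequality $[\Gamma_L : \Gamma_K] \leq p$ then forces equality, so $L/K$ is totally (wildly) ramified. For (ii), write $v(\alpha) = -pt$ with $t \geq 1$, fix a uniformizer $\pi$ of $K$, and set $y = \pi^t x$. Multiplying the defining equation by $\pi^{pt}$ gives
\[ y^p = \pi^{pt}\alpha + \pi^{(p-1)t} y, \]
where $u := \pi^{pt}\alpha \in \mathcal{O}_K^{\times}$ and the second summand lies in $\mathfrak{m}_L$. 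Passing to residues yields $\overline{y}^{\,p} = \overline{u}$.

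The main obstacle is then to show $\overline{u} \notin k^p$, and this is exactly where optimality of $\alpha$ is essential. If, for contradiction, $\overline{u} = \overline{v}^{\,p}$ for some lift $v \in \mathcal{O}_K^{\times}$, a direct computation gives
\[ \alpha - \mathcal{P}(v/\pi^t) = (u - v^p)/\pi^{pt} + v/\pi^t, \]
and each summand has valuation strictly greater than $-pt = v(\alpha)$ (using $v(u - v^p) \geq 1$ and $p \geq 2$), contradicting $v_{\alpha} = v(\alpha)$. Hence $k(\overline{u}^{1/p}) \subseteq l$ is a purely inseparable extension of degree $p$, and degree comparison with $[L:K]=p$ forces $L/K$ ferociously ramified. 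The remaining computations in all four cases are routine valuation arithmetic.
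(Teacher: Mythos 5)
Your proof is correct, and in cases (i)--(iii) it follows essentially the same route as the paper: the same valuation computation for (i), the same substitution $y=\pi^{t}x$ and the same optimality-based contradiction showing $\overline{u}\notin k^{p}$ for (ii) (the paper normalizes $v(\alpha)=-mp^{r}$ with $(m,p)=1$ rather than $-pt$, which is only cosmetic), and the same residue argument for (iii). Where you genuinely diverge is case (iv): the paper passes to the completion, invokes Cohen's structure theorem to identify $\hat{K}\simeq k((t))$, observes $\alpha=\mathcal{P}(\alpha+\alpha^{p}+\alpha^{p^{2}}+\cdots)\in\mathcal{P}(\hat{K})$, and deduces splitness from $L\otimes_{K}\hat{K}\simeq\oplus\hat{K}$; you instead note that $f(X)=X^{p}-X-\alpha$ has unit discriminant (as $f'=-1$), so the integral closure is $\mathcal{O}_{K}[x]$ and $B/\mathfrak{m}_{K}B\cong k[X]/(X^{p}-X)\cong\prod_{i=0}^{p-1}k$, which exhibits the $p$ distinct primes directly. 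Your version is more elementary: it avoids completion and Cohen's theorem altogether and verifies the paper's definition of split on the nose, whereas the paper's version is quicker to state if one takes the Hensel/Cohen machinery for granted and makes visible the underlying reason ($\alpha$ becomes an Artin--Schreier coboundary over $\hat{K}$). Your explicit observation that the four valuation cases are exhaustive and the four ramification types mutually exclusive (via $\sum e_{i}f_{i}\leq p$), so that only the forward implications need proof, is left implicit in the paper and is a worthwhile clarification.
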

\begin{proof}
 Suppose $v(\alpha) <0, (v(\alpha),p)=1$. Then (\ref{eqn:temp5}) implies that $v(x) \in \frac{1}{p} \mathbb{Z} - \mathbb{Z}$ which makes $L/K$ wildly ramified.\\
\indent Suppose $v(\alpha) <0, (v(\alpha),p) \neq 1$. Let $t$ be a uniformizer of $\mathcal{O}_K$. Write $\alpha = u/t^{mp^r}, u \in \mathcal{O}_K^{\times}, r>0, (m,p)=1$.   Let $y= t^{mp^{r-1}}x$. Then (\ref{eqn:temp5}) implies 
\begin{align}\label{eqn:temp6}
    y^p - t^{mp^r - mp^{r-1}} y = u
\end{align}
Suppose $\overline{u} \in k^p$, then $u = c^p + m$ for some  $c\in \mathcal{O}_K, m\in\mathfrak{m}_K$ and $\alpha \in  c/t^{mp^{r-1}} + m/t^{mp^r} + \mathcal{P}(K)$ contradicting that $v(\alpha) <0$ and $\alpha$ is optimal. Therefore $\overline{u} \notin k^p$.
Taking residues of the equation (\ref{eqn:temp6}), we get $\overline{y}  = {\overline{u}}^{1/p}$. Therefore $L/K$ is ferocious. \\
\indent Suppose  $v(\alpha) =0$. Suppose $\overline{\alpha} = \overline{b}^p - \overline{b} \in \mathcal{P}(k)$ for some $b\in \mathcal{O}_K$. Then the element $\alpha  + b -b^p  \in \mathfrak{m}_K$ has valuation strictly greater than $v(\alpha)$ contradicting  the assumption that $v(\alpha) =0$ and  $\alpha$ is optimal. Therefore $\overline{\alpha} \notin P(k)$. Now taking residues of the equation (\ref{eqn:temp5}), we see that $l/k$ is an Artin-Schreier (hence separable) extension. Therefore we conclude that $L/K$ is unramified.
\\
\indent Suppose $v(\alpha) >0 $. Let $\hat{K}$ denote the completion of $K$. By Cohen's structure theorem (\cite[Theorem 15]{cohen}), $\hat{K} \simeq k((t))$ is the field of Laurent series over $k$ and $\alpha$ can be identified with an element of the form $\sum_{i \geq 1}a_it^i$ for some $a_i \in k$. Now $(\alpha + \alpha^p  + \alpha^{p^2} \cdots ) \in \hat{K}$ and we have 
    \begin{align*}
        \alpha = (\alpha + \alpha^p  + \alpha^{p^2} \cdots )- (\alpha + \alpha^p  +\alpha^{p^2} \cdots )^p  \in \mathcal{P}(\hat{K})
    \end{align*}
    Therefore $L \otimes_K \hat{K} \simeq  \oplus \hat {K}$ (one can also directly show this via Hensel's lemma) and hence $L/K$ is split (\cite[Chapter II, Proposition 8.3]{neukirch}).\\

\end{proof}

  \begin{lemma} \label{lem:comram} Let $L/K$ be a  finite Artin-Schreier-Witt  extension that is completely ramified. Then every  Artin-Schreier extension $M/L$ such that $M/K$ is cyclic is completely ramified. 
    \end{lemma}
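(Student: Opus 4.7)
The plan is to invoke Albert's Theorem (Theorem \ref{thm:albert}) on the tower $K \subset L \subset M$ and then extract a contradiction to the alternatives in Lemma \ref{lem:valram} using the trace vanishing on integers supplied by Lemma \ref{lem:trace}. Set $[L:K]=p^n$, so $M/K$ is cyclic of degree $p^{n+1}$, and let $\sigma$ generate $\mathrm{Gal}(L/K)$. By the converse direction of Theorem \ref{thm:albert}, the extension $M/L$ is of the form $M=L[x]/(x^p-x-\alpha)$ for some Albert element $\alpha\in L$, i.e.\ there is $\beta\in L$ with $Tr_{L/K}(\beta)=1$ and $\mathcal{P}(\beta)=\sigma(\alpha)-\alpha$.

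The key step will be to arrange $\alpha$ to be optimal in the sense of the preceding definitions. Given any $\gamma\in L$, a direct computation shows that $\alpha-\mathcal{P}(\gamma)$ is again an Albert element for $L/K$, with new trace-$1$ element $\beta-\sigma(\gamma)+\gamma$ (the trace of a coboundary $\sigma(\gamma)-\gamma$ vanishes); moreover $M=L[x-\gamma]/((x-\gamma)^p-(x-\gamma)-(\alpha-\mathcal{P}(\gamma)))$. Hence Remark \ref{rmk:optimal} allows me to replace $\alpha$ by an optimal representative in $\alpha+\mathcal{P}(L)$, still an Albert element.

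Now I would exploit Lemma \ref{lem:trace} applied to the completely ramified extension $L/K$: $Tr_{L/K}(\mathcal{O}_L)\subseteq\mathfrak{m}_K$. Since $Tr_{L/K}(\beta)=1\notin\mathfrak{m}_K$, necessarily $\beta\notin\mathcal{O}_L$, so $v(\beta)<0$ and therefore $v(\mathcal{P}(\beta))=pv(\beta)<0$. Because $L/K$ is completely ramified, Remark \ref{rmk:dvr} tells us $v$ is the unique extension of the valuation on $K$, hence is $\sigma$-invariant, giving $v(\sigma(\alpha))=v(\alpha)$. The ultrametric inequality then yields
\begin{equation*}
v(\alpha)\;\leq\;v(\sigma(\alpha)-\alpha)\;=\;v(\mathcal{P}(\beta))\;<\;0.
\end{equation*}
With $\alpha$ optimal and $v(\alpha)<0$, Lemma \ref{lem:valram} rules out the unramified and split cases and forces $M/L$ to be either wildly or ferociously ramified, i.e.\ completely ramified.

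I expect the only mildly delicate point to be the optimality reduction, since without it Lemma \ref{lem:valram} cannot be applied; but this reduction is essentially formal, using that $\mathcal{P}$ and $Tr_{L/K}$ interact well with the coboundary operator $\sigma-1$. Everything else is a short chase combining Albert's theorem, Lemma \ref{lem:trace}, and the $\sigma$-invariance of the valuation on a completely ramified extension.
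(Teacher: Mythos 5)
Your proposal is correct and follows essentially the same route as the paper: Albert's theorem for the tower, the coboundary trick to replace $\alpha$ by an optimal representative while preserving $Tr_{L/K}(\beta)=1$, Lemma \ref{lem:trace} to force $\beta \notin \mathcal{O}_L$, and Lemma \ref{lem:valram} to pin down the ramification type. The only cosmetic difference is that you argue directly (deducing $v(\alpha)\leq v(\sigma(\alpha)-\alpha)=pv(\beta)<0$ via $\sigma$-invariance of the valuation), whereas the paper assumes $M/L$ split or unramified and contradicts $Tr_{L/K}(\beta)=1$; the two are logically equivalent and use the same ingredients.
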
 
    \begin{proof}
    To show that $M/L$ is completely ramified, it suffices to show that  $M/L$ is not split  or unramified. First note that by Remark \ref{rmk:dvr}, the integral closure of $\mathcal{O}_K$ in $L$ is a discrete valuation ring.    Suppose $M/L$ is split or unramified. Let $\sigma \in Gal(L/K)$ be a generator.  By  Theorem \ref{thm:albert},   $M/L$  is generated by $x$ satisfying
        \begin{align*}
            x^p - x = \alpha
        \end{align*}
        for some $\alpha \in L$ such that 
        \begin{align}\label{eqn:beta}
            \sigma(\alpha) - \alpha = \beta^p - \beta
        \end{align}
        for some $\beta \in L/K$ with $Tr_{L/K}(\beta) = 1$. Note that we may replace $\alpha$ by $\alpha' = \alpha + \gamma^p - \gamma$ and $\beta$ by $\beta' = \beta + \sigma(\gamma) - \gamma$ so that $Tr_{L/K}(\beta') = 1$. So  we may assume that $\alpha$ is optimal. Since $M/L$ is split or unramified, by Lemma \ref{lem:valram} we have $\alpha \in \mathcal{O}_L$.    But (\ref{eqn:beta}) implies that $\beta \in \mathcal{O}_L$. By Lemma \ref{lem:trace}, this contradicts the fact that $Tr_{L/K}(\beta) = 1$.
\end{proof}

\begin{lemma}\label{lem:splitunram}
    Let $L/K$ be a  finite discrete valued Artin-Schreier-Witt  extension that is unramified (resp. split). Let $M/L$ be 
 an Artin-Schreier extension that  is  split (resp. unramified w.r.t to a chosen discrete valuation on $L$ extending the one on $K$). Then $M/K$ is not cyclic.
\end{lemma}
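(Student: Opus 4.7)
The plan is to imitate the proof of Lemma~\ref{lem:comram} in two symmetric cases, in each assuming for contradiction that $M/K$ is cyclic and applying Theorem~\ref{thm:albert} (Albert) to obtain $M = L(x)$ with $x^p - x = \alpha \in L$ together with $\beta \in L$ satisfying $\sigma(\alpha) - \alpha = \mathcal{P}(\beta)$ and $Tr_{L/K}(\beta) = 1$, where $\sigma$ generates $Gal(L/K)$. Replacing $\alpha$ by $\alpha + \mathcal{P}(\gamma)$ and $\beta$ by $\beta + \sigma(\gamma) - \gamma$ preserves both relations, so I may further assume $\alpha$ is optimal at the relevant prime of $L$.

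\emph{Case (i)} ($L/K$ unramified, $M/L$ split): since $L/K$ is unramified, $\mathcal{O}_L$ is a DVR (Remark~\ref{rmk:dvr}) and $\sigma$ fixes its maximal ideal. Lemma~\ref{lem:valram}(iv) gives $\alpha \in \mathfrak{m}_L$, so $\mathcal{P}(\beta) \in \mathfrak{m}_L$; a valuation argument on $\beta^p - \beta$ rules out $v(\beta) < 0$, hence $\beta \in \mathcal{O}_L$, and $\overline{\beta}^p = \overline{\beta}$ forces $\overline{\beta} \in \mathbb{F}_p$. Taking residues of $Tr_{L/K}(\beta) = 1$ yields $Tr_{l/k}(\overline{\beta}) = 1$; but $\overline{\beta}$ is fixed by the full degree-$p^n$ Galois group of $l/k$, so this sum equals $p^n \overline{\beta} = 0$ in $k$---the desired contradiction.

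\emph{Case (ii)} ($L/K$ split, $M/L$ unramified w.r.t.\ the chosen valuation $w$): now $\sigma$ cyclically permutes the $p^n$ primes $w = w_0, w_1, \ldots, w_{p^n-1}$ of $L$ above $v$, so the direct $\sigma$-analysis of $\mathfrak{m}_w$ from Case~(i) is unavailable. I would pass to completions: the split hypothesis yields $L \otimes_K \hat{K} \cong \hat{K}^{p^n}$ under which $\sigma$ acts by cyclic shift of the factors and $Tr_{L/K}$ becomes componentwise summation. Writing $\alpha, \beta$ as tuples $(\alpha^{(i)})$, $(\beta^{(i)})$, the Albert relation becomes the system $\alpha^{(i-1)} - \alpha^{(i)} = \mathcal{P}(\beta^{(i)})$, and Lemma~\ref{lem:valram}(iii) combined with optimality at $w_0$ forces $\alpha^{(0)}$ to be a unit of $\hat{K}$ with residue outside $\mathcal{P}(k)$. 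Telescoping the componentwise Albert relations shows that all $\alpha^{(i)}$ represent the same nontrivial unramified class in $\hat{K}/\mathcal{P}(\hat{K})$, and integrality arguments mirroring Case~(i) should force each $\beta^{(i)} \in \mathcal{O}_{\hat{K}}$ with $\overline{\beta^{(i)}} \in \mathbb{F}_p$.

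The main obstacle will be closing Case~(ii): extracting a contradiction from $\sum_i \beta^{(i)} = 1$ given only that each $\overline{\beta^{(i)}} \in \mathbb{F}_p$. A naive residue reduction yields only $\sum_i \overline{\beta^{(i)}} = 1$ in $k$, which is satisfiable. The genuine obstruction must come from the rigidity that $(\beta^{(0)}, \ldots, \beta^{(p^n-1)})$ is the image of a single element of $L$ rather than an arbitrary tuple---likely extracted by matching the local Frobenius at $w_0$ (which generates $Gal(\hat{M}_{w'_0}/\hat{K}) \cong Gal(M/L)$ since $M/L$ is unramified at $w$) against $\sigma$'s cyclic shift on components, thereby forcing a coincidence in $Gal(M/K)$ that is incompatible with its being cyclic of order $p^{n+1}$.
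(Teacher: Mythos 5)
Your Case (i) ($L/K$ unramified, $M/L$ split) is correct and is essentially the paper's argument for that half: optimality plus Lemma \ref{lem:valram} gives $\alpha\in\mathfrak{m}_L$, hence $\beta\in\mathcal{O}_L$ with $\overline{\beta}\in\mathbb{F}_p$, and then $Tr_{L/K}(\beta)\in c\,[L:K]+\mathfrak{m}_L=\mathfrak{m}_L$ contradicts $Tr_{L/K}(\beta)=1$.

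Case (ii) ($L/K$ split, $M/L$ unramified) has a genuine gap, which you yourself flag: after passing to $L\otimes_K\hat{K}\cong\hat{K}^{p^n}$ you only reach ``each $\overline{\beta^{(i)}}\in\mathbb{F}_p$ and $\sum_i\beta^{(i)}=1$,'' which is not a contradiction, and the closing move you gesture at (matching a ``local Frobenius'' at $w_0$ against the cyclic shift) is not an argument --- in particular there is no canonical Frobenius here since the residue fields are of characteristic $p$ and possibly imperfect, and the decomposition-group bookkeeping you would get ($D\subseteq Gal(M/L)$ of order $p$) is by itself compatible with $M/K$ being cyclic. The Albert/trace machinery is the wrong tool in this case precisely because $\sigma$ permutes the $p^n$ primes of $L$, so ``optimal at $w_0$'' is not $\sigma$-stable. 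The paper's proof of this case abandons that machinery entirely and is much more elementary: since $L/K$ is split, the residue field at the chosen prime is $l=k$, and since $M/L$ is unramified of degree $p$, its residue extension $m/k$ is a (separable, cyclic) Artin--Schreier extension of degree $p$; lifting $m/k$ inside $M$ gives a cyclic degree-$p$ subextension $M'/K$ that is unramified with residue $m$. If $M/K$ were cyclic of degree $p^{n+1}$, its subfields form a chain, so its unique degree-$p$ subextension --- which must be $M'$ --- would be contained in $L$; but then $m\subseteq l=k$, contradicting $[m:k]=p$. This residue-lift-plus-chain argument is the missing idea, and your completion/telescoping setup does not substitute for it.
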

\begin{proof}
    Suppose $L/K$ is split and equipped with  a chosen discrete valuation on $L$ extending the one on $K$. Note that $l=k$.  Suppose $M$ is unramified over $L$ with residue $m$. Then  $M/K$ contains a lift $M'/K$ of $m/k$. But $[m:k] =p$. Therefore $M'/K$ is cyclic of degree $p$ contained in the  extension $M/K$. If $M/K$ is cyclic, $M' \subseteq L$ which contradicts the fact that $L/K$ is split. Therefore, $M/K$ is not cyclic.\\
    \indent Now assume that  $L/K$ is unramified and $M/L$ is split.  By Remark \ref{rmk:dvr}, the integral closure of $\mathcal{O}_K$ in $L$ is a discrete valuation ring. Suppose  $M/K$ is cyclic. Let $\sigma \in Gal(L/K)$ be a generator. Then by Theorem \ref{thm:albert}, $M/L$ is generated by $x$ satisfying
    \begin{align*}
        x^p - x = \alpha
    \end{align*}
    where $\sigma(\alpha) - \alpha = \beta^p -\beta$ for some $\beta \in L$ with $Tr_{M/L}(\beta) =1$. As before, by replacing $\alpha$ by an element in the coset $\alpha +\mathcal{P}(L)$, we may assume that $\alpha$ is optimal. Hence,  by Lemma \ref{lem:valram},  $\alpha \in \mathfrak{m}_L$ and therefore   $\sigma(\alpha) - \alpha \in \mathfrak{m}_L$. This implies that $\beta \in \mathcal{O}_L$ and its residue $\overline{\beta} = c$, for some $c \in \mathbb{F}_p$.  Therefore $\beta \in c + \mathfrak{m}_L$ and 
    \begin{align*}
        Tr_{L/K}(\beta) \in  c[L:K] +  \mathfrak{m}_L = \mathfrak{m}_L
    \end{align*}
    This contradicts the fact that $Tr_{L/K}(\beta) =1$. 
\end{proof}

\begin{proof}[Proof of Theorem \ref{thm:structure}] 
  This follows from Lemma \ref{lem:comram} and Lemma \ref{lem:splitunram}.
\end{proof}

\section{Auxiliary Lemmas} \label{sec:auxi}
\noindent In this section $F$ denotes a discrete valued field of characteristic $p$ with valuation $v$. Recall the binary relations $\prec$, $\preceq$ and $\approx$ from Definition \ref{defn:valcompare}.
\begin{lemma}\label{lem:li}
    Let $\phi \in F$ with $\phi\prec 1$. Then for any given  finite set $\{b,d_i,e_i |1\leq i \leq n\} \subseteq F$ and integers  $m_i \geq 1, s\geq 0$, there exists   $l \in \mathbb{Z}$ such that for every $i$
    \begin{align}\label{eqn:choosingl}
        \phi^{p^{l-s}}b \prec \phi^{p^{l-s-m_i}}d_i + e_i 
    \end{align}
\end{lemma}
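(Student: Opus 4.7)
The plan is an elementary archimedean argument: choose $l$ large enough that the valuation of the left-hand side beats every term on the right. Since $\phi \prec 1$, we may write $v(\phi) = -c$ for some positive integer $c$. Assuming $b \neq 0$ (otherwise the inequality is not even well-posed), we have
\[
v(\phi^{p^{l-s}}b) \;=\; -c\,p^{l-s} + v(b),
\]
and for each $i$,
\[
v(\phi^{p^{l-s-m_i}}d_i) \;=\; -c\,p^{l-s-m_i} + v(d_i) \qquad (\text{when } d_i \neq 0).
\]
The essential point is that as $l \to \infty$ the former tends to $-\infty$ at rate proportional to $p^{l-s}$, while the latter tends to $-\infty$ only at rate $p^{l-s-m_i}\le p^{l-s-1}$; the gap is a multiplicative factor of at least $p^{m_i}$ coming from $m_i \ge 1$.

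Given this, the bulk of the work is just turning the asymptotic statement into an explicit bound. I would first restrict to $l \ge s + \max_i m_i$ so that all the exponents $p^{l-s-m_i}$ are honest positive integer powers. Then for each $i$ I would require the two conditions
\[
c\,p^{l-s-m_i}(p^{m_i}-1) \;>\; v(b)-v(d_i) \quad\text{and}\quad c\,p^{l-s} \;>\; v(b)-v(e_i),
\]
where any condition whose right-hand side involves $v(0)=\infty$ is automatic and may be ignored. Each of these is a lower bound on $l$; since the set $\{b,d_i,e_i\}$ is finite, the finitely many lower bounds can be satisfied simultaneously by taking $l$ sufficiently large.

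Finally, I would conclude by combining: for such $l$, the strict inequality $v(\phi^{p^{l-s}}b) < v(\phi^{p^{l-s-m_i}}d_i)$ and $v(\phi^{p^{l-s}}b) < v(e_i)$ together give $v(\phi^{p^{l-s}}b) < \min\bigl(v(\phi^{p^{l-s-m_i}}d_i),v(e_i)\bigr) \le v(\phi^{p^{l-s-m_i}}d_i + e_i)$, which is exactly the desired $\phi^{p^{l-s}}b \prec \phi^{p^{l-s-m_i}}d_i + e_i$. There is no real obstacle here; the only care needed is the bookkeeping around degenerate cases ($d_i=0$ or $e_i=0$) and ensuring integrality of the exponent, both handled by taking $l$ sufficiently large.
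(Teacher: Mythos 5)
Your proof is correct and follows essentially the same route as the paper's: reduce each inequality to an explicit valuation bound such as $p^{l-s-m_i}(p^{m_i}-1)\,|v(\phi)| > v(b)-v(d_i)$ and take $l$ large enough to satisfy the finitely many resulting constraints. The only cosmetic difference is that you beat both terms $\phi^{p^{l-s-m_i}}d_i$ and $e_i$ separately and invoke $v(x+y)\geq \min(v(x),v(y))$, whereas the paper arranges for the $d_i$ term to dominate $e_i$ and splits the cases $d_i=0$, $d_i\neq 0$; both handle the degenerate cases equally well.
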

\begin{proof}
       For each $i$ there are two possibilities, namely $d_i =0$  and $d_i \neq 0$. Suppose $d_i = 0$. In this case, clearly there exists $l_i$ such that $\phi^{p^{l_i-s}} b\prec e_i$ since $\phi \prec 1$.  Suppose $d_i \neq 0$. Then again  since $\phi \prec 1$, one can choose $n_i \in \mathbb{Z}$ such that $\phi^{p^{n_i-s-m_i}}d_i \prec e_i$.  Let $z$ be a variable.  Note that $\phi^{p^{z-s}}b \prec  \phi^{p^{z-s-m_i}} d_i $  holds if and only if 
    \begin{align*}
        p^{z-s} v(\phi) + v(b) <(p^{z-s-m_i})v(\phi) + v(d_i)\\
        \iff p^{z-s-m_i}(p^{m_i}-1) v(\phi) < v(d_i) -v(b)
    \end{align*}
    Since the right hand side of the above inequality is fixed and $\phi \prec 1$ there exists   $l_i  
    \geq n_i$ such that $z = l_i$ satisfies the above inequality.   It is easy to see that $l= \max\{l_i\}$ satisfies (\ref{eqn:choosingl}).
    
\end{proof}

\begin{lemma}\label{lem:simple}
    Let $j$ be a non-negative integer and let  $F^{alg}$ denote the algebraic closure of $F$. Given $a,b \in F^{alg}$, suppose the following holds in $F^{alg}[X,Z]$:  
  \begin{align*}
      X^p  - X  =  aZ^{p^j} + b  
  \end{align*}
 Then  for any $m \in \mathbb{N}$,
  \begin{align*}
      W =  X - \sum_{k=0}^{j-1} (a^{1/p^{j-k}})Z^{p^k} -\sum_{i=1}^{m} b^{1/p^i}
  \end{align*}
  satisfies 
  \begin{align*}
  W^p - W = a^{1/p^j}Z+b^{1/p^m}    
  \end{align*}
\end{lemma}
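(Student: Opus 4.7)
The plan is to prove this by a direct computation, exploiting the fact that in characteristic $p$ the Frobenius is additive, so $(X - S_1 - S_2)^p - (X - S_1 - S_2) = (X^p - X) - (S_1^p - S_1) - (S_2^p - S_2)$, where $S_1 := \sum_{k=0}^{j-1} a^{1/p^{j-k}} Z^{p^k}$ and $S_2 := \sum_{i=1}^m b^{1/p^i}$. Substituting the hypothesis $X^p - X = aZ^{p^j} + b$, the problem reduces to identifying $(S_1^p - S_1)$ and $(S_2^p - S_2)$ as telescoping sums.

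First I would compute $S_1^p$ by raising each summand to the $p$-th power individually (using the additivity of Frobenius), which produces $S_1^p = \sum_{k=0}^{j-1} a^{1/p^{j-k-1}} Z^{p^{k+1}}$. Reindexing $k' = k+1$ gives $S_1^p = \sum_{k=1}^{j} a^{1/p^{j-k}} Z^{p^k}$, so the difference $S_1^p - S_1$ telescopes, with every middle term cancelling against its counterpart in $S_1$, leaving exactly $aZ^{p^j} - a^{1/p^j} Z$. The same idea applied to $S_2$ yields $S_2^p = \sum_{i=0}^{m-1} b^{1/p^i} = b + \sum_{i=1}^{m-1} b^{1/p^i}$, so $S_2^p - S_2 = b - b^{1/p^m}$.

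Putting these together,
\begin{align*}
W^p - W &= (aZ^{p^j} + b) - (aZ^{p^j} - a^{1/p^j}Z) - (b - b^{1/p^m}) \\
&= a^{1/p^j} Z + b^{1/p^m},
\end{align*}
which is precisely the desired identity.

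There is no conceptual obstacle here; the only thing to be careful about is the index bookkeeping in the two telescoping computations, and the use of additivity of Frobenius in characteristic $p$ (which is what allows $(S_1 + S_2)^p = S_1^p + S_2^p$ and the termwise raising of each sum to the $p$-th power). Since both identities live in the polynomial ring $F^{\mathrm{alg}}[X,Z]$ and the manipulations are purely formal, no analytic or field-theoretic input beyond characteristic $p$ is required.
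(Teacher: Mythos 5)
Your computation is correct and is exactly the routine verification the paper has in mind (the paper simply declares the lemma "clear"): additivity of Frobenius plus the two telescoping identities $S_1^p - S_1 = aZ^{p^j} - a^{1/p^j}Z$ and $S_2^p - S_2 = b - b^{1/p^m}$, with the index bookkeeping handled properly, including the degenerate case $j=0$ where $S_1$ is empty. Nothing further is needed.
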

\begin{proof}
    This  is clear.
\end{proof}
Recall the definition of $\{m,n\}$-special polynomials  from Definition \ref{defn:mnspecial}.
\begin{lemma}\label{lem:simplify2}
Let $\alpha_1,\alpha_2 \in F$ and let $m > n$ be positive integers.   Suppose we are given integers  $l > m,N > m$ and $f(Z) \in F^{\{m, n\}}[Z]$. Assume that $X$ satisfies
\begin{align*}
    X^p - X = \frac{1}{t^{p^l}}(f(Z)^p -f(Z) - \alpha_1^{p^N}) + \alpha_2^{p^N}
\end{align*} 
Then there exists $g(Z) \in F^{\{m-n-1, n\}}[Z]$ such that $Y := X - g(Z)$ satisfies
\begin{align} \label{eqn:temp8}
    Y^p - Y = h Z + \eta
\end{align}
 for some $h \in F^{p^{m-n}}, \eta \in F^{p^{m-n-1}}$. Moreover, when  $l$ is large enough, $h \prec \{1, \eta\}$.
\end{lemma}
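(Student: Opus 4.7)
My plan is a two-stage application of the substitution from Lemma \ref{lem:simple}. First I would cancel every higher-degree monomial $Z^{p^j}$ ($1 \le j \le n+1$) in the RHS; then I would iteratively shift the constant part to reduce its valuation.

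The first stage would start by writing $f(Z) = c + \sum_{i=0}^n a_i Z^{p^i}$ with $c, a_i \in F^{p^m}$ and expanding
\[
R(Z) := \frac{f(Z)^p - f(Z) - \alpha_1^{p^N}}{t^{p^l}} + \alpha_2^{p^N} = \sum_{j=1}^{n+1} C_j Z^{p^j} + B_0 Z + K_0,
\]
where $C_{n+1} = a_n^p/t^{p^l}$, $C_j = (a_{j-1}^p - a_j)/t^{p^l}$ for $1 \le j \le n$, $B_0 = -a_0/t^{p^l}$, and $K_0 = (c^p - c - \alpha_1^{p^N})/t^{p^l} + \alpha_2^{p^N}$. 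Using $a_i = b_i^{p^m}$, $c = c_0^{p^m}$, $m \ge n+1$, and $l > m$, each $C_j$ is a sufficiently high $p$-power to make $C_j^{1/p^{j-k}} \in F$. Setting $g_j(Z) := \sum_{k=0}^{j-1} C_j^{1/p^{j-k}} Z^{p^k}$, Lemma \ref{lem:simple} yields $g_j^p - g_j = C_j Z^{p^j} - C_j^{1/p^j} Z$.

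For the second stage, I would exploit that in characteristic $p$, $K_0$ is itself a $p^{n+1}$-th power in $F$: writing $T_i := (c_0^p - c_0)^{p^{m-i}} - \alpha_1^{p^{N-i}}$, one checks inductively that $K_0^{1/p^i} = T_i/t^{p^{l-i}} + \alpha_2^{p^{N-i}}$ for $i = 1, \ldots, n+1$ (the constraints $N > m \ge n+1$ and $l > m$ are exactly what is needed). Setting
\[
g(Z) := \sum_{j=1}^{n+1} g_j(Z) + \sum_{i=1}^{n+1} K_0^{1/p^i},
\]
the second sum is the telescoping constant-reduction part of Lemma \ref{lem:simple}. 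Combined with the first stage, $Y := X - g(Z)$ will satisfy $Y^p - Y = hZ + \eta$ with $h = B_0 + \sum_{j=1}^{n+1} C_j^{1/p^j}$ and $\eta = K_0^{1/p^{n+1}}$.

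Verifying the memberships $g \in F^{\{m-n-1, n\}}[Z]$, $h \in F^{p^{m-n}}$, and $\eta \in F^{p^{m-n-1}}$ reduces to $p$-exponent bookkeeping: the explicit formula $C_j^{1/p^{j-k}} = (b_{j-1}^p - b_j)^{p^{m-j+k}}/t^{p^{l-j+k}}$ (and the $b_n$-analogue when $j = n+1$) places each coefficient of $g$ in $F^{p^{m-n-1}}$, with the $k=0$ case even in $F^{p^{m-n}}$; while each $K_0^{1/p^i}$ lies in $F^{p^{m-n-1}}$ using $T_i \in F^{p^{m-i}}$ together with $N - i \ge m - n - 1$. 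For the valuation claim $h \prec \{1, \eta\}$, let $j^*$ be the smallest index with $C_{j^*} \ne 0$ (necessarily $j^* \le n$ since $f$ is non-constant). Then $v(h)$ is dominated at scale $-p^l$ by $B_0$ (when $a_0 \ne 0$) or otherwise at scale $-p^{l-j^*}$, giving $h \prec 1$ for large $l$; meanwhile $v(\eta) \ge v(T_{n+1}) - p^{l-n-1}$, so the difference $v(\eta) - v(h)$ grows at least as $p^{l-n-1}(p^{n+1-j^*} - 1) \to +\infty$ with $l$, yielding $h \prec \eta$. The main obstacle will be the $p$-exponent bookkeeping in Stage 2, ensuring that each iterated constant $K_0^{1/p^i}$ lands in $F^{p^{m-n-1}}$ and does not break the $\{m-n-1, n\}$-special structure of $g$.
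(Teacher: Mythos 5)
Your proof is correct and is essentially the paper's own argument made explicit: you carry out the recursive application of Lemma \ref{lem:simple} monomial by monomial (cancelling each $C_jZ^{p^j}$ and telescoping the constant $K_0$ down to $K_0^{1/p^{n+1}}$) to reach $Y^p-Y=hZ+\eta$ with the same membership bookkeeping, and then verify $h\prec\{1,\eta\}$ for $l$ large by a direct valuation estimate rather than by citing Lemma \ref{lem:li}. The only blemish is the parenthetical ``necessarily $j^*\le n$'', which can fail (e.g.\ $f(Z)=\sum_{i=0}^{n}a_0^{p^i}Z^{p^i}$ has $C_j=0$ for $1\le j\le n$ but $C_{n+1}\neq 0$), though harmlessly so, since in that case $a_0\neq 0$ and your $B_0$ branch already gives $v(h)\approx v(a_0)-p^{l}v(t)$, so the conclusion stands.
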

\begin{proof}
    Write
    \begin{align*}
        f(Z) = c + \sum_{j=s}^n b_jZ^{p^j}
    \end{align*}
where $c, b_j \in F^{p^m}, b_s \neq 0$ for some $s\geq 0$. Note that $f(Z)^p \in F^{\{m+1, n+1\}}[Z]$. By recursive application of Lemma \ref{lem:simple}, we conclude that that there exists $g(Z) \in F^{\{m-n-1, n\}}[Z]$ such that $Y = X- g(Z)$ satisfies
\begin{align*}
     Y^p - Y = (b_s^{p^{-s}}/t^{p^{l-s}}  + \sum_{j=s+1}^{n+1} d_j/t^{p^{l-j}}) Z + (1/t^{p^{l-s-1}}) (c^{p^{-s}} -c^{p^{-s-1}} - \alpha_1^{p^{N-s-1}}) +\alpha_2^{p^N}
\end{align*}
where $d_j \in F^{p^{m-n}}$  is a constant depending only on  $b_s, \cdots, b_n$. Let $h=(b_s^{p^{-s}}/t^{p^{l-s}}  + \sum_{j=s+1}^{n+1}d_j/t^{p^{l-j}})$ and $\eta = (1/t^{p^{l-s-1}}) (c^{p^{-s}} -c^{p^{-s-1}} - \alpha_1^{p^{N-s-1}}) +\alpha_2^{p^N}$ so that  (\ref{eqn:temp8}) holds. By Lemma \ref{lem:li}, there exists large enough $l$ for which $b_s^{p^{-s}}/t^{p^{l-s}} \prec \{ 1, \eta, d_j/t^{p^{l-j}}|s+1\leq j \leq n+1  \}$. Therefore $h \approx b_s^{p^{-s}}/t^{p^{l-s}}$ and   $h \prec \{1, \eta\}$ as required.
\end{proof}

\section*{Acknowledgements}
The author acknowledges the support of the DAE, Government of India, under Project Identification No. RTI4001. She thanks Najmuddin Fakhruddin for corrections and many helpful suggestions. 
\nocite*{}
\bibliographystyle{alpha}
\bibliography{ref}

\begin{thebibliography}{{Sta}24}

\bibitem[Alb34]{albert_cyclic}
A.~A. Albert.
\newblock Cyclic fields of degree {$p^n$} over {$F$} of characteristic {$p$}.
\newblock {\em Bull. Amer. Math. Soc.}, 40(8):625--631, 1934.

\bibitem[Coh46]{cohen}
I.~S. Cohen.
\newblock On the structure and ideal theory of complete local rings.
\newblock {\em Trans. Amer. Math. Soc.}, 59:54--106, 1946.

\bibitem[FJ08]{fried_fa}
Michael~D. Fried and Moshe Jarden.
\newblock {\em Field arithmetic}, volume~11 of {\em Ergebnisse der Mathematik und ihrer Grenzgebiete. 3. Folge. A Series of Modern Surveys in Mathematics [Results in Mathematics and Related Areas. 3rd Series. A Series of Modern Surveys in Mathematics]}.
\newblock Springer-Verlag, Berlin, third edition, 2008.
\newblock Revised by Jarden.

\bibitem[Hyo87]{hyodo}
Osamu Hyodo.
\newblock Wild ramification in the imperfect residue field case.
\newblock In {\em Galois representations and arithmetic algebraic geometry ({K}yoto, 1985/{T}okyo, 1986)}, volume~12 of {\em Adv. Stud. Pure Math.}, pages 287--314. North-Holland, Amsterdam, 1987.

\bibitem[Jac96]{jacobson}
Nathan Jacobson.
\newblock {\em Finite-dimensional division algebras over fields}.
\newblock Springer-Verlag, Berlin, 1996.

\bibitem[Kur87]{kurihara}
Masato Kurihara.
\newblock On two types of complete discrete valuation fields.
\newblock {\em Compositio Math.}, 63(2):237--257, 1987.

\bibitem[Lan02]{lang_algebra}
Serge Lang.
\newblock {\em Algebra}, volume 211 of {\em Graduate Texts in Mathematics}.
\newblock Springer-Verlag, New York, third edition, 2002.

\bibitem[Mik74]{miki}
Hiroo Miki.
\newblock On {$Z\sb{p}$}-extensions of complete {$p$}-adic power series fields and function fields.
\newblock {\em J. Fac. Sci. Univ. Tokyo Sect. IA Math.}, 21:377--393, 1974.

\bibitem[Neu99]{neukirch}
J\"{u}rgen Neukirch.
\newblock {\em Algebraic number theory}, volume 322 of {\em Grundlehren der mathematischen Wissenschaften [Fundamental Principles of Mathematical Sciences]}.
\newblock Springer-Verlag, Berlin, 1999.
\newblock Translated from the 1992 German original and with a note by Norbert Schappacher, With a foreword by G. Harder.

\bibitem[Ser79]{serre_local}
Jean-Pierre Serre.
\newblock {\em Local fields}, volume~67 of {\em Graduate Texts in Mathematics}.
\newblock Springer-Verlag, New York-Berlin, 1979.
\newblock Translated from the French by Marvin Jay Greenberg.

\bibitem[{Sta}24]{stacks-project}
The {Stacks project authors}.
\newblock The stacks project.
\newblock \url{https://stacks.math.columbia.edu}, 2024.

\bibitem[Swe68]{sweedler}
Moss~Eisenberg Sweedler.
\newblock Structure of inseparable extensions.
\newblock {\em Ann. of Math. (2)}, 87:401--410, 1968.

\bibitem[Tho05]{lara}
Lara Thomas.
\newblock Ramification groups in {A}rtin-{S}chreier-{W}itt extensions.
\newblock {\em J. Th\'eor. Nombres Bordeaux}, 17(2):689--720, 2005.

\bibitem[XZ14]{ramification_survey}
L.~Xiao and I.~Zhukov.
\newblock Ramification of higher local fields, approaches and questions.
\newblock {\em Algebra i Analiz}, 26(5):1--63, 2014.

\end{thebibliography}

\end{document}